\documentclass[a4paper,12pt]{amsart}
\usepackage{amsmath}
\usepackage{amssymb}
\usepackage{enumerate}
\usepackage{amscd}
\usepackage{graphics}
\usepackage{latexsym}
\usepackage{verbatim} 
\usepackage{url}

\setlength{\topmargin}{0.4cm}
\setlength{\oddsidemargin}{1.25cm}
\setlength{\evensidemargin}{1.25cm}
\setlength{\textwidth}{13.5cm}
\setlength{\textheight}{21cm}
\setlength{\footnotesep}{0.5cm}
\setlength{\footskip}{1.8cm}

\pagestyle{plain}
\theoremstyle{plain}
\newtheorem{thm}{{\bf Theorem}}[section]
\newtheorem{cor}[thm]{{\bf  Corollary}}
\newtheorem{prop}[thm]{{\bf Proposition}}
\newtheorem{lemma}[thm]{{\bf Lemma}}
\newtheorem{fact}[thm]{{\bf Fact}}
\newtheorem{claim}[thm]{{\bf Claim}}

\theoremstyle{definition}
\newtheorem{define}[thm]{{\bf Definition}}

\newtheorem{question}[thm]{{\bf Question}}
\newtheorem{remark}[thm]{{\bf Remark}}

\newcommand{\dom}{\mathord{\mathrm{dom}}}
\newcommand{\size}[1]{\left\vert {#1} \right\vert}
\newcommand{\p}{\mathcal{P}}

\newcommand{\seq}[1]{\langle {#1} \rangle}

\newcommand\Fine{\mathop{\mathrm{Fine}}}

\newcommand{\ka}{\kappa}
\newcommand{\la}{\lambda}
\newcommand{\om}{\omega}

\newcommand{\pkl}{\mathcal{P}_\kappa \lambda}

\newcommand{\bbA}{\mathbb{A}}
\newcommand{\bbC}{\mathbb{C}}

\newcommand{\bbR}{\mathbb{R}}

\newcommand{\calA}{\mathcal{A}}

\newcommand{\calF}{\mathcal{F}}

\newcommand{\calO}{\mathcal{O}}

\newcommand{\calV}{\mathcal{V}}
\newcommand{\calW}{\mathcal{W}}

\title[]{$G_\delta$-topology and compact cardinals}
\author[T. Usuba]{Toshimichi Usuba}
\address[T. Usuba]
{Faculty of Science and Engineering,
Waseda University, 
Okubo 3-4-1, Shinjyuku, Tokyo, 169-8555 Japan}
\email{usuba@waseda.jp}
\keywords{cardinal function, $G_\delta$-topology, Lindel\"of degree, $\om_1$-strongly compact cardinal}
\subjclass[2010]{Primary 03E55, 54A25}

\begin{document}
\maketitle

\begin{abstract}
For a topological space $X$, let $X_\delta$ be the space $X$ with  $G_\delta$-topology of $X$.
For an uncountable cardinal $\ka$,
we prove that the following are equivalent:
(1) $\ka$ is $\om_1$-strongly compact. (2)
For every compact Hausdorff space $X$, the Lindel\"of degree of $X_\delta$ is $\le \ka$.
(3) For every compact Hausdorff space $X$, the weak Lindel\"of degree of $X_\delta$ is $\le \ka$.
This shows that the least $\om_1$-strongly compact cardinal is the supremum of 
the Lindel\"of and the weak Lindel\"of degrees of compact Hausdorff spaces with $G_\delta$-topology.
We also prove the least measurable cardinal
is the supremum of the extents of compact Hausdorff spaces with $G_\delta$-topology.

For the square of a Lindel\"of space, 
using weak $G_\delta$-topology,
we prove  that the following are consistent:
(1) the least $\om_1$-strongly compact cardinal is
the supremum of the (weak) Lindel\"of degrees of
the squares of regular $T_1$ Lindel\"of spaces.
(2) The least measurable cardinal is the supremum of
the extents of the squares of regular $T_1$ Lindel\"of spaces.
\end{abstract}

\section{Introduction}
For a topological space $X$,
let $X_\delta$ be the space $X$ with $G_\delta$-topology of $X$,
that is, the topology generated by all $G_\delta$-subsets of $X$.
$X_\delta$ is also called the \emph{$G_\delta$-modification} of $X$.
The Lindel\"of degree of $X$, $L(X)$, is the minimal cardinal $\ka$ such that every open cover of $X$ has a subcover of size $\le \ka$.
A space $X$ is \emph{Lindel\"of} if $L(X)=\om$,
that is, every open cover of $X$ has a countable subcover.
The weak Lindel\"of degree, $wL(X)$, is the minimal cardinal $\ka$ such that every open cover of $X$ has a subfamily 
of size $\le \ka$ which has dense union in $X$.

In 1970's, Arhangel'skii asked the following question:
\begin{question}
Let $X$ be a compact Hausdorff space.
\begin{enumerate}
\item Is $L(X_\delta) \le 2^{\aleph_0}$?
\item Is $wL(X_\delta) \le 2^{\aleph_0}$?
\end{enumerate}
\end{question}
See Spadaro-Szeptycki \cite{SS} for the background on this question,
and in \cite{T} Tall also asked a similar question.
The question (1) was solved in negative sometimes.
For instance, if $\ka$ has no $\om_1$-complete uniform ultrafilter (e.g., strictly less than the least measurable cardinal), Gorelic \cite{G} proved that $L(\om^{2^\ka}) \ge\ka$.
For such a $\ka$, since the space $(\om^{2^\ka})_\delta$ is a closed subspace of $((\om+1)^{2^\ka})_\delta$,
we have $\ka \le L(\om^{2^\ka}) \le L((\om^{2^\ka})_\delta) \le L(((\om+1)^{2^\ka})_\delta)$.
On the other hand, 
recently Spadaro and Szeptycki \cite{SS} solved the question (2):
They constructed a compact Hausdorff space
$X$ with $wL(X_\delta)>2^{\aleph_0}$,
so an answer to the question (2) is also negative.
In \cite{SS}, however, they were not able to get a compact space $X$ with $wL(X_\delta)>(2^{\aleph_0})^+$, and they asked the following question:
\begin{question}
Is there any bound on the weak Lindel\"of degree of the $G_\delta$-topology on a compact space?
\end{question}

In this paper we generalize Spadaro and Szeptycki's result by showing
that the weak Lindel\"of degree of the $G_\delta$-topology on a compact space can be  much greater than $(2^{\aleph_0})^+$, 
and moreover we prove that 
some class of large cardinals is the supremum
of the Lindel\"of and the weak Lindel\"of degrees of compact Hausdorff spaces under the $G_\delta$-topology. These are answers to Spadaro and Szeptycki's question.

A key of our proofs is a concept of an \emph{$\om_1$-strongly compact cardinal}.
\begin{define}[Bagaria-Magidor \cite{BM1, BM2}]
Let $\ka$ be uncountable cardinal.
$\ka$ is \emph{$\omega_1$-strongly compact}
if for every set $A$, every $\ka$-complete filter over $A$
can be extended to an $\omega_1$-complete ultrafilter.
\end{define}
For $\omega_1$-strongly compact cardinals, the followings are known, see \cite{BM1, BM2}:
\begin{enumerate}
\item Every strongly compact cardinal is $\omega_1$-strongly compact.
\item It is consistent that 
the least $\om_1$-strongly compact cardinal is singular.
\item If $\ka$ is $\om_1$-strongly compact then there is a measurable cardinal $\le \ka$.
\item It is consistent that the least measurable cardinal  is $\om_1$-strongly compact.
\item It is also known that every cardinal greater than an $\omega_1$-strongly compact cardinal
is $\omega_1$-strongly compact.
\end{enumerate}
Bagaria and Magidor \cite{BM2} showed that
an uncountable cardinal $\ka$ is $\om_1$-strongly compact if and only if
for every open cover $\calO$ of the product space of Lindel\"of spaces,
$\calO$ has a subcover of size $<\ka$.
Hence the least $\om_1$-strongly compact cardinal is just the supremum of
the Lindel\"of degrees of the products of Lindel\"of spaces,
that is, the following equation holds\footnote{
This equation also means that if there is no $\om_1$-strongly compact cardinal,
then $L(\prod_{i \in I} X_i)$ can be arbitrary large.}:
\begin{align*}
\text{the least $\om_1$-strongly compact} & =  \sup \{L(\prod_{ i \in I} X_i) \mid \text{$X_i\, (i \in I)$ is Lindel\"of }\}.
\end{align*}

The following is one of  main results of this paper,
which shows that the least $\om_1$-strongly compact cardinal
is the supremum of both the Lindel\"of and
the weak Lindel\"of degrees of compact spaces with $G_\delta$-topology.

\begin{thm}\label{thm2}
Let $\ka$ be an uncountable cardinal. Then the following are equivalent:
\begin{enumerate}
\item $\ka$ is an $\om_1$-strongly compact cardinal.
\item $L(X_\delta) \le \ka$ for every compact Hausdorff space $X$.
\item $wL(X_\delta) \le \ka$ for every compact Hausdorff space $X$.
\end{enumerate}
\end{thm}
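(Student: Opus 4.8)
The plan is to prove the cycle $(1)\Rightarrow(2)\Rightarrow(3)\Rightarrow(1)$; the two substantial implications are $(1)\Rightarrow(2)$, handled by a direct ultrafilter-convergence argument, and $(3)\Rightarrow(1)$, handled by contraposition through a space construction.

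For $(1)\Rightarrow(2)$, let $X$ be compact Hausdorff and let $\calO=\{U_\alpha:\alpha\in I\}$ be an open cover of $X_\delta$. Since the $G_\delta$-subsets of $X$ are closed under finite intersection, they form a base for $X_\delta$, so by replacing $\calO$ with the family of all $G_\delta$-sets refining it I may assume each $U_\alpha$ is a $G_\delta$-set, say $U_\alpha=\bigcap_{n<\om}V_{\alpha,n}$ with $V_{\alpha,n}$ open in $X$. Arguing by contradiction, suppose no subfamily of size $\le\ka$ covers $X$. Then for every $J\in[I]^{\le\ka}$ the set $K_J=X\setminus\bigcup_{\alpha\in J}U_\alpha$ is nonempty, and since $K_J\cap K_{J'}=K_{J\cup J'}$, the family $\{K_J:J\in[I]^{\le\ka}\}$ is a base for a proper $\ka$-complete filter $\calF$ on the set $X$. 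As $\ka$ is $\om_1$-strongly compact, $\calF$ extends to an $\om_1$-complete ultrafilter $\calW$ on $X$. Because $X$ is compact, $\calW$ converges to some $x^{*}\in X$, i.e.\ every open neighborhood of $x^{*}$ lies in $\calW$. Fix $\beta$ with $x^{*}\in U_\beta$; then $V_{\beta,n}\in\calW$ for every $n$, so by $\om_1$-completeness $U_\beta=\bigcap_{n}V_{\beta,n}\in\calW$, contradicting $K_{\{\beta\}}=X\setminus U_\beta\in\calF\subseteq\calW$. Hence $L(X_\delta)\le\ka$. The implication $(2)\Rightarrow(3)$ is immediate from $wL(Y)\le L(Y)$ for every space $Y$.

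For $(3)\Rightarrow(1)$ I argue by contraposition: assuming $\ka$ is not $\om_1$-strongly compact, I aim to produce a compact Hausdorff $X$ with $wL(X_\delta)>\ka$. By definition there are a set $S$ and a $\ka$-complete filter $\calF$ on $S$ with no $\om_1$-complete ultrafilter extension. A natural candidate for $X$ is the closed (hence compact Hausdorff) subspace $X=\{\calU\in\beta S:\calF\subseteq\calU\}$ of the Stone space $\beta S$, with clopen sets $\widehat A=\{\calU\in X:A\in\calU\}$; the cruder Gorelic-type examples $(\om+1)^\lambda$ from the introduction suggest that one may have to inflate $S$ to size $2^{\ka}$ to push the degree strictly above $\ka$. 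Every $\calU\in X$ fails $\om_1$-completeness, so there is a partition $\seq{A^{\calU}_n:n<\om}$ of $S$ with each $A^{\calU}_n\notin\calU$; setting $G_\calU=\bigcap_{n}\widehat{S\setminus A^{\calU}_n}=\{\calV\in X:\forall n\,(A^{\calU}_n\notin\calV)\}$ gives a $G_\delta$-set containing $\calU$, so $\{G_\calU:\calU\in X\}$ covers $X_\delta$. It then remains to show that for every $Z\in[X]^{\le\ka}$ the union $\bigcup_{\calU\in Z}G_\calU$ fails to be dense, i.e.\ that some nonempty $G_\delta$-set of $X$ is disjoint from every $G_\calU$ with $\calU\in Z$.

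This last step is the main obstacle, and is exactly the point where the present work must go beyond the forward direction and generalize Spadaro--Szeptycki. The difficulty is a cardinality mismatch: disjointness from all $G_\calU$ $(\calU\in Z)$ imposes $\size{Z}\le\ka$ many constraints, whereas a nonempty basic open set of $X_\delta$ is a $G_\delta$-set and can carry only countably many. Resolving this should force a genuine use of both hypotheses on $\calF$: its $\ka$-completeness, to amalgamate the $\ka$ many partitions $\seq{A^{\calU}_n:n}$, and the absence of an $\om_1$-complete extension, to locate a surviving point; the goal is a diagonalization compressing these constraints into a single countable family of sets that, for each $\calU\in Z$, forces membership of some $A^{\calU}_n$. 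I expect essentially all the technical weight of the theorem to sit in this construction, with the switch from the Lindel\"of to the weak Lindel\"of degree being what makes it markedly harder than the corresponding Gorelic-style bound for $L$.
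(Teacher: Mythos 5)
Your proof of $(1)\Rightarrow(2)$ is correct, and it is in fact a different and somewhat more direct argument than the paper's: you extend the $\ka$-complete filter generated by the complements $K_J$ to an $\om_1$-complete ultrafilter and use ultrafilter convergence in the compact space $X$, whereas the paper routes through an $\om_1$-complete fine ultrafilter over $\p_\ka\la$ and works for all Lindel\"of spaces. The implication $(2)\Rightarrow(3)$ is indeed trivial. The genuine gap is in $(3)\Rightarrow(1)$: after setting up the Stone space $X=\{\calU\in\beta S:\calF\subseteq\calU\}$ and the $G_\delta$-cover $\{G_\calU:\calU\in X\}$, you explicitly leave unproved the assertion that no subfamily of size $\le\ka$ has dense union, calling it ``the main obstacle'' and only describing what a proof ought to accomplish. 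That assertion \emph{is} the hard direction of the theorem, so the proposal is incomplete precisely where the weight lies; worse, nothing in your setup rules out that for your particular space a small family of these $G_\delta$-sets \emph{is} dense in $X_\delta$, so it is not even clear the statement you need is true in the generality you chose (an arbitrary witnessing pair $(S,\calF)$).

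For comparison, the paper never proves a density-failure statement directly; it resolves exactly the ``countably many constraints versus $\ka$ many constraints'' mismatch you identified by two moves you do not have. First, it proves $L(X_\delta)=wL(\bbA(X)_\delta)$ for the Alexandroff duplicate $\bbA(X)$, which is compact Hausdorff whenever $X$ is; since the points of $X\times\{0\}$ are isolated in $\bbA(X)_\delta$, a subfamily with dense union must actually cover them, and this converts hypothesis (3) into the covering hypothesis (2). Second, for the covering statement it replaces your arbitrary filter by a structured one: it works in the space $\Fine(\p_{\ka^+}\la)$ of \emph{fine} ultrafilters, and shows no family of $\le\ka$ proper $G_\delta$-sets can cover it by an elementary submodel argument (take $M\prec H(\theta)$ of size $\le\ka$ containing the indices, set $a=M\cap\la$, choose $n_\alpha$ with $a\in A^\alpha_{n_\alpha}$, and use elementarity together with fineness to verify that $\{A^\alpha_{n_\alpha}\mid\alpha\}\cup\{\{x\in\p_{\ka^+}\la\mid\beta\in x\}\mid\beta<\la\}$ has the finite intersection property, yielding a fine ultrafilter missed by the cover). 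Fineness is essential to that argument, and by Bagaria--Magidor's characterization the failure of $\om_1$-strong compactness can always be witnessed by such a fine filter, so no generality is lost. Finally, there is a bookkeeping step you would also need: this argument refutes covers of size strictly below the completeness degree, so the paper runs it at $\ka^+$ (legitimate because the least $\om_1$-strongly compact cardinal is a limit cardinal, so $\ka^+$ is not $\om_1$-strongly compact either) in order to obtain failure at size $\le\ka$. Without the duplicate reduction and the fine-ultrafilter space, the diagonalization you hope for has no evident substitute.
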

Thus we have:
\begin{align*}
\text{the least $\om_1$-strongly compact} & =  \sup \{L(X_\delta) \mid \text{$X$ is compact Hausdorff}\} \\
 &= \sup\{wL(X_\delta) \mid \text{$X$ is compact Hausdorff} \}.
\end{align*}

We also consider the extent.
Recall that the extent of $X$, $e(X)$, is $\sup\{\size{C} \mid C \subseteq X$ is closed and discrete$\}$.
The extent is smaller than the Lindel\"of degree,
so the extent is another generalization of the Lindel\"of degree.

For the extent of the $G_\delta$-topology, we prove that the least measurable cardinal is
the supremum of the extents of 
compact spaces with $G_\delta$-topology,
this contrasts with Theorem \ref{thm2}.
\begin{thm}\label{thm6}
For every uncountable cardinal $\ka$,
$\ka$ is the least measurable cardinal if and only if
$\ka$ is the least cardinal such that $e(X_\delta) \le \ka$
for every compact Hausdorff space $X$.
\end{thm}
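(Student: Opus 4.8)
The plan is to identify the least measurable cardinal $\mu$ with the least cardinal carrying a nonprincipal $\om_1$-complete (countably complete) ultrafilter, and then to translate the condition ``$C$ is closed and discrete in $X_\delta$'' into a statement about such ultrafilters. The decisive tool is the \emph{$u$-limit}: if $X$ is compact Hausdorff and $u$ is an ultrafilter on an index set $\la$, then for any family $\seq{c_\alpha : \alpha < \la}$ of points of $X$ there is a unique point $p = \lim_u c_\alpha \in X$ such that $\{\alpha : c_\alpha \in V\} \in u$ for every neighbourhood $V$ of $p$.

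First I would prove the key lemma: if $C = \{c_\alpha : \alpha < \la\}$ is discrete in $X_\delta$ and $u$ is a nonprincipal $\om_1$-complete ultrafilter on $\la$, then $p = \lim_u c_\alpha$ witnesses that $C$ is \emph{not} closed in $X_\delta$. Indeed, a basic $G_\delta$-neighbourhood of $p$ has the form $V = \bigcap_n V_n$ with each $V_n$ open in $X$ and $p \in V_n$; since each $\{\alpha : c_\alpha \in V_n\} \in u$, $\om_1$-completeness gives $\{\alpha : c_\alpha \in V\} = \bigcap_n \{\alpha : c_\alpha \in V_n\} \in u$, which is therefore nonempty, so $V$ meets $C$. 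Hence $p$ lies in the $G_\delta$-closure of $C$. If we had $p = c_\beta \in C$, discreteness would supply a $G_\delta$-set $U$ with $U \cap C = \{c_\beta\}$, and then $\{\alpha : c_\alpha \in U\} = \{\beta\} \in u$ would force $u$ to be principal; so $p \notin C$ and $C$ fails to be $G_\delta$-closed. Consequently, whenever $C$ is closed and discrete in $X_\delta$ there is no nonprincipal $\om_1$-complete ultrafilter on $\size{C}$. Since $\la \ge \mu$ would produce one by pushing a $\mu$-complete ultrafilter on $\mu$ forward along the inclusion $\mu \hookrightarrow \la$, this yields $\size{C} < \mu$, and therefore $e(X_\delta) \le \mu$ for every compact Hausdorff $X$.

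For the sharpness of $\mu$ I would use the Stone--\v{C}ech compactifications $\beta\la$ of the discrete cardinals $\la < \mu$. The set $\la$ of principal ultrafilters is discrete in $\beta\la$, hence in $(\beta\la)_\delta$, and a direct computation identifies its $G_\delta$-closure with $\la$ together with the nonprincipal $\om_1$-complete ultrafilters on $\la$: a free ultrafilter $v$ lies in the closure exactly when $\bigcap_n A_n \ne \emptyset$ for all $A_n \in v$, and for an ultrafilter this is equivalent to countable completeness (otherwise a countable partition of $\la$ into $v$-null pieces yields members of $v$ with empty intersection). As $\la < \mu$ rules these out, $\la$ is closed and discrete in $(\beta\la)_\delta$, so $e((\beta\la)_\delta) \ge \la$. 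Because $\mu$ is inaccessible, hence a limit cardinal, every $\ka < \mu$ admits such a $\la$ with $\ka < \la < \mu$, so no $\ka < \mu$ bounds all the extents. Combined with the previous paragraph, this shows that $\mu$ is exactly the least cardinal with $e(X_\delta) \le \mu$ for all compact Hausdorff $X$.

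The main obstacle, and the step I would treat most carefully, is the passage from countably complete ultrafilters to $G_\delta$-limits: the entire argument hinges on $\om_1$-completeness being precisely what is needed to push the trace of a single open set through a countable intersection, matched against the standard fact that the least measurable cardinal is the least cardinal carrying a nonprincipal $\om_1$-complete ultrafilter. I would also be sure to record that $\mu$ is a limit cardinal, since the ``least'' clause of the theorem relies on there being cardinals strictly between any $\ka < \mu$ and $\mu$.
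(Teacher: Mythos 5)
Your proposal is correct, but its first half runs on a different engine than the paper's. For the lower bound you do exactly what the paper does: the paper's Lemma 5.6 takes $\beta\ka$ for $\ka$ below the least measurable and shows that the set of principal ultrafilters is closed and discrete in $(\beta\ka)_\delta$, using the same observation you make (a nonprincipal, non-$\om_1$-complete ultrafilter admits a countable partition into null pieces, whose induced $G_\delta$-set misses the principal ultrafilters); the paper merely packages this as a ``proper $G_\delta$-cover'' of the remainder, and it, too, invokes that the least measurable is a limit cardinal to pass from ``bounds'' to ``least bound.'' For the upper bound, however, the paper's Lemma 2.5 does not use $u$-limits at all: it proves the stronger statement that if $\ka$ is measurable and $X$ is merely \emph{Lindel\"of} (no separation axioms), then every subset of $X_\delta$ of size $\ka$ has a complete accumulation point. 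Its mechanism is a covering argument: given a putative counterexample $Y$, each $x$ has a $G_\delta$-set $Z^x=\bigcap_n O^x_n$ with $\size{Z^x\cap Y}<\ka$; $\ka$-completeness plus $\om_1$-completeness of a measure on $\ka$ selects a single $O^x_{n_x}$ whose complement is large, Lindel\"ofness extracts a countable subcover from $\{O^x_{n_x}\mid x\in X\}$, and $\om_1$-completeness then produces a point of $Y$ outside the union, a contradiction. Your $u$-limit argument is shorter and quite clean, but it genuinely needs compactness (to guarantee the limit exists) and so only covers compact Hausdorff spaces; that is enough for the theorem as stated, but it would not yield the paper's Corollary 4.7, where ``compact Hausdorff'' can be replaced by ``Lindel\"of.'' Conversely, what your route buys is a sharper local picture in the compact case: the $u$-limit is an explicit point of the $G_\delta$-closure witnessing non-closedness, rather than a contradiction extracted from a cover.
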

Hence we have:
\begin{align*}
\text{the least measurable} =& \sup\{e(X_\delta) \mid \text{$X$ is compact Hausdorff}\}.
\end{align*}

Next we turn to the square of a Lindel\"of space.
It is known that the square of a Lindel\"of space need not be Lindel\"of;
the square of the Sorgenfrey line has  Lindel\"of degree $2^\om$.
However the following question is still open:
\begin{question}
How large is the Lindel\"of degree of the square of a Lindel\"of space?
\end{question}
By Bagaria and Magidor's theorem,
an $\om_1$-strongly compact is an upper bound on it.
For the lower bound,
by the forcing method, Shelah \cite{Sh} (see also Hajnal-Juh\'asz \cite{HJ}) constructed a Lindel\"of space
$X$ with $L(X^2) =(2^{\aleph_0})^+$,
and Gorelic \cite{G2} refined this result (see below).
We prove that, 
using a weak $G_\delta$-topology,
the Cohen forcing notion $\bbC$ creates a Lindel\"of space $X$ 
such that $L(X^2)$ is much greater than $(2^{\aleph_0})^+$.
Actually it forces
that the least $\om_1$-strongly compact cardinal
is the supremum of
the weak Lindel\"of degrees of the squares of Lindel\"of spaces.

\begin{thm}\label{thm4}
The Cohen forcing notion $\bbC$ forces the following:
For every uncountable cardinal $\ka$, $\ka$ is $\om_1$-strongly compact
if and only if $wL(X^2) \le \ka$ for every
regular $T_1$ Lindel\"of space $X$.
\end{thm}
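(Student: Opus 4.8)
The plan is to establish the two implications separately; the nontrivial one is ``$wL(X^2)\le\ka$ for all regular $T_1$ Lindel\"of $X$ $\Rightarrow$ $\ka$ is $\om_1$-strongly compact'', and only it uses the Cohen reals. For the forward implication I argue in $\mathrm{ZFC}$, so it holds in the extension $V[G]$ (with $G$ being $\bbC$-generic) as well: if $\ka$ is $\om_1$-strongly compact and $X$ is any Lindel\"of space, then $X^2$ is a product of two Lindel\"of spaces, so by the Bagaria--Magidor characterization recalled above every open cover of $X^2$ has a subcover of size $<\ka$; hence $L(X^2)\le\ka$ and a fortiori $wL(X^2)\le L(X^2)\le\ka$. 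This uses nothing about $\bbC$ and in fact gives slightly more than needed (all Lindel\"of $X$, not merely regular $T_1$ ones).

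For the converse I prove the contrapositive inside $V[G]$. Since Theorem \ref{thm2} is a theorem of $\mathrm{ZFC}$, it holds in $V[G]$, so if $\ka$ fails to be $\om_1$-strongly compact there is a compact Hausdorff space $K$ with $wL(K_\delta)>\ka$. The problem thus reduces to a Main Lemma, to be proved in $V[G]$: \emph{for every compact Hausdorff $K$ there is a regular $T_1$ Lindel\"of space $X$ together with a continuous surjection $q\colon X^2\twoheadrightarrow K_\delta$.} Granting this, the lemma completes the argument, because a continuous surjection $q\colon Z\twoheadrightarrow Y$ satisfies $wL(Y)\le wL(Z)$: given an open cover of $Y$, a subfamily of its $q$-preimages of size $\le\mu$ with dense union projects, since continuous surjections carry dense sets to dense sets, to a $\le\mu$-subfamily of the original cover with dense union. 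Applying this to $q\colon X^2\twoheadrightarrow K_\delta$ yields $wL(X^2)\ge wL(K_\delta)>\ka$, so $X$ witnesses the failure of the right-hand side.

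To build $X$ I would use the weak $G_\delta$-topology: starting from $K$, refine its topology by adjoining a Cohen-generic family $\calG$ of $G_\delta$-sets, chosen so that a single copy of the resulting space $X$ is Lindel\"of while the two coordinates of $X^2$ together recover \emph{all} $G_\delta$-sets of $K$, producing the surjection onto the full $G_\delta$-modification $K_\delta$. Observe that $X$ must have large density: since $wL(X^2)\le d(X^2)=d(X)$, any space witnessing $wL(X^2)>\ka$ has $d(X)>\ka$, so one is constructing a \emph{Lindel\"of} space of density $>\ka$, which is exactly where genericity becomes indispensable. The Cohen reals enter twice. First they supply $\calG$ and force $X$ to be Lindel\"of: a density argument over the Cohen poset shows that the generic object reduces every open cover of $X$ to a countable subcover, using the compactness of $K$. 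Second, because $K$ lives in $V[G]$, I factor $\bbC$ as a product so that, below a suitable condition, $K$ together with a cover of $K_\delta$ witnessing $wL(K_\delta)>\ka$ already appears in an intermediate model and the remaining Cohen coordinates are generic over it; the homogeneity and product decomposition of $\bbC$ make such generic reals available for every relevant $K$, and it is to guarantee this uniformly that $\bbC$ is taken to add sufficiently many Cohen reals.

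The main obstacle is the Main Lemma, and inside it the Lindel\"of property of $X$. Adjoining even countably many $G_\delta$-sets to a compact space need not preserve Lindel\"ofness, and since the target $X$ has density $>\ka$ no naive counting argument can succeed; the Lindel\"of property must be extracted from a genericity argument for $\bbC$, which is precisely why the Cohen forcing is needed and why the \emph{weak} rather than the full $G_\delta$-topology is used --- the full modification $K_\delta$ is the non-Lindel\"of object one must avoid in a single coordinate while still reconstructing it in the square. Once $\calG$ and the topology of $X$ are fixed, checking that $X$ is regular and $T_1$ (inherited from the compact Hausdorff $K$ through the refinement) and that $q\colon X^2\to K_\delta$ is continuous for the fine topology are routine verifications.
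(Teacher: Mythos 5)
Your forward direction is fine (and identical to the paper's), and your transfer principle---a continuous surjection $q\colon Z\twoheadrightarrow Y$ gives $wL(Y)\le wL(Z)$---is correct and would be a legitimate substitute for the paper's Alexandroff-duplicate step, which exists precisely because $wL$ is not monotone on closed subspaces. The first genuine gap is in how you set up the converse. You start from an arbitrary compact Hausdorff $K\in V[G]$ with $wL(K_\delta)>\ka$ (via Theorem \ref{thm2} in $V[G]$) and then try to recover genericity by factoring $\bbC$ so that $K$ lies in an intermediate model with the remaining Cohen coordinates generic over it. This fails: an arbitrary set of $V[G]$ need not belong to any proper intermediate extension---$K$ may code the entire generic real---so there is in general no factorization $V[G]=V[G_0][G_1]$ with $K\in V[G_0]$ and $G_1$ generic over $V[G_0]$, and homogeneity of $\bbC$ does not help. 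The paper avoids exactly this trap: by Lemma \ref{4.5}, the failure of $\om_1$-strong compactness of $\ka$ in $V[G]$ reflects back to $V$, so the witnessing object (a $\pkl$ carrying no $\om_1$-complete fine ultrafilter, hence $\Fine(\pkl)$ with a proper $G_\delta$-cover) is fixed \emph{in the ground model}, and the full Cohen real is then genuinely generic over all data used in the construction. Without this transfer step, your density arguments have nothing to be generic over.

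The second gap is that your Main Lemma asks for far too much: a continuous surjection from $X^2$, with $X$ Lindel\"of, onto the \emph{full} $G_\delta$-modification $K_\delta$. The paper never produces such a map, and nothing suggests one exists. In Proposition \ref{4.1} the two Lindel\"of refinements of the Stone topology on $\Fine(\pkl)$ adjoin only the sets coming from one fixed proper $G_\delta$-cover $\{Z_\alpha \mid \alpha<\mu\}$, split between the coordinates by the generic partition $\om=a\cup b$; the diagonal of $X_0\times X_1$ is then a closed copy of the \emph{weak} $G_\delta$-modification generated by that single cover, which suffices for the lower bound $L(X_0\times X_1)\ge\ka$ (via the ccc of $\bbC$ and Proposition \ref{2.5++}). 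Moreover the Lindel\"ofness proof (Claim \ref{4.3}) leans crucially on the cover data $\{A^\alpha_n \mid n<\om\}$ being a ground-model family of countable partitions: adjoining enough generic $G_\delta$-sets to reconstruct all of $K_\delta$ in the square would destroy both the Lindel\"ofness argument and the descent-to-$V$ argument. So both the statement of your key lemma and the genericity scheme offered to prove it need to be replaced by the paper's transfer-to-$V$ plus weak-modification construction.
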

So the Cohen forcing forces the following equation:
\begin{align*}
\text{the least $\om_1$-strongly compact} =& \sup\{L(X^2) \mid \text{$X$ is regular $T_1$ Lindel\"of}\}\\
 =& \sup\{wL(X^2) \mid \text{$X$ is regular $T_1$ Lindel\"of}\}.
\end{align*}

For the extent of the square of a Lindel\"of space,
by the forcing method, Gorelic \cite{G2}
constructed a Lindel\"of space whose square has extent $2^{\aleph_1}$,
and he conjectured that the extent of the square of a Lindel\"of space 
is always bounded by $2^{\aleph_1}$.

We prove that the least measurable cardinal
bounds the extent of the square of a Lindel\"of space.
Actually it bounds the extent of the product of Lindel\"of spaces.
\begin{thm}\label{thm7}
Let $\ka$ be the least measurable cardinal.
Then 
$e(\prod_{\xi<\la} X_\xi) \le \ka$ 
for every family $\{X_\xi \mid \xi<\la\}$ of Lindel\"of spaces.
\end{thm}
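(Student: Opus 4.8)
The plan is to reduce the statement to a single fact about ultrafilter limits in products of Lindel\"of spaces and then feed it a countably complete ultrafilter coming from the measurability of $\ka$. Write $X = \prod_{\xi<\la} X_\xi$. The crucial observation is that a closed discrete set has no accumulation point at all (not merely no \emph{complete} accumulation point), so to contradict closed discreteness it suffices to produce, for a suitable subset, a single point $p$ every neighbourhood of which meets the subset in infinitely many points. This is exactly what a \emph{nonprincipal} countably complete ultrafilter delivers, and it is the reason the bound is the least measurable rather than the (generally much larger) least $\om_1$-strongly compact cardinal needed for the full Lindel\"of degree.

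The main lemma I would isolate is the following: if $U$ is a countably complete ultrafilter on an index set $I$ and $f \colon I \to X$, then the $U$-limit of $f$ exists in $X$, i.e.\ there is $p \in X$ with $\{i \in I : f(i) \in V\} \in U$ for every open $V \ni p$. To prove it, I work coordinatewise. Fix $\xi<\la$ and set $f_\xi = \pi_\xi \circ f$, where $\pi_\xi$ is the projection onto the Lindel\"of factor $X_\xi$. If $f_\xi$ had no $U$-limit, then every $y \in X_\xi$ would have an open neighbourhood $V_y$ with $\{i : f_\xi(i) \in V_y\} \notin U$; Lindel\"ofness of $X_\xi$ extracts a countable subcover $\{V_{y_n} : n<\om\}$, and countable completeness of $U$ then forces $\bigcap_{n} \{i : f_\xi(i) \notin V_{y_n}\} \in U$. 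But this intersection is empty since the $V_{y_n}$ cover $X_\xi$, which is absurd. Hence each $f_\xi$ has a $U$-limit $p_\xi$, and since basic open sets of $X$ depend on finitely many coordinates, $p = \langle p_\xi : \xi<\la \rangle$ is a $U$-limit of $f$; here only finite intersections of $U$-sets are used, so no completeness beyond the coordinatewise step is needed.

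With the lemma in hand I would finish as follows. Since $\ka$ is measurable it carries a countably complete nonprincipal ultrafilter $W$. Suppose toward a contradiction that $X$ has a closed discrete set $C$ with $\size{C} \ge \ka$. A subset of a closed discrete set is again closed discrete, so I may fix $C' \subseteq C$ with $\size{C'} = \ka$ and enumerate it injectively as $C' = \{c_\alpha : \alpha<\ka\}$. Applying the lemma to the map $\alpha \mapsto c_\alpha$ and the ultrafilter $W$ produces a $W$-limit $p \in X$. For every open $V \ni p$ the set $\{\alpha : c_\alpha \in V\}$ lies in $W$, hence is infinite (a nonprincipal ultrafilter contains no finite set), so $V$ contains points $c_\alpha \neq p$. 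Thus $p$ is an accumulation point of $C'$: if $p \notin C'$ then $C'$ is not closed, and if $p = c_{\alpha_0} \in C'$ then $p$ is not isolated in $C'$, so $C'$ is not discrete. Either way this contradicts that $C'$ is closed discrete.

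Therefore every closed discrete subset of $X$ has size $<\ka$, and so $e(X) \le \ka$. I expect the main obstacle to be purely organisational: verifying the coordinatewise limit lemma cleanly (the one place where Lindel\"ofness and countable completeness interact) and being careful that no separation axiom is assumed, since the factors are only required to be Lindel\"of. The conceptual step that makes the argument give the sharp bound is the passage from complete accumulation points to ordinary accumulation points, which is what allows a mere nonprincipal countably complete ultrafilter on $\ka$ to suffice.
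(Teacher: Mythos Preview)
Your proof is correct and shares the paper's core idea: build a $U$-limit of the given points coordinatewise, using Lindel\"of together with countable completeness of a nonprincipal ultrafilter on $\ka$, and observe that this limit accumulates on the set. The only difference is packaging: the paper (Lemma~\ref{5.3+}) routes through the open-cover equivalence of Lemma~\ref{4.1+++} to obtain the stronger conclusion that every subset of size $\ka$ has a \emph{complete} accumulation point, whereas you note that an ordinary accumulation point already suffices to contradict closed discreteness, which makes your version slightly more direct.
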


For the lower bound of the extent of a square,
we prove the consistency that the extent of the square of a Lindel\"of space can be arbitrary large
up to the least measurable.
In fact the Cohen forcing forces that the least measurable is the supremum
of the extents of the squares of  Lindel\"of spaces.
This answers the Gorelic's conjecture in negative.
\begin{thm}\label{thm5}
The Cohen forcing notion $\bbC$ forces the following:
For every uncountable cardinal $\ka$, $\ka$ is the least measurable cardinal
if and only if $\ka$ is the least cardinal such that $e(X^2) \le \ka$ for every
regular $T_1$ Lindel\"of space $X$.
\end{thm}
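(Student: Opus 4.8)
The statement (Theorem \ref{thm5}) asserts that in the Cohen extension, the least measurable cardinal coincides with the least $\ka$ bounding the extent $e(X^2)$ over all regular $T_1$ Lindelöf spaces $X$. This naturally splits into two directions, and my plan is to reuse the machinery already established for the other Cohen-forcing theorem (Theorem \ref{thm4}) together with the absolute bound of Theorem \ref{thm7}.

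First I would establish the upper bound, i.e.\ that $e(X^2) \le \ka$ always holds when $\ka$ is the least measurable. This is the easier half and does not even require passing to the Cohen extension: it follows immediately from Theorem \ref{thm7}, since the square $X^2 = X \times X$ is a product of two (hence $<\la$-many) Lindelöf spaces, so $e(X^2)\le\ka$ whenever $X$ is Lindelöf. Thus for \emph{any} model, if $\ka$ is the least measurable then $e(X^2)\le\ka$ for every Lindelöf $X$, and in particular for every regular $T_1$ Lindelöf $X$. What remains for this direction is to check that no cardinal strictly below the least measurable can serve as a bound; this requires the lower-bound construction below to witness extents cofinal in the least measurable.

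The main work is the lower bound: in the Cohen extension I must produce, for unboundedly many $\ka$ below the least measurable, a regular $T_1$ Lindelöf space $X$ with $e(X^2)$ large (at least $\ka$). The plan is to adapt the Cohen-forcing construction used for Theorem \ref{thm4}. There, the forcing $\bbC$ creates a regular $T_1$ Lindelöf space $X$ whose square has large weak Lindelöf degree; the natural strategy is to show the \emph{same} (or a parallel) construction yields a closed discrete subset of $X^2$ of the required size, thereby bounding the extent from below. Concretely, I would identify a family of points in $X^2$ indexed so that, using the genericity of the Cohen reals and the combinatorics of where $\ka$ sits relative to measurability (specifically, the failure of $\om_1$-complete uniform ultrafilters below the least measurable, in the spirit of the Gorelic bound $L(\om^{2^\ka})\ge\ka$ cited in the introduction), this family is closed and discrete in the square topology. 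The extent being a lower bound for the Lindelöf degree, one expects the discrete sets extracted here to be governed by the same ultrafilter-theoretic obstructions that drive the extent version of Theorem \ref{thm7}.

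The hard part will be verifying that the relevant subset of $X^2$ is genuinely \emph{closed} and \emph{discrete} (not merely discrete), since closedness in the product of two copies of a forcing-generic space is delicate: one must control limit points across both coordinates simultaneously, and the Cohen-generic data must be shown to separate each candidate point from the rest by a basic open box. I expect this to reduce, via a density argument, to showing that the generic filter decides enough of the topology to realize a measurable-sized closed discrete set exactly when $\ka$ falls below the least measurable, and no larger—so that the supremum is pinned precisely at the least measurable rather than overshooting it. Establishing this tight calibration, and confirming that the construction refutes Gorelic's conjectured bound of $2^{\aleph_1}$, is the crux of the argument.
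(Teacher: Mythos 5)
Your overall skeleton matches the paper's: the upper bound is exactly the paper's appeal to Theorem \ref{thm7} (Lemma \ref{5.3+}), valid in $V[G]$ because it is a ZFC theorem, and the lower bound is to be a Cohen-forcing construction of a regular $T_1$ Lindel\"of space whose square has a large closed discrete subset. But the lower bound --- which you yourself identify as the crux --- is left as a hope, and the specific hope you express (reuse the construction from Theorem \ref{thm4}) would fail. The space used there is $\Fine(\pkl)$, whose governing hypothesis (``no $\om_1$-complete fine ultrafilter over $\pkl$'') is calibrated to $\om_1$-strong compactness, not measurability; moreover the witness in Proposition \ref{4.1} is an open cover of the diagonal admitting no small subcover, which gives large Lindel\"of degree but says nothing about extent. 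Indeed it cannot: the spaces $\Fine(D)^V_a$ and $\Fine(D)^V_b$ are Lindel\"of in $V[G]$, so by Theorem \ref{thm7} itself their product has extent at most the least measurable, no matter how large $\ka$ is. So no choice of a family of points inside that particular square can produce closed discrete sets of the sizes you need, and the ``tight calibration'' you hope for is impossible from that space.

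The missing idea is to change the space: the paper works with $\beta \ka$, the Stone--\v Cech compactification of the discrete space $\ka$, for each $\ka$ carrying no $\om_1$-complete non-principal ultrafilter in $V$ (equivalently, each $\ka$ below the least measurable; transferring this between $V$ and $V[G]$ uses preservation of measurability by small forcing, a step your write-up also omits). One fixes in $V$ a proper $G_\delta$-cover $\{Z_\alpha \mid \alpha<\mu\}$ of the remainder $\ka^*$, with associated countable partitions $\calA_\alpha$ of $\ka$, and in $V[G]$ equips the set $\beta\ka^V$ with the weak $G_\delta$-topologies determined by the complementary generic sets $a=\{n<\om \mid \bigcup G(n)=0\}$ and $b=\{n<\om \mid \bigcup G(n)=1\}$, yielding spaces $\beta\ka^V_a$ and $\beta\ka^V_b$, both Lindel\"of by the same genericity argument as Claim \ref{4.3}. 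The witness is the diagonal $\Delta=\{\seq{U_\xi,U_\xi} \mid \xi<\ka\}$ of \emph{principal} ultrafilters, which is discrete and, crucially, closed in $\beta\ka^V_a \times \beta\ka^V_b$: distinct ultrafilters are separated by a box determined by some $A$ and $\ka \setminus A$, while a point $\seq{U,U}$ with $U$ non-principal lies in $\tilde W^a_{\ka,\seq{\alpha}} \times \tilde W^b_{\ka,\seq{\alpha}}$ for any $\alpha$ with $U \in Z_\alpha$, and this box misses $\Delta$ because $a \cup b=\om$ and every principal ultrafilter contains exactly one member of the partition $\calA_\alpha$. Taking $X=\beta\ka^V_a \oplus \beta\ka^V_b$ gives a regular $T_1$ Lindel\"of space with $e(X^2)\ge\ka$, and applying this at $\ka_1^+$ (where $\ka_1$ is the putative smaller bound) yields the contradiction. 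It is precisely the presence of the size-$\ka$ set of principal ultrafilters --- which $\Fine(\pkl)$ lacks entirely, since no principal ultrafilter over $\pkl$ is fine --- that makes $\beta\ka$ the right space here.
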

Thus the Cohen forcing forces:
\begin{align*}
\text{the least measurable} =& \sup\{e(X^2) \mid \text{$X$ is regular $T_1$ Lindel\"of}\}.
\end{align*}

Here we present some basic set-theoretic definitions. 
For a regular uncountable cardinal $\theta$,
$H(\theta)$ is the set of all sets with hereditary cardinality $<\theta$.

For a set $A$, a \emph{filter over $A$} is 
a family $F \subseteq \p(A)$ satisfying the following:
\begin{enumerate}
\item $A \in F$, $\emptyset \notin F$.
\item $X, Y \in F \Rightarrow X \cap Y \in F$.
\item $X \in F$, $X \subseteq Y \subseteq A \Rightarrow Y \in F$.
\end{enumerate}
For a cardinal $\ka$ and a filter $F$ over the set $A$,
$F$ is \emph{$\ka$-complete} if for every family $\calF \subseteq F$ of size $<\ka$,
we have $\bigcap \calF \in F$.
A filter $F$ over the set $A$ is an \emph{ultrafilter} if
for every $X \subseteq A$, either $X \in F$ or $A \setminus X \in F$.
A filter $F$ over $A$ is \emph{principal} if
$\{x\} \in F$ for some $x \in A$. Every principal filter is an ultrafilter.
An uncountable cardinal $\ka$ is a \emph{measurable cardinal} if
there is a $\ka$-complete non-principal ultrafilter over $\ka$.
It is known that every measurable cardinal is regular, and 
if there is a non-principal $\om_1$-complete ultrafilter $U$ over some set $A$,
then there is a measurable cardinal $\le \size{A}$,
and
the completeness of $U$ is in fact greater than or equal to the least measurable.
In particular, if $\la$ is strictly less than the least measurable cardinal,
then there is no non-principal $\om_1$-complete ultrafilter over $\la$.
\section{$\om_1$-strongly compact cardinals and the Lindel\"of degree}
In  this section we prove Theorem \ref{thm2}.
We will use the following basic facts about $\om_1$-strongly compact cardinals.
\begin{define}
For a cardinal $\ka$ and a set $A$ of size $\ge \ka$,
let $\p_\ka A=\{a \subseteq A \mid \size{a}<\ka\}$.
A filter $F$ over $\p_\ka A$ is \emph{fine}
if $\{a \in \p_\ka A \mid x \in a\} \in F$ for every $x \in A$.
\end{define}
\begin{fact}[Bagaria-Magidor \cite{BM1, BM2}]\label{2.2++}
\begin{enumerate}
\item For uncountable cardinal $\ka$,
the following are equivalent:
\begin{enumerate}
\item $\ka$ is $\omega_1$-strongly compact.
\item For every set $A$ of size $\ge \ka$,
there exists an $\omega_1$-complete fine ultrafilter over $\p_\ka A$.
\item For every cardinal $\la \ge \ka$,
there exists an $\omega_1$-complete fine ultrafilter over $\p_\ka \lambda$.
\end{enumerate}
\item If $\ka$ is the least $\om_1$-strongly compact cardinal,
then $\ka$ is a limit cardinal.
\end{enumerate}
\end{fact}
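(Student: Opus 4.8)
The plan is to prove the equivalences in (1) through the standard dictionary between filter extensions and fine ultrafilters, and to handle (2) by reflecting $\om_1$-strong compactness below a putative successor. In (1) the implication (b)$\Rightarrow$(c) is immediate, as a cardinal $\la\ge\ka$ is itself a set of size $\ge\ka$. For (c)$\Rightarrow$(b), given $A$ with $\size{A}\ge\ka$ I fix a bijection $f\colon A\to\size{A}$; the induced map $a\mapsto f[a]$ is a bijection of $\p_\ka A$ onto $\p_\ka\size{A}$ preserving fineness, since $x\in a$ iff $f(x)\in f[a]$, so a fine $\om_1$-complete ultrafilter on $\p_\ka\size{A}$ transports back to one on $\p_\ka A$. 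The real content of (1) is therefore the equivalence of (a) and (b).

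For (b)$\Rightarrow$(a), let $F$ be a $\ka$-complete filter on a set $S$. If $\size{\p(S)}<\ka$ then $\size{F}<\ka$, so $\bigcap F\in F$ by completeness and any principal ultrafilter at a point of $\bigcap F$ works; so assume $\size{\p(S)}\ge\ka$ and fix, by (b), a fine $\om_1$-complete ultrafilter $U$ on $\p_\ka(\p(S))$. For each $p\in\p_\ka(\p(S))$ the family $p\cap F$ has size $<\ka$, so $\bigcap(p\cap F)\in F$ is nonempty, and I choose $x_p\in\bigcap(p\cap F)$. Setting $D=\{X\subseteq S:\{p:x_p\in X\}\in U\}$, the fact that $U$ is an ultrafilter and that $\{p:x_p\in X\}$, $\{p:x_p\in S\setminus X\}$ partition $\p_\ka(\p(S))$ makes $D$ an ultrafilter; $\om_1$-completeness of $U$ together with the identity $\{p:x_p\in\bigcap_n X_n\}=\bigcap_n\{p:x_p\in X_n\}$ makes $D$ be $\om_1$-complete; and for $X\in F$ fineness gives $\{p:X\in p\}\in U$, while $X\in p$ forces $x_p\in\bigcap(p\cap F)\subseteq X$, so $X\in D$. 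Thus $D\supseteq F$ is the required extension, and this is exactly where $\ka$-completeness of $F$ is spent.

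For (a)$\Rightarrow$(b) with $\ka$ regular, I take on $\p_\ka A$ the canonical fine filter $F_0$ generated by the tail sets $\{a\in\p_\ka A:b\subseteq a\}$ for $b\in\p_\ka A$: it is proper and fine (each $\{a:x\in a\}$ is the tail set of $b=\{x\}$), and it is $\ka$-complete because an intersection of $<\ka$ tail sets is the tail set of the union of the relevant $b$'s, which still has size $<\ka$ by regularity; applying (a) to $F_0$ gives an $\om_1$-complete ultrafilter extending it, necessarily fine. Here a genuine subtlety appears for singular $\ka$: fineness plus $\ka$-completeness would force the tail set of a union of $\cf(\ka)$ many $b$'s with sizes cofinal in $\ka$ into the filter, and that tail set is empty, so no proper $\ka$-complete fine filter on $\p_\ka A$ exists at all. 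For singular $\ka$ one must therefore produce the fine $\om_1$-complete ultrafilter directly, without routing through a $\ka$-complete fine filter; this is a first point where extra work is needed.

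The main obstacle, however, is (2). Suppose $\ka$ is the least $\om_1$-strongly compact cardinal and $\ka=\mu^+$. By (1) there is a fine $\om_1$-complete ultrafilter on $\p_\ka\ka$, and projecting it by $a\mapsto\sup a$ yields a non-principal $\om_1$-complete ultrafilter on $\mu^+$; since measurable cardinals are inaccessible, hence limit cardinals, the successor $\mu^+$ is not measurable, so the least measurable $m$ satisfies $m\le\mu$. Recalling that by definition $\mu$ is $\om_1$-strongly compact exactly when every $\mu$-complete filter extends to an $\om_1$-complete ultrafilter, I would aim to establish this for $\mu$, thereby contradicting the minimality of $\ka$. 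The natural attempt is to repeat the $x_p$-construction of the second paragraph for a $\mu$-complete filter $F$, using a fine $\om_1$-complete ultrafilter on $\p_{\mu^+}(\p(S))$; the difficulty is precisely that now $\size{p\cap F}$ can reach $\mu$, so $\bigcap(p\cap F)$ may be empty and no point $x_p$ is available. Bridging this completeness gap—using the measurable $m\le\mu$ to select, uniformly in $p$, a $<\mu$-sized subfamily of $p\cap F$ whose intersection is still $F$-large—is the heart of the matter, and it is the step I expect to be hardest and most technical.
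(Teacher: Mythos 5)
Your dictionary arguments are fine: (b)$\Leftrightarrow$(c) via a bijection, and (b)$\Rightarrow$(a) via the points $x_p\in\bigcap(p\cap F)$ and the pushed-forward ultrafilter $D$, are the standard proofs and are correct as written (including the degenerate case $\size{\p(S)}<\ka$). But the proposal is not a proof of the Fact: the two places where you yourself flag that ``extra work is needed'' are precisely the mathematical content of the statement, and neither is bridged. First, (a)$\Rightarrow$(b) is established only for regular $\ka$. The singular case cannot be set aside: Bagaria and Magidor prove it is consistent that the least $\om_1$-strongly compact cardinal is singular, so the equivalence must be proved for singular $\ka$ as well, and your own observation that no $\ka$-complete fine filter on $\p_\ka A$ exists when $\ka$ is singular shows the cone-filter route is not merely inconvenient but provably dead. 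A genuinely different argument is required; in \cite{BM1, BM2} this goes through the elementary-embedding/covering reformulation (an embedding $j:V\to M$ with $\crit(j)\ge\om_1$ and a set $D\in M$ with $j[\la]\subseteq D$ and $\size{D}^M<j(\ka)$), together with the observation that every $\om_1$-complete ultrafilter is automatically complete up to the least measurable cardinal, none of which appears in your outline.

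Second, part (2) is left entirely open. You correctly reduce it to showing that if $\ka=\mu^+$ is $\om_1$-strongly compact then so is $\mu$, and you correctly extract a measurable $m\le\mu$, but the ``completeness gap'' you then describe ($p\cap F$ may have size exactly $\mu$ while $F$ is only $\mu$-complete) is the whole theorem, and your proposed repair --- choosing, uniformly in $p$, a $<\mu$-sized subfamily of $p\cap F$ whose intersection is $F$-large --- comes with no justification; it is not clear such a selection exists, and this is not how the result is actually proved. Note also that the paper you are reading offers no proof to compare against: Fact \ref{2.2++} is quoted as a black box from Bagaria--Magidor \cite{BM1, BM2}, and both of your missing steps are nontrivial theorems there. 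As it stands, your write-up establishes (b)$\Leftrightarrow$(c), (b)$\Rightarrow$(a), and (a)$\Rightarrow$(b) for regular $\ka$ only, with part (2) unproved.
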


The following lemma immediately implies 
that $L(X_\delta)$ is bounded by an $\om_1$-strongly compact cardinal for every compact Hausdorff space $X$.

\begin{lemma}\label{3.1}
Let $\ka$ be an $\om_1$-strongly compact cardinal,
and $X$ a Lindel\"of space (no separation axiom is assumed).
Let $\calO$ be a cover of $G_\delta$-subsets of $X$.
Then $\calO$ has a subcover of size $<\ka$.
\end{lemma}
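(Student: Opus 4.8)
The plan is to argue by contradiction using a fine $\om_1$-complete ultrafilter, exploiting $\om_1$-completeness twice: once to manufacture a ``limit point'' of a family of witnesses via the Lindel\"of property, and once to absorb the countable intersection defining a $G_\delta$ member of the cover. Fineness will then supply the final clash.

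First I would dispose of the trivial case: if $\size{\calO}<\ka$ then $\calO$ is itself a subcover of size $<\ka$, so I may assume $\size{\calO}\ge\ka$ and set $A=\calO$. By Fact~\ref{2.2++}, fix an $\om_1$-complete fine ultrafilter $U$ over $\p_\ka A$. Suppose toward a contradiction that $\calO$ has no subcover of size $<\ka$. Then for each $a\in\p_\ka A$ the subfamily $a$ (which has size $<\ka$) fails to cover $X$, so I may choose a point $x_a\in X\setminus\bigcup a$, obtaining a family $\seq{x_a : a\in\p_\ka A}$ of witnesses.

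The crux of the argument is to produce a single point $x^*\in X$ that is a \emph{$U$-limit} of this family, meaning $\{a : x_a\in V\}\in U$ for every open $V\ni x^*$. To see such a point exists, suppose not: then every $x\in X$ has an open neighborhood $V_x$ with $\{a : x_a\in V_x\}\notin U$, hence $\{a : x_a\notin V_x\}\in U$ since $U$ is an ultrafilter. The sets $V_x$ form an open cover of $X$, so by Lindel\"ofness there is a countable subcover $\{V_{x_n} : n<\om\}$. By $\om_1$-completeness $\bigcap_n\{a : x_a\notin V_{x_n}\}\in U$, so this set is nonempty; any $a$ in it satisfies $x_a\notin\bigcup_n V_{x_n}=X$, which is absurd. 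This is the step I expect to be the main obstacle, since it is where the Lindel\"of hypothesis and the $\om_1$-completeness of $U$ must be combined, and where the absence of separation axioms must be respected: the argument may speak only of open neighborhoods of points, never of separating points from closed sets.

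Finally I would derive the contradiction from $x^*$. Since $\calO$ covers $X$, pick $O\in\calO=A$ with $x^*\in O$, and write $O=\bigcap_n U_n$ with each $U_n$ open. For every $n$ we have $x^*\in U_n$, so $\{a : x_a\in U_n\}\in U$ by the $U$-limit property; applying $\om_1$-completeness again yields $\{a : x_a\in O\}=\bigcap_n\{a : x_a\in U_n\}\in U$. On the other hand, fineness gives $\{a : O\in a\}\in U$, and whenever $O\in a$ the witness satisfies $x_a\notin\bigcup a\supseteq O$, so $\{a : x_a\notin O\}\in U$ as well. As these two sets are complementary, they cannot both lie in $U$, the desired contradiction. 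Hence $\calO$ has a subcover of size $<\ka$.
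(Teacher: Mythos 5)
Your proof is correct, and while it uses the same raw ingredients as the paper's --- the contradiction hypothesis, an $\om_1$-complete fine ultrafilter $U$ over $\p_\ka \calO$, and witnesses $x_a \in X \setminus \bigcup a$ for $a \in \p_\ka \calO$ --- it assembles them in a genuinely different way. The paper writes each complement $X \setminus O_\alpha$ as an increasing union $\bigcup_{n<\om} F^\alpha_n$ of closed sets, records for each $a$ a function $f_a$ with $x_a \in F^\alpha_{f_a(\alpha)}$ for $\alpha \in a$, stabilizes each coordinate via $\om_1$-completeness to get indices $n_\alpha$ with $\{a \mid f_a(\alpha)=n_\alpha\} \in U$, and then shows that $\{X \setminus F^\alpha_{n_\alpha} \mid \alpha<\la\}$ is an open cover of $X$ in the \emph{original} topology with no countable subcover; Lindel\"ofness is contradicted only at the very last step. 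You instead spend Lindel\"ofness (together with $\om_1$-completeness) up front to manufacture a $U$-limit point $x^*$ of the witnesses --- the analogue, for $\om_1$-complete ultrafilters and Lindel\"of spaces, of the existence of ultrafilter limits in compact spaces --- and then localize the contradiction at $x^*$: $\om_1$-completeness absorbs the countable intersection of open sets defining the $G_\delta$ member containing $x^*$, and fineness produces two disjoint sets both lying in $U$. What your route buys is a cleanly quotable intermediate principle (every $\p_\ka\calO$-indexed family of points in a Lindel\"of space has a limit along any $\om_1$-complete ultrafilter) and a final step that is pure ultrafilter arithmetic, with no need to decompose complements of $G_\delta$ sets into closed pieces; the paper's route keeps all topological objects open in the original topology and ends by explicitly exhibiting the cover that would violate Lindel\"ofness. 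Both arguments are complete, correct, and of comparable length.
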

\begin{proof}
Suppose to the contrary that $\calO$ has no subcover of size $<\ka$.
Let $\{O_\alpha \mid \alpha<\la\}$ be an enumeration of $\calO$,
where $\la \ge \ka$.
For $\alpha<\la$,
take a $\subseteq$-increasing sequence $\seq{F^\alpha_n \mid n<\om}$ of closed subsets of $X$ with
$X \setminus O_\alpha= \bigcup_{n<\om} F^\alpha_n$.
Since $\ka$ is $\om_1$-strongly compact,
there is an $\om_1$-complete fine ultrafilter $U$ over $\p_\ka \la$.
Since $\calO$ has no subcover of size $<\ka$, 
for each $a \in \pkl$, 
we know $\bigcap_{\alpha \in a}(\bigcup_{n<\om} F^\alpha_n) \neq \emptyset$.
Thus there is $f_a:a \to \om$ so that
$\bigcap \{ F^\alpha_{f_a(\alpha)} \mid \alpha \in a\}$ is non-empty.
Then for each $\alpha$, since the filter $U$ is $\om_1$-complete,
there is $n_\alpha$ with
$\{a \in \pkl \mid f_a(\alpha)=n_\alpha\} \in U$.
However then $\{X \setminus F^\alpha_{n_\alpha} \mid \alpha<\la\}$ is a cover of $X$ but 
does not have a countable subcover, this is a contradiction.
\end{proof}

For proving (2), (3) $\Rightarrow$ (1) in Theorem \ref{thm2},
we introduce a useful notion which came from Gorelic \cite{G}.

Let $D$ be a discrete space, and $\beta D$ be the Stone-\v Cech compactification of $D$,
namely, $\beta D$ is the set of all ultrafilters over $D$,
and the topology is generated by the family $\{ \{U \in \beta D \mid A \in U\} \mid A \subseteq D\}$.
Let $\gamma D$ be a subspace of 
$\{U \in \beta D \mid U$ is not $\om_1$-complete$\}$.
Then for every $U \in \gamma D$, there is a countable partition $\calA$ of $D$
such that $A \notin U$ for every $A \in \calA$.
\begin{define}
Let us say that a cover $\calO$ of $G_\delta$-subsets of $\gamma D$
is a \emph{proper $G_\delta$-cover} if
for every $G \in \calO$,
there is a countable partition $\calA$ of $D$
such that $G=\{U \in \gamma D \mid A \notin U$ for every $A \in \calA\}$.
\end{define}

\begin{define}
For an uncountable cardinal $\ka$ and a cardinal $\la \ge \ka$,
let $\Fine(\pkl)$ be the set of all fine ultrafilters over $\pkl$.
\end{define}
Identifying $\pkl$ as a discrete space,
one can check that $\Fine(\pkl)$ is a closed subspace of
$\beta(\pkl)$, hence is compact Hausdorff.
Note also that if there is no $\om_1$-complete fine ultrafilter over $\pkl$,
then $\Fine(\pkl)$ has a proper $G_\delta$-cover.

\begin{prop}\label{2.5++}
Let $\ka$ be an uncountable cardinal, and $\la \ge \ka$ a cardinal.
Suppose that 
there is no $\om_1$-complete fine ultrafilter over $\pkl$.
Then $\Fine(\pkl)$ has no
proper $G_\delta$-cover of size $<\ka$.
\end{prop}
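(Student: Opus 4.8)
The plan is to argue by contradiction, turning the statement into the construction of a single fine ultrafilter that ``selects'' a piece from each partition. Suppose $\{G_\beta : \beta<\mu\}$ is a proper $G_\delta$-cover with $\mu<\ka$, where $G_\beta$ comes from a countable partition $\calA_\beta=\{A^\beta_n : n<\om\}$ of $\pkl$, so that $G_\beta=\{U \in \Fine(\pkl) : A^\beta_n\notin U \text{ for all } n\}$. Being a cover means precisely that there is \emph{no} fine ultrafilter $U$ with the property that for every $\beta<\mu$ some $A^\beta_n\in U$. My goal is to build exactly such a $U$, which will be the desired contradiction. Throughout I work with the canonical $\om_1$-complete fine filter
\[
F^*=\{S\subseteq\pkl : \{a\in\pkl : c_0\subseteq a\}\subseteq S \text{ for some countable } c_0\subseteq\la\},
\]
whose dual ideal $I^*$ is $\om_1$-complete; in particular any countable partition of $\pkl$ has a piece not in $I^*$ (otherwise $\pkl$ itself lies in the $\om_1$-complete ideal $I^*$), which is the basic splitting fact I will use repeatedly.

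Next I would phrase everything through a compact image. For $a\in\pkl$ let $g_\beta(a)$ be the unique $n$ with $a\in A^\beta_n$ and set $\tilde h(a)=\langle g_\beta(a) : \beta<\mu\rangle$, viewed in the compact space $(\om\cup\{\infty\})^\mu$ (one-point compactification of $\om$ in each coordinate). For a fine ultrafilter $U\supseteq F^*$ the limit $\Phi(U):=\lim_{a\to U}\tilde h(a)$ exists, with $\Phi(U)(\beta)=n$ iff $A^\beta_n\in U$ and $\Phi(U)(\beta)=\infty$ iff $U\in G_\beta$. The set of such limits is
\[
\calK=\bigcap_{S\in F^*}\overline{\tilde h[S]}\subseteq(\om\cup\{\infty\})^\mu,
\]
which is nonempty by compactness and the countable directedness of $F^*$. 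Because the product topology only constrains finitely many coordinates, for $c\in\om^\mu$ one checks that $c\in\calK$ is equivalent to: for every finite $F\subseteq\mu$ the set $\bigcap_{\beta\in F}A^\beta_{c(\beta)}$ is $F^*$-positive, i.e.\ to the family $F^*\cup\{A^\beta_{c(\beta)} : \beta<\mu\}$ having the finite intersection property. Thus an everywhere-finite point of $\calK$ yields, via the ultrafilter lemma, a fine ultrafilter extending $F^*$ and selecting $A^\beta_{c(\beta)}$ for each $\beta$, contradicting that $\{G_\beta\}$ covers. So the whole problem reduces to showing $\calK\cap\om^\mu\neq\emptyset$. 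As a first step, the splitting fact shows that \emph{no single coordinate is identically $\infty$ on $\calK$}: if $A^\beta_n\in I^*$ for all $n$, then $\pkl=\bigcup_n A^\beta_n\in I^*$ by $\om_1$-completeness, which is absurd. Hence each coordinate is attained by a finite value somewhere on $\calK$.

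The main obstacle is the passage from ``each coordinate is finite somewhere on $\calK$'' to ``some single point of $\calK$ is everywhere finite'', since these are genuinely different and the individual positive pieces need not have positive common intersections. The plan here is a transfinite recursion of length $\mu$ choosing $c(\beta)\in\om$ while maintaining an $\om_1$-complete proper fine filter $F_\beta\supseteq F^*$ with $A^{\beta'}_{c(\beta')}\in F_\beta$ for all $\beta'<\beta$; at successor steps the splitting fact applied to $F_\beta$ and the countable partition $\calA_\beta$ supplies an $F_\beta$-positive piece to adjoin, keeping $F_{\beta+1}$ $\om_1$-complete and proper. The delicate point—and the step I expect to be hardest—is the limit stages of countable cofinality, where one must guarantee that the descending intersection $\bigcap_k A^{\beta_k}_{c(\beta_k)}$ along a cofinal sequence $\beta_k\to\beta$ remains $F^*$-positive; a careless choice can let the chosen coordinate escape to $\infty$ there, which is exactly the failure mode ruled out coordinatewise above but not yet globally. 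This is where the $\om_1$-completeness of $F^*$ does the essential work and where a fusion-style bookkeeping of the choices is needed, and it is also where the hypothesis that there is no $\om_1$-complete fine ultrafilter over $\pkl$ is naturally placed: it confirms that we are in the nontrivial regime (all fine ultrafilters being non-$\om_1$-complete) in which the selecting ultrafilter we produce must itself be non-complete. Once the recursion is carried through, the resulting $c\in\om^\mu$ lies in $\calK$, the family $F^*\cup\{A^\beta_{c(\beta)}\}$ has the finite intersection property, and any fine ultrafilter extending it avoids every $G_\beta$, contradicting that $\{G_\beta : \beta<\mu\}$ is a cover. Therefore $\Fine(\pkl)$ has no proper $G_\delta$-cover of size $<\ka$.
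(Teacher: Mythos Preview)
Your reduction to finding $c\in\om^\mu$ such that $F^*\cup\{A^\beta_{c(\beta)}:\beta<\mu\}$ has the finite intersection property is correct, but the transfinite recursion you propose has a genuine gap at limits of countable cofinality---a gap you yourself flag as ``the step I expect to be hardest.'' Maintaining an $\om_1$-complete proper filter $F_\beta$ requires, at such a limit $\beta$, that $\bigcap_{\beta'\in I}A^{\beta'}_{c(\beta')}$ remain $F^*$-positive for \emph{every} countable $I\subseteq\beta$; your successor steps only secure this for bounded $I$, and the appeal to ``fusion-style bookkeeping'' supplies no mechanism. There is no evident way to anticipate at stage $\beta'$ which of the unboundedly many future countable sets through $\beta'$ must be controlled. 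Dropping $\om_1$-completeness is not an option either, since without it the splitting step fails: a non-$\om_1$-complete filter can place every piece of a countable partition in its dual ideal. As you suspect, the hypothesis that no $\om_1$-complete fine ultrafilter exists is used only to make the statement non-vacuous; it does not rescue the recursion.

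The paper bypasses the recursion entirely with an elementary-submodel argument, exploiting an idea already implicit in your map $\tilde h$: every $a\in\pkl$ determines a candidate $c=\tilde h(a)$ by letting $n_\beta$ be the unique $n$ with $a\in A^\beta_n$. The point is to choose a single $a$ for which this $c$ works. Take $M\prec H(\theta)$ of size $<\ka$ with $\mu\subseteq M$ and all relevant objects in $M$, and set $a=M\cap\la\in\pkl$. For finitely many $\beta_0,\dots,\beta_k<\mu$ and $\xi_0,\dots,\xi_k<\la$ one needs $b\in\bigcap_i A^{\beta_i}_{n_{\beta_i}}$ with each $\xi_i\in b$. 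If no such $b$ exists, then since $\seq{\beta_i},\seq{n_{\beta_i}}\in M$ (here $\mu\subseteq M$ is used), elementarity gives witnesses $\xi_i'\in M\cap\la=a$; but $a$ itself lies in $\bigcap_i A^{\beta_i}_{n_{\beta_i}}$ and contains each $\xi_i'$, a contradiction. Thus the FIP holds and a fine ultrafilter extending the family avoids every $G_\beta$---with no recursion and no limit-stage difficulty.
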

\begin{proof}
The idea of the following proof came from Gorelic \cite{G}.
Suppose to the contrary that there is a proper $G_\delta$-cover $\calO$ of size $<\ka$.
Let $\mu=\size{\calO}$,
and $\{Z_\alpha \mid \alpha<\mu\}$ be an enumeration of
$\calO$. For $\alpha<\mu$, let $\{A^\alpha_n \mid n<\om \}$ be a countable partition
of $\pkl$ with
$Z_\alpha=\{U \in \Fine(\pkl) \mid A_n^\alpha \notin U$ for every $n<\om\}$.

Fix a large regular cardinal $\theta$,
and take an elementary submodel $M \prec H(\theta)$ such that
$\size{M}<\ka$, $\mu \subseteq M$, and $M$ contains all relevant objects.
Let $a=M \cap \la \in \pkl$.
For each $\alpha <\mu$, there is $n_\alpha<\om$ with $a \in A^{\alpha}_{n_\alpha}$.
\begin{claim}
For every finitely many $\alpha_0,\dotsc, \alpha_k<\mu$ and $\beta_0,\dotsc, \beta_k<\la$,
there is $b \in \bigcap_{i \le k} A^{\alpha_i}_{n_{\alpha_i}}$ such that $\beta_i \in b$ for 
every $i \le k$.
\end{claim}
\begin{proof}
Note that $\seq{\alpha_i \mid i \le k}, \seq{n_{\alpha_i}\mid i \le k} \in M$,
hence $\{A^{\alpha_i}_{n_{\alpha_i}} \mid i \le k\} \in M$.
If there is no such $b \in \bigcap_{i \le k} A^{\alpha_i}_{n_{\alpha_i}}$,
by the elementarity of $M$,
there are $\gamma_0,\dotsc, \gamma_k \in M \cap \la$
such that
there is no $b \in \bigcap_{i \le k} A^{\alpha_i}_{n_{\alpha_i}}$ with $\gamma_i \in b$.
However $\gamma_i \in M \cap \lambda=a$ and $a \in \bigcap_{i \le k} A^{\alpha_i}_{n_{\alpha_i}}$,
this is a contradiction.
\end{proof}
By the claim,
the family $\{A^\alpha_{n_{\alpha}} \mid \alpha<\mu\} 
\cup \{ \{x \in \pkl \mid \beta \in x\} \mid \beta<\la\}$
has the finite intersection property.
Thus we can find a fine ultrafilter $U$ over $\pkl$
such that $A^{\alpha}_{n_\alpha} \in U$ for every $\alpha<\mu$.
Then $U \notin Z_\alpha$ for every $\alpha<\mu$,
this contradicts  the choice of $\calO$.
\end{proof}

\begin{cor}\label{2.7++}
Let $\ka$ be an uncountable cardinal,
and suppose $L(X_\delta) \le \ka$ for every compact Hausdorff space $X$.
Then $\ka$ is $\om_1$-strongly compact.
\end{cor}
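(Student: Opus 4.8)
The plan is to prove the contrapositive: assuming $\ka$ is \emph{not} $\om_1$-strongly compact, I will exhibit a compact Hausdorff space $X$ with $L(X_\delta)>\ka$. By Fact \ref{2.2++}(1), the failure of $\om_1$-strong compactness gives a cardinal $\la \ge \ka$ such that there is no $\om_1$-complete fine ultrafilter over $\pkl$. This is exactly the hypothesis of Proposition \ref{2.5++}, so I would take $X=\Fine(\pkl)$, which we have already observed is a closed subspace of $\beta(\pkl)$ and hence compact Hausdorff.

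The core of the argument is then simply to quote the two facts that have been set up for precisely this purpose. Since there is no $\om_1$-complete fine ultrafilter over $\pkl$, the remark preceding Proposition \ref{2.5++} tells us that $\Fine(\pkl)$ \emph{has} a proper $G_\delta$-cover; call it $\calO$. Because the sets in $\calO$ are $G_\delta$-subsets of $\Fine(\pkl)$, the family $\calO$ is an open cover of the $G_\delta$-modification $(\Fine(\pkl))_\delta$. On the other hand, Proposition \ref{2.5++} guarantees that $\calO$ has no proper $G_\delta$-subcover of size $<\ka$. The one gap I would need to close is that $\calO$ admits no subcover of size $<\ka$ \emph{at all}, not merely no \emph{proper} one: since every member of a proper $G_\delta$-cover is itself one of the canonically-indexed sets $Z_\alpha$, any subfamily of $\calO$ is automatically a proper $G_\delta$-cover of whatever it covers, so a subcover of size $<\ka$ would contradict the Proposition directly.

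Having produced an open cover of $(\Fine(\pkl))_\delta$ with no subcover of size $<\ka$, I conclude $L((\Fine(\pkl))_\delta) \ge \ka$. To obtain the strict inequality $L((\Fine(\pkl))_\delta)>\ka$ I would observe that the Proposition rules out subcovers of size $<\ka$ and hence gives $L \ge \ka$; to push past $\ka$ I can invoke Fact \ref{2.2++}(2), which says the least $\om_1$-strongly compact cardinal is a limit cardinal, together with the standard closure property that every cardinal above an $\om_1$-strongly compact one is again $\om_1$-strongly compact, so that a non-compact $\ka$ can be replaced by a slightly larger non-compact $\ka'$ and the space $\Fine(\mathcal{P}_{\ka'}\la')$ used instead. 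This yields $L(X_\delta)>\ka$ for a compact Hausdorff $X$, contradicting the hypothesis and completing the proof.

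The main obstacle I anticipate is the bookkeeping in that last step: disentangling ``no subcover of size $<\ka$'' (which gives $L \ge \ka$) from the strict inequality the corollary's contrapositive really needs. The cleanest route is probably to argue that if $\ka$ fails to be $\om_1$-strongly compact then some $\la$ witnesses the absence of an $\om_1$-complete fine ultrafilter over $\pkl$, whence $L((\Fine(\pkl))_\delta)>\ka$ follows because $\Fine(\pkl)$ has a proper $G_\delta$-cover with no sub-proper-$G_\delta$-cover of size $\le \ka$ — and here I must be careful that Proposition \ref{2.5++} is applied at the right cardinal so that the bound is $>\ka$ rather than merely $\ge \ka$. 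All the topological content has already been isolated into Proposition \ref{2.5++}, so beyond this indexing subtlety the corollary should be essentially immediate.
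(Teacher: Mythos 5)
Your proposal is correct and takes essentially the same route as the paper: both arguments hinge on applying Proposition \ref{2.5++} at $\ka^+$ rather than at $\ka$ itself, and both use Fact \ref{2.2++}(2) (the least $\om_1$-strongly compact cardinal is a limit cardinal) together with the upward closure of $\om_1$-strong compactness to bridge between $\ka$ and $\ka^+$ --- exactly the ``indexing subtlety'' you flagged. The only difference is that the paper argues directly (the hypothesis gives subcovers of size $<\ka^+$, hence $\om_1$-complete fine ultrafilters on every $\p_{\ka^+}\la$, hence $\ka^+$ and then $\ka$ are $\om_1$-strongly compact), whereas you run the contrapositive; the ingredients are identical.
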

\begin{proof}
By the assumption,
for every compact Hausdorff space $X$,  every cover of $G_\delta$-subsets of $X$
has a subcover of size $<\ka^+$.
By Proposition \ref{2.5++}, for every cardinal $\la \ge \ka^+$,
$\p_{\ka^+} \la$ carries an $\om_1$-complete fine ultrafilter over $\p_{\ka^+} \la$.
Then $\ka^+$ is $\om_1$-strongly compact by Fact \ref{2.2++}.
Again, by Fact \ref{2.2++},
the least $\om_1$-strongly compact cardinal is a limit cardinal.
Hence $\ka^+$ is not the least $\om_1$-strongly compact cardinal,
and we conclude that $\ka$ is $\om_1$-strongly compact.
\end{proof}

For the weak Lindel\"of degree, we use Alexandroff duplicate $\mathbb{A}(X)$.
\begin{define}
For a topological space $X$,
let $\bbA(X)$ be the space defined as follows:
The underlying set of $\bbA(X)$ is $X \times \{0,1\}$.
The topology of $\bbA(X)$  is defined as follows:
\begin{enumerate}
\item Each $\seq{x, 0} \in \bbA(X)$ is isolated.
\item For $\seq{x,1} \in \bbA(X)$,
an open neighborhood of $\seq{x,1}$ is of the form
$(O \times \{0,1\}) \setminus \{\seq{x_0,0},\dotsc, \seq{x_n,0}\}$
for some open $O \subseteq X$ with $x \in O$ and
finitely many $x_0,\dotsc, x_n \in X$.
\end{enumerate}
\end{define}
It is easy to check that if $X$ is compact Hausdorff (regular $T_1$ Lindel\"of, respectively)  
then $\bbA(X)$ is compact Hausdorff (regular $T_1$ Lindel\"of, respectively) as well.

\begin{lemma}\label{2.9++}
Let $X$ be a topological space.
Then $L(X_\delta)=wL(\bbA(X)_\delta)$.
\end{lemma}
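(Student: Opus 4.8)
The plan is to first pin down the $G_\delta$-topology of $\bbA(X)$ and then establish the two inequalities $L(X_\delta) \le wL(\bbA(X)_\delta)$ and $L(\bbA(X)_\delta) \le L(X_\delta)$. Since $wL(Y) \le L(Y)$ for every space $Y$ (a subcover is in particular a subfamily with dense union), these together with the chain $L(X_\delta) \le wL(\bbA(X)_\delta) \le L(\bbA(X)_\delta) \le L(X_\delta)$ force equality. The two structural facts I would record at the outset are: (i) every bottom point $\seq{x,0}$ is isolated in $\bbA(X)_\delta$, being already isolated in $\bbA(X)$, so any dense subset of $\bbA(X)_\delta$ must contain all of $X \times \{0\}$; and (ii) every open neighborhood of a top point $\seq{x,1}$ in $\bbA(X)_\delta$ contains a set of the form $(G \times \{0,1\}) \setminus C$, where $G$ is a $G_\delta$-subset of $X$ with $x \in G$ and $C \subseteq X \times \{0\}$ is countable. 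Fact (ii) follows by intersecting countably many basic neighborhoods $(O_n \times \{0,1\}) \setminus F_n$: the intersection contains $((\bigcap_n O_n) \times \{0,1\}) \setminus \bigcup_n F_n$, where $\bigcap_n O_n$ is $G_\delta$ and $\bigcup_n F_n$ is countable.

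For $L(X_\delta) \le wL(\bbA(X)_\delta)$, I would start from an open cover $\calO$ of $X_\delta$ and lift it to $\{O \times \{0,1\} : O \in \calO\}$. Each $O \times \{0,1\}$ is open in $\bbA(X)_\delta$ (over a top point $\seq{x,1}$ it contains $G \times \{0,1\}$ for a $G_\delta$-set $G$ with $x \in G \subseteq O$, and bottom points are isolated), and the family covers $\bbA(X)_\delta$. A subfamily with dense union of size $\le wL(\bbA(X)_\delta)$, indexed by $\{O_i : i \in I\}$, must by fact (i) contain every bottom point $\seq{x,0}$, which forces $\bigcup_{i \in I} O_i = X$; hence $\{O_i : i \in I\}$ is a subcover of $\calO$ of size $\le wL(\bbA(X)_\delta)$.

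The substantive direction is $L(\bbA(X)_\delta) \le L(X_\delta)$. Writing $\la = L(X_\delta)$ and given an open cover $\calU$ of $\bbA(X)_\delta$, I would first cover the top copy: for each $x$ choose $U_x \in \calU$ with $\seq{x,1} \in U_x$ and, by fact (ii), a $G_\delta$-set $G_x \ni x$ and a countable $C_x \subseteq X \times \{0\}$ with $(G_x \times \{0,1\}) \setminus C_x \subseteq U_x$. Since $\{G_x : x \in X\}$ is an open cover of $X_\delta$, there is $I_0$ with $\size{I_0} \le \la$ and $\bigcup_{i \in I_0} G_{x_i} = X$; the sets $U_{x_i}$ then cover all of $X \times \{1\}$ (each contains $G_{x_i} \times \{1\}$) and all bottom points except those lying in $\bigcup_{i \in I_0} C_{x_i}$. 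The key observation is that this exceptional set is small: any uncovered $\seq{y,0}$ satisfies $y \in G_{x_i}$ for some $i \in I_0$ yet $\seq{y,0} \in C_{x_i}$, so it belongs to a union of $\le \la$ countable sets and there are at most $\la$ of them. Finally, since each such leftover bottom point is isolated and hence lies in some member of $\calU$, I add one member of $\calU$ for each, enlarging the subfamily by at most $\la$. The resulting subfamily has size $\le \la$ and covers $\bbA(X)_\delta$, giving $L(\bbA(X)_\delta) \le \la$.

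I expect the only real obstacle to be the counting step in this last direction, namely recognizing that the bottom points missed after covering the top are confined to a union of $\le L(X_\delta)$ countable exception sets $C_{x_i}$ and can therefore be mopped up with $\le L(X_\delta)$ further members of $\calU$. The remainder is a routine verification of the $G_\delta$-neighborhood structure described in facts (i) and (ii) together with the trivial bound $wL \le L$.
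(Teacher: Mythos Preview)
Your proof is correct and follows essentially the same approach as the paper: the direction $L(X_\delta)\le wL(\bbA(X)_\delta)$ is identical, and for the other direction you prove the (formally stronger) bound $L(\bbA(X)_\delta)\le L(X_\delta)$ and invoke $wL\le L$, whereas the paper argues $wL(\bbA(X)_\delta)\le L(X_\delta)$ directly---but the underlying argument (cover the top copy via a size-$L(X_\delta)$ subcover of the associated $G_\delta$-cover of $X$, then mop up the $\le L(X_\delta)$ many missed isolated bottom points) is the same.
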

\begin{proof}
Let $\ka=L(X_\delta)$ and $\la=wL(\bbA(X)_\delta)$.
We shall show $\ka=\la$.

$\ka \le \la$: Take an open cover $\calO$ of $X_\delta$ such that $\calO$ 
has no subcover of size $<\ka$.
Then $\calW=\{O \times \{0,1\} \mid O \in \calO \}$ is an open cover of
$\bbA(X)_\delta$.
If $\calW'$ is a subfamily of $\calW$ with dense union,
then $\bigcup \calW'=\bbA(X)_\delta$ because $X \times \{0\}$ is discrete in $\bbA(X)_\delta$.
Hence $\{O \mid O \times \{0,1\} \in \calW'\}$ is a cover of $X_\delta$.
This means that $\calW$ has no subfamily of size $<\ka$ with dense union,
and we have $\ka \le \la$.

$\la \le \ka$: Let $\calW$ be an open cover of $\bbA(X)_\delta$
such that every subfamily of size $<\la$ has no dense union.
We may assume that every element $W$ of $\calW$ is of the form
$\{\seq{x,0}\}$ for some $x \in X$, or
$(Z_W \times \{0,1\}) \setminus Y_W$
for some $G_\delta$-subset $Z_W$ of $X$ and
some countable $Y_W \subseteq X \times \{0\}$.
Then $\calO=\{Z_W \mid W \in \calV\}$ is an open cover of $X_\delta$.
If $\calO'$ is a subcover of $\calO$,
then $\size{(X \times \{0,1\}) \setminus \bigcup \{W \mid Z_W \in \calO'\}} 
\le \size{\calO'}$.
This means that $\calO$ has no subcover of size $<\la$,
and we have $\la \le \ka$.
\end{proof}

Now we are ready to prove Theorem \ref{thm2}.
\begin{cor}\label{3.6}
Let $\ka$ be an uncountable cardinal.
Then the following are equivalent:
\begin{enumerate}
\item $\ka$ is $\om_1$-strongly compact.
\item $L(X_\delta) \le \ka$ for every Lindel\"of space $X$.
\item $L(X_\delta) \le \ka$ for every compact Hausdorff space $X$.
\item $wL(X_\delta) \le \ka$ for every compact Hausdorff space $X$.
\end{enumerate}
\end{cor}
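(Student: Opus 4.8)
The plan is to establish the four-way equivalence by assembling the lemmas and propositions already proved, arranging the implications in a cycle that visits all four statements. I would prove the chain $(1) \Rightarrow (2) \Rightarrow (3) \Rightarrow (1)$ together with a separate bidirectional link between $(3)$ and $(4)$, which is the cleanest way to close the loop since $(4)$ concerns the weak Lindel\"of degree rather than the ordinary one.

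First I would show $(1) \Rightarrow (2)$. Assume $\ka$ is $\om_1$-strongly compact and let $X$ be any Lindel\"of space. An open cover of $X_\delta$ refines to a cover by $G_\delta$-subsets of $X$ (since the $G_\delta$-topology is generated by the $G_\delta$-sets), so by Lemma \ref{3.1} it has a subcover of size $<\ka$, whence $L(X_\delta) \le \ka$. The implication $(2) \Rightarrow (3)$ is immediate, because every compact Hausdorff space is in particular Lindel\"of. For $(3) \Rightarrow (1)$ I would invoke Corollary \ref{2.7++} directly: the hypothesis $L(X_\delta) \le \ka$ for all compact Hausdorff $X$ is exactly its assumption, and its conclusion is that $\ka$ is $\om_1$-strongly compact. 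This already gives the equivalence of $(1)$, $(2)$, and $(3)$.

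To incorporate $(4)$ I would use Lemma \ref{2.9++} together with the Alexandroff duplicate. For $(3) \Rightarrow (4)$, given a compact Hausdorff $X$, note that $\bbA(X)$ is again compact Hausdorff, so by the remark following the definition of $\bbA$ the space $X$ itself witnesses that $wL(X_\delta) = wL(\bbA(Y)_\delta)$ for an appropriate $Y$; more precisely, for any compact Hausdorff $X$ we have $wL(X_\delta) = L((X')_\delta)$ where $X = \bbA(X')$ is possible only for duplicates, so instead I would argue as follows. Assuming $(1)$, every compact Hausdorff space $Y$ satisfies $L(Y_\delta) \le \ka$; applying Lemma \ref{2.9++} with $Y$ in place of $\bbA(X)$ is not quite the shape we want, so the efficient route is: since $wL(Z_\delta) \le L(Z_\delta)$ always holds, $(3) \Rightarrow (4)$ is trivial. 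For the reverse $(4) \Rightarrow (1)$, given any compact Hausdorff $X$, the duplicate $\bbA(X)$ is compact Hausdorff, so by $(4)$ we have $wL(\bbA(X)_\delta) \le \ka$; Lemma \ref{2.9++} then yields $L(X_\delta) = wL(\bbA(X)_\delta) \le \ka$, which is statement $(3)$, and we have already shown $(3) \Rightarrow (1)$.

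The main obstacle I anticipate is purely bookkeeping rather than mathematical: making sure the Alexandroff duplicate preserves compact Hausdorffness so that Lemma \ref{2.9++} may legitimately be applied to compact $X$, and verifying that the trivial inequality $wL \le L$ goes in the direction needed to avoid circularity. Once these are confirmed, the proof is a short diagram chase: $(1) \Rightarrow (2) \Rightarrow (3)$ by Lemma \ref{3.1} and specialization to compact spaces, $(3) \Rightarrow (4)$ by $wL \le L$, $(4) \Rightarrow (3)$ by Lemma \ref{2.9++} applied to $\bbA(X)$, and $(3) \Rightarrow (1)$ by Corollary \ref{2.7++}. No new estimates are required.
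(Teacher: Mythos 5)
Your proof is correct and takes essentially the same route as the paper: $(1) \Rightarrow (2)$ via Lemma \ref{3.1}, $(2) \Rightarrow (3)$ by specialization, $(3) \Rightarrow (1)$ via Corollary \ref{2.7++}, $(3) \Rightarrow (4)$ by the trivial inequality $wL \le L$, and $(4) \Rightarrow (3)$ by applying Lemma \ref{2.9++} to the compact Hausdorff duplicate $\bbA(X)$. The false start in your third paragraph is harmless, since you correctly abandon it in favor of this argument, which is exactly the paper's.
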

\begin{proof}
(1) $\Rightarrow$ (2) follows from Lemma \ref{3.1}, and
(2) $\Rightarrow$ (3) is trivial. (3) $\Rightarrow$ (1) follows from Corollary \ref{2.7++}.
(3) $\Rightarrow$ (4) is trivial,
and (4) $\Rightarrow$ (3) follows from Lemma \ref{2.9++}.
\end{proof}

\begin{remark}
If $\ka$ is the least $\om_1$-strongly compact cardinal,
then we cannot improve the condition ``$wL(X_\delta) \le \ka$'' in Corollary \ref{3.6} to ``$wL(X_\delta)<\ka$'';
For every
cardinal $\la<\ka$ there is a compact Hausdorff space $X_\la$
with $wL((X_\la)_\delta) >\la$.
Let $X$ be the topological sum of the $X_\lambda$'s.
$X$ is a locally compact Hausdorff space.
Let $\alpha X$ be the one-point compactification of $X$.
It is not hard to see that for every cardinal $\nu<\ka$,
there is a cardinal $\la<\ka$ with $\nu \le \la$ and a cover $\calO$ of $G_\delta$-subsets of $\alpha X$
such that every subfamily of $\calO$ of size $<\la$ has no dense union in $(\alpha X)_\delta$.

\end{remark}

\section{The square of a Lindel\"of space}

In this section we prove Theorem \ref{thm4}.
Recall that the Cohen forcing notion is the poset $2^{<\om}$ with the reverse inclusion.
\begin{prop}\label{4.1}
Let $\ka$ be an uncountable cardinal, $\la \ge \ka$ a cardinal,
and suppose there is no $\om_1$-complete fine ultrafilter over $\pkl$.
Then $\bbC$ forces the following:
There are regular $T_1$ Lindel\"of spaces $X_0$ and $X_1$ with
$L(X_0 \times X_1) \ge \ka$.
\end{prop}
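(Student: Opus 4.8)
The plan is to force with $\bbC$ a pair of Lindel\"of spaces whose product has a large (cover-theoretic) Lindel\"of degree, by transplanting the combinatorial obstruction of Proposition \ref{2.5++} into a pair of spaces built from the generic Cohen reals. The guiding idea is that $\Fine(\pkl)$ has no proper $G_\delta$-cover of size $<\ka$ (Proposition \ref{2.5++}); I want to manufacture, in the extension, two Lindel\"of spaces $X_0, X_1$ so that open covers of the product $X_0 \times X_1$ correspond to proper $G_\delta$-covers of (a compact space related to) $\Fine(\pkl)$. Concretely, I would take the discrete set $D = \pkl$ and build $X_0$ and $X_1$ as suitable refinements of $\gamma D$ (the non-$\om_1$-complete ultrafilters with the $G_\delta$-style topology from proper $G_\delta$-covers), using the Cohen generic to split the base set $D$ into the countable partitions that define the basic $G_\delta$ pieces. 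The point of passing to a product of two spaces, rather than a single $G_\delta$-modification, is that an open rectangle $O_0 \times O_1$ can encode a \emph{Boolean combination} of the partition-cells, which is exactly what one needs so that a small subcover of the product would yield a small proper $G_\delta$-cover, contradicting Proposition \ref{2.5++}.

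First I would fix the enumeration and the partition data: since there is no $\om_1$-complete fine ultrafilter over $\pkl$, every fine ultrafilter $U$ admits a countable partition of $\pkl$ none of whose cells lies in $U$, so $\Fine(\pkl)$ carries a proper $G_\delta$-cover. Next I would use the Cohen forcing $\bbC$ to add generic objects indexing such partitions — this is where the square enters, because the generic filter lets me define two topologies on the same (or paired) underlying point set so that the diagonal-type cover of $X_0 \times X_1$ refines, one-to-one, the proper $G_\delta$-cover of $\Fine(\pkl)$. I would then verify the three topological requirements separately: that $X_0$ and $X_1$ are each \emph{Lindel\"of} (each should be a Lindel\"of refinement because $\gamma D$ sits inside a compact space and the Cohen-generic $G_\delta$ structure is countably generated at each point), that they are \emph{regular $T_1$} (inherited from the Stone space $\beta D$ together with the standard Alexandroff-duplicate/retopologization tricks, as in Gorelic \cite{G} and in Lemma \ref{2.9++}), and finally that $L(X_0 \times X_1) \ge \ka$.

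The Lindel\"of degree bound is the crux. I would argue by contradiction: suppose $\calO$ is an open cover of $X_0 \times X_1$ with a subcover of size $<\ka$; after shrinking each covering rectangle to a basic open rectangle, the induced family on $\Fine(\pkl)$ is a proper $G_\delta$-cover — here I must check that the rectangles genuinely \emph{cover} the relevant copy of $\Fine(\pkl)$ and that the countable-partition (proper) form is preserved under the product-to-single-space translation — and then Proposition \ref{2.5++} supplies the contradiction, since it forbids a proper $G_\delta$-cover of size $<\ka$.

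The hard part will be this last translation step: ensuring that the product topology on $X_0 \times X_1$ is \emph{exactly} rich enough that small product-covers descend to small proper $G_\delta$-covers, yet \emph{coarse} enough that each factor remains Lindel\"of in the Cohen extension. The delicate balance is that Lindel\"ofness of a single factor must survive (so the factors cannot themselves secretly re-encode the full $G_\delta$-modification of a compact space, which Corollary \ref{3.6} would bound by an $\om_1$-strongly compact cardinal rather than by $\ka$), while the \emph{product} fails Lindel\"of below $\ka$. Verifying that the Cohen generic partitions achieve precisely this, and that no unforeseen small subcover of the product slips through via mixed rectangles that do not come from a single partition, is where I expect the real work to lie.
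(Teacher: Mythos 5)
Your blueprint follows the same route as the paper's actual proof: there the two spaces are indeed two topologies on the \emph{same} underlying set $\Fine(\pkl)^V$, defined from the Cohen generic, and the lower bound comes from a diagonal argument plus Proposition \ref{2.5++}. But as a proof your proposal has genuine gaps at exactly the two places you defer as ``the real work.'' First, the splitting mechanism: what the generic splits is not the set $D$ but the index set $\om$ of the cells of the \emph{ground-model} partitions $\calA_\alpha=\{A^\alpha_n \mid n<\om\}$. Setting $a=\{n<\om \mid \bigcup G(n)=0\}$ and $b=\{n<\om \mid \bigcup G(n)=1\}$, the topology of $X_0$ is generated by the sets $W^{a}_{A,\vec{\alpha}}=\{U \in \Fine(D)^V \mid A \in U,\ A^{\alpha_i}_n \notin U \text{ for all } n \in a \text{ and all entries } \alpha_i \text{ of } \vec{\alpha}\}$, and $X_1$ uses $b$ in place of $a$. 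The entire diagonal argument rests on the complementarity $a \cup b=\om$: a diagonal point $\seq{U,U}$ lies in $W^{a}_{D,\seq{\alpha}} \times W^{b}_{D,\seq{\alpha}}$ if and only if $U \in Z_\alpha$, and this is what makes a subcover of the product of size $<\ka$ descend to a proper $G_\delta$-cover of size $<\ka$. Even then you must pull the index set of that small subcover back into $V$ using the ccc of $\bbC$ (and the uncountability of $\ka$), since Proposition \ref{2.5++} is a statement about $V$; your sketch skips this absoluteness step entirely.

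Second, and more seriously, your justification for the Lindel\"ofness of each factor --- that $\gamma D$ sits inside a compact space and the generic $G_\delta$ structure is ``countably generated at each point'' --- is not a valid argument: those features hold equally for the topology obtained by taking $a=\om$ in the ground model, and that topology admits $\{Z_\alpha \mid \alpha<\mu\}$ as an open cover with no subcover of size $<\ka$ (by Proposition \ref{2.5++}), so it is not Lindel\"of. Lindel\"ofness of $X_0$ is a forcing-theoretic fact, not a topological one. The paper's Claim \ref{4.3} proves it by working densely in $V$: take a countable $M \prec H(\theta)$, interpret the basic open sets named in the cover by finite conditions $q$ (each interpretation $W^{q}_{A,\vec{\alpha}}$ is a genuine open subset of the compact space $\Fine(D)$), use compactness to extract a finite subcover lying in $M$, and then extend $q$ to a condition $r$ that pads every new coordinate with the value $1$, so that $\dot a$ gains no new elements below $\dom(r)$ and $r \Vdash_\bbC$``$U_0 \in W^{\dot a}_{A,\vec{\alpha}}$''. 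This ``pad with the opposite value'' trick is precisely why genericity makes each half Lindel\"of while the diagonal recombines the two halves into the full, non-Lindel\"of partition constraints. Without it, your construction is a correct outline of the paper's strategy but not a proof.
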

\begin{proof}
As in the proof of Proposition \ref{2.5++}, we use $\Fine(\pkl)$.
In $V$, let $D=\pkl$, and 
fix a proper $G_\delta$-cover $\{Z_\alpha \mid \alpha<\mu\}$ of $\Fine(D)$.
For $\alpha<\mu$, fix a countable partition $\calA_\alpha=\{A^\alpha_n \mid n<\om\}$
of $D$ with $Z_\alpha=\{U \in \Fine(D) \mid A^\alpha_n \notin U$ for $n<\om\}$.
%
%
%
Take a $(V, \bbC)$-generic $G$ and we work in $V[G]$.
Fix  $a \subseteq \om$.
We construct the space $\Fine(D)^V_a$ in $V[G]$ as the following manner.
The underlying set of $\Fine(D)^V_a$ is $\Fine(D)^V$,
the set of all fine ultrafilters over $D$ defined in $V$.
So every element $U$ of $\Fine(D)^V$ belongs to $V$ and is a fine ultrafilter over $D$ in $V$,
but it is not necessary that $U$ is a fine ultrafilter in $V[G]$.
The topology of $\Fine(D)^V_a$ is defined in $V[G]$ as follows:
For a set $A \subseteq D$ with $A \in V$ 
and a finite (possibly empty) sequence $\vec{\alpha}=\seq{\alpha_0,\dotsc, \alpha_k} \in \mu^{<\om}$,
let $W^a_{A,\vec{\alpha}}=\{U \in \Fine(D)^V \mid A \in U,
A^{\alpha_i}_n \notin U$ for every $n \in a$ and $i \le k\}$.
Then the topology of $\Fine(D)^V_a$ is generated by
the $W^a_{A, \vec{\alpha}}$'s as an open base.
Note that if $O \subseteq \Fine(D)$ is open in $\Fine(D)$ in $V$,
then $O$ is an open set of $\Fine(D)^V_a$.
We can check that $\Fine(D)^V_a$ is $T_1$ and zero-dimensional, hence is  regular $T_1$.

\begin{claim}\label{4.3}
Let $a=\{n<\om \mid \bigcup G(n)=0\}$.
Then, in $V[G]$, $\Fine(D)^V_a$ is Lindel\"of.
\end{claim}
\begin{proof}
We work in $V$.
Let $\dot a$ be a name for $a$.
Take $p \in \bbC$, and a name $\dot \calO$ for an open cover of $\Fine(D)^V_{\dot a}$.
We show that 
$p \Vdash_\bbC$``$\dot \calO$ has a countable subcover''.

We may assume that 
$p \Vdash_\bbC$``every $W \in \dot \calO$ is of the form $W^{\dot a}_{A, \vec{\alpha}}$ for some
$A$ and $\vec{\alpha}$\,''.
Let $\theta$ be a sufficiently large regular cardinal,
and take a countable elementary submodel $M \prec H(\theta)$ which contains all relevant objects.
We show that $p \Vdash_\bbC$``$\{W^{\dot a}_{A,\vec{\alpha}} \in \dot \calO \mid \seq{A,\vec{\alpha}} \in M\}$
covers $\Fine(D)^V_{\dot a}$''.
To show this,
take $p_0 \le p$ and $U_0 \in \Fine(D)$.
$p_0$ belongs to $M$.
Let $\calF$ be the set of all
pairs $\seq{A,  \vec{\alpha}}$ such that
there is some $q \le p_0$ with
$q \Vdash_\bbC$``$W_{A,\vec{\alpha}}^{\dot a} \in \dot \calO$''.
For a pair $\seq{A,\vec{\alpha}} \in \calF$
and 
$q \le p_0$ with
$q \Vdash_\bbC$``$W_{A,\vec{\alpha}}^{\dot a} \in \dot \calO$'',
let $x=\{n \in \dom(q) \mid q(n)=0\}$ and 
$W^q_{A,\vec{\alpha}}=
W^x_{A, \vec{\alpha}}$.
$W^q_{A,\vec{\alpha}}$ is open in $\Fine(D)$.
Let $\calV=\{W^{q}_{A, \vec{\alpha}} \mid q \le p_0, \seq{A,  \vec{\alpha}} \in \calF,
q \Vdash_\bbC$``$W_{A, \vec{\alpha}}^{\dot a} \in \dot \calO$''$\}$.
We have $\calV \in M$, and $\calV$ is a family of open sets in $\Fine(D)$.
Now we check that $\calV$ is an open cover of $\Fine(D)$.
To see this, take $U \in \Fine(D)$.
Then $p_0 \Vdash_\bbC$``$U \in \bigcup \dot \calO$'',
hence there is $\seq{A, \vec{\alpha}} \in \calF$ and $q \le p_0$ such that
$q \Vdash_\bbC$``$U \in W_{A, \vec{\alpha}}^{\dot a} \in \dot \calO$''.
Clearly $A \in U$.
Let $\vec{\alpha}=\seq{\alpha_0,\dotsc, \alpha_k}$.
For $n \in \dom(q)$, 
we can see that if 
$q(n)=0$ then $A^{\alpha_i}_n \notin U$ for every $i \le k$;
Since $q(n)=0$, we have $q \Vdash_{\bbC}$``$n \in \dot a$''.
We know $q \Vdash_{\bbC}$``$U \in W^{\dot a}_{A, \vec{\alpha}}$'', 
which means that $A^{\alpha_i}_n \notin U$.
Now we know $A \in U$ and if $q(n)=0$ then $A^{\alpha_i}_n \notin U$.
Thus $U \in W^{q}_{A, \vec{\alpha}} \in \calV$.
%
%
%

Since $\Fine(D)$ is compact, there is a finite subcover $\calV' \subseteq \calV$ of $\Fine(D)$.
Because $\calV \in M$, we may assume that $\calV' \in M$, and we have $\calV' \subseteq M$.
Take $W_{A,\vec{\alpha}}^q \in \calV'$ with $U_0 \in 
W_{A, \vec{\alpha}}^q$. We know $\seq{A, \vec{\alpha}}, q \in M$.
Let $\vec{\alpha}=\seq{\alpha_0,\dotsc, \alpha_k}$.
For each $i \le k$, there is at most one $n<\om$ with
$A^{\alpha_i}_n \in U_0$.
Hence there is some large $n_0>\dom(q)$
such that $\{n<\om \mid A^{\alpha_i}_n \in U$ for some $i \le k\} \subseteq n_0$.

Again, since $U_0 \in W^{q}_{A, \vec{\alpha}}$,
we know that for $n \in \dom(q)$ if $q(n)=0$ then
$A^{\alpha_i}_n \notin U_0$ for every $i \le k$.
Now define $r \le q$ by $\dom(r)=n_0$ and
$r(m)=1$ for every $\dom(q) \le m<n_0$.
Then $r \Vdash_{\bbC}$``$\dot a \cap n_0=\{n \in \dom(q) \mid q(n)=0\}$'',
so $r \Vdash_{\bbC}$``$U_0 \in W^{\dot a}_{A, \vec{\alpha}} \in \dot \calO$'',
as required.
\end{proof}

By swapping $0$ and $1$, we can prove the following by the same argument:
\begin{claim}
Let $b=\{n<\om \mid \bigcup G(n)=1\}$.
Then, in $V[G]$, $\Fine(D)^V_b$ is Lindel\"of.
\end{claim}

Let $a=\{n<\om \mid \bigcup G(n)=0\}$ and
$b=\{n<\om \mid \bigcup G(n)=1\}$.
By the claims before, we have that $\Fine(D)^V_a$ and $\Fine(D)^V_b$ are Lindel\"of.

\begin{claim}
$L(\Fine(D)^V_a \times \Fine(D)^V_b) \ge \ka$.
\end{claim}
\begin{proof}
Let $\Delta$ be the diagonal of
$\Fine(D)^V_a \times \Fine(D)^V_b$.
Since $\Fine(D)^V_a$ and $\Fine(D)^V_b$ are Hausdorff,
$\Delta$ is closed in
$\Fine(D)^V_a \times \Fine(D)^V_b$.

For $\alpha<\mu$,
let $W_\alpha=W^a_{D, \seq{\alpha}} \times W^b_{D, \seq{\alpha}}$.
$W_\alpha$ is open in $\Fine(D)^V_a \times \Fine(D)^V_b$.
Let $\calW=\{W_\alpha \mid \alpha<\mu\}$.
We check that $\calW$ is an open cover of $\Delta$ but has no subcover of size $<\ka$.

First we note the following:
For every $U \in \Fine(D)^V$ and $\alpha<\mu$,
$\seq{U,U} \in W_\alpha$ if and only if
$U \in Z_\alpha$.
If $\seq{U, U} \in W_\alpha$,
then $A^\alpha_n \notin U$ for every $n \in a \cup b$.
Since $a \cup b=\om$, we know $A^\alpha_n \notin U$ for every $n<\om$,
and $U \in Z_\alpha$.
For the converse, if $U \in Z_\alpha$,
then it is clear that $U \in W^a_{D, \seq{\alpha}} \cap W^b_{D, \seq{\alpha}}$,
so $\seq{U, U} \in W^a_{D, \seq{\alpha}} \times W^b_{D, \seq{\alpha}}=W_\alpha$.

To show that $\calW$ is an open cover of $\Delta$,
take $U \in \Fine(D)^V$.
Because $\{Z_\alpha \mid \alpha<\mu\}$ is a cover of
$\Fine(D)^V$, there is $\alpha<\mu$
with $U \in Z_\alpha$.
Then $\seq{U, U} \in W_\alpha$ by the remark above.

Next we prove that $\calW$ has no subcover of size $<\ka$.
If not, then there is $E \in [\mu]^{<\ka}$
such that $\{W_\alpha \mid \alpha \in E\}$ forms a cover.
Since $\bbC$ satisfies the countable chain condition and $\ka>\om$,
we may assume that $E \in V$.
Then, by the remark above, we have that
$\{Z_\alpha \mid \alpha \in E\}$ is a proper $G_\delta$-cover of
$\Fine(D)^V$ of size $<\ka$,
this contradicts Proposition \ref{2.5++}.
\end{proof}
\end{proof}

\begin{lemma}\label{lem4.1}
Let $X_0$ and $X_1$ be topological spaces, and
$Y=X_0 \oplus X_1$, the topological sum of $X_0$ and $X_1$.
Then $L(X_0 \times X_1) \le L(Y^2)$.
\end{lemma}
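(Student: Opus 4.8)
The plan is to exhibit $X_0 \times X_1$ as a clopen subspace of $Y^2$ and then to invoke the monotonicity of the Lindel\"of degree under closed subspaces.

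First I would record the structure of $Y^2$. Since $X_0$ and $X_1$ are clopen subspaces of their topological sum $Y=X_0\oplus X_1$, the square $Y^2=Y\times Y$ decomposes as the topological sum
\[
Y^2=\bigoplus_{i,j\in\{0,1\}} (X_i\times X_j),
\]
and each summand $X_i\times X_j$ is a clopen subspace of $Y^2$. (Concretely, $X_i\times X_j=\pi_1^{-1}(X_i)\cap\pi_2^{-1}(X_j)$ is an intersection of two clopen sets, using that the projections are continuous and that $X_0,X_1$ are clopen in $Y$.) In particular $X_0\times X_1$ is clopen, hence closed, in $Y^2$.

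Second, I would use the general fact that $L(C)\le L(W)$ whenever $C$ is a closed subspace of a space $W$. The brief argument is as follows: given an open cover $\calO$ of $C$ in its subspace topology, choose for each $V\in\calO$ an open $U_V\subseteq W$ with $U_V\cap C=V$; then $\{U_V\mid V\in\calO\}\cup\{W\setminus C\}$ is an open cover of $W$, so it admits a subcover of size $\le L(W)$, and restricting the surviving $U_V$'s back to $C$ yields a subcover of $\calO$ of size $\le L(W)$. For the clopen case at hand the argument is even more direct, since every open subset of $X_0\times X_1$ is already open in $Y^2$ and $W\setminus C$ is open.

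Applying this with $C=X_0\times X_1$ and $W=Y^2$ gives $L(X_0\times X_1)\le L(Y^2)$, as desired. There is no substantial obstacle here: the only point requiring care is the verification that $X_0\times X_1$ is clopen (hence closed) in $Y^2$, which is immediate from the topological-sum decomposition above, after which the closed-subspace monotonicity of $L$ finishes the proof.
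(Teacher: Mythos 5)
Your proof is correct and follows essentially the same route as the paper: decompose $Y^2$ into the four clopen summands $X_i\times X_j$ and use that $X_0\times X_1$, being clopen (hence closed) in $Y^2$, satisfies $L(X_0\times X_1)\le L(Y^2)$. The paper states this decomposition and conclusion tersely; you have merely supplied the routine closed-subspace monotonicity argument that the paper leaves implicit.
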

\begin{proof}
$Y^2$ can be identified with the disjoint union of $X_0^2$, $X_1^2$, $X_0 \times X_1$, and $X_1 \times X_0$.
These are clopen sets in $Y^2$,
hence $L(X_0 \times X_1) \le L(Y^2)$.
\end{proof}

\begin{lemma}\label{3.6++}
Let $X_0$ and $X_1$ be topological spaces.
Then $L(X_0 \times X_1)=wL(\bbA(X_0) \times \bbA(X_1))$.
\end{lemma}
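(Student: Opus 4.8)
The plan is to prove the two inequalities $L(X_0\times X_1)\le wL(\bbA(X_0)\times\bbA(X_1))$ and $wL(\bbA(X_0)\times\bbA(X_1))\le L(X_0\times X_1)$ separately, via a correspondence between open covers analogous to the one in Lemma~\ref{2.9++}. Write $P=\bbA(X_0)\times\bbA(X_1)$, let $D_i=X_i\times\{0\}$ be the open discrete set of isolated points of $\bbA(X_i)$, and put $M=(X_0\times\{1\})\times(X_1\times\{1\})$. Recalling that a basic open set of $\bbA(X_i)$ is either a singleton $\{\seq{x,0}\}$ or of the form $(O\times\{0,1\})\setminus F$ with $O$ open in $X_i$ and $F\subseteq D_i$ finite, one checks that $M$ is homeomorphic to $X_0\times X_1$: a basic rectangle meets $M$ exactly when both factors are of the second kind, and then its trace on $M$ is the product of the two open sets. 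The set $I:=D_0\times D_1$ consists of isolated points of $P$, so every dense subset of $P$ contains $I$; this is the device converting weak covers into covers.

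For $L(X_0\times X_1)\le wL(P)$, put $\mu=wL(P)$ and let $\calO$ be an open cover of $X_0\times X_1$, which we may take to consist of basic rectangles $U\times V$. Pulling back along the projection $P\to X_0\times X_1$, the family $\widehat{\calO}=\{(U\times\{0,1\})\times(V\times\{0,1\})\mid U\times V\in\calO\}$ is an open cover of $P$. Choose $\widehat{\calO}'\subseteq\widehat{\calO}$ of size $\le\mu$ with dense union. Since $\bigcup\widehat{\calO}'$ is dense it contains $I$, so for each $\seq{x_0,x_1}$ the isolated point $\seq{\seq{x_0,0},\seq{x_1,0}}$ lies in some $\widehat{U\times V}\in\widehat{\calO}'$, forcing $x_0\in U$ and $x_1\in V$. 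Hence $\{U\times V\mid \widehat{U\times V}\in\widehat{\calO}'\}$ covers $X_0\times X_1$ and has size $\le\mu$; as $\calO$ was arbitrary, $L(X_0\times X_1)\le\mu$.

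For the reverse inequality $wL(P)\le\kappa$, where $\kappa=L(X_0\times X_1)$, let $\calW$ be an open cover of $P$, which we may assume consists of basic rectangles $W=W_0\times W_1$; for a ``thick'' factor write $W_i=(O_i^W\times\{0,1\})\setminus F_i^W$. First restrict $\calW$ to $M\cong X_0\times X_1$ and extract $\calW_A\subseteq\calW$ of size $\le\kappa$ whose traces cover $M$; set $S_i=\bigcup_{W\in\calW_A}F_i^W$, so $\size{S_i}\le\kappa$. A direct check shows that $\bigcup\calW_A$ already meets every nonempty basic rectangle except possibly those concentrated on a slice $\{s\}\times\bbA(X_1)$ with $s\in S_0$, or $\bbA(X_0)\times\{t\}$ with $t\in S_1$: off $S_0$ and $S_1$ the finite exclusion sets never interfere, so the covering of $M$ propagates to all four pieces of $P$. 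To repair the remaining slices, note that each such slice is open in $P$ and homeomorphic to $\bbA(X_{1-i})$; using the single-duplicate bound $wL(\bbA(Y))\le L(Y)$ together with $L(X_{1-i})\le L(X_0\times X_1)=\kappa$, I cover each of these $\le\kappa$ slices by $\le\kappa$ members of $\calW$ with union dense in the slice. The resulting subfamily has size $\le\kappa$, and a case analysis on the thin/thick type of a basic rectangle shows its union is dense in $P$, so $wL(P)\le\kappa$.

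The main obstacle is this reverse inequality, specifically the control of isolated points of the product. Unlike the single duplicate in Lemma~\ref{2.9++}, where the isolated points missed by a main-copy subcover form a small set, here they can fill whole slices $\{s\}\times D_1$ and $D_0\times\{t\}$ of size up to $\size{X_1}$ and $\size{X_0}$. The key is that only \emph{density} is needed, not covering: hence only the $\le\kappa$ slices indexed by $S_0\cup S_1$ require separate treatment, and each is an open copy of a single Alexandroff duplicate whose weak Lindel\"of degree is bounded by $L(X_{1-i})\le\kappa$. I would isolate the auxiliary fact $wL(\bbA(Y))\le L(Y)$ (proved exactly as the easy half of Lemma~\ref{2.9++}, since the isolated points of a single duplicate missed by a subcover of the main copy form a set of size $\le L(Y)$) as a preliminary step, and I expect the bookkeeping in the final density verification to be the most delicate part.
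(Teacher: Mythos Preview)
Your proof is correct and follows the approach the paper indicates (the paper merely says the proof is similar to that of Lemma~\ref{2.9++}). You rightly identify and handle the extra subtlety in the product case---that after covering $M\cong X_0\times X_1$ one is left with $\le\kappa$ bad slices $\{s\}\times\bbA(X_1)$ and $\bbA(X_0)\times\{t\}$, each disposed of via $wL(\bbA(X_{1-i}))\le L(X_{1-i})\le\kappa$---a step the paper's terse reference glosses over.
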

\begin{proof}
The proof is similar to of Lemma \ref{2.9++}.
\end{proof}

The following follows from well-known arguments, e.g., see Proposition 10.15 in Kanamori \cite{Ka}.
\begin{lemma}\label{4.5}
Let $\ka$ be an uncountable cardinal,
and $\bbC$ the Cohen forcing notion.
Then $\ka$ is $\om_1$-strongly compact if and only if 
$\Vdash_\bbC$``
$\ka$ is $\om_1$-strongly compact''.
\end{lemma}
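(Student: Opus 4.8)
The plan is to reduce everything to the fine-ultrafilter characterization of Fact~\ref{2.2++}(1) and then run a Lévy--Solovay argument in both directions, exploiting that $\bbC=2^{\fs}$ is countable. First I would record the smallness facts that drive the proof: $\bbC$ is countable and satisfies the countable chain condition, so it preserves cardinals and cofinalities, and any cardinal $\ka$ that is $\om_1$-strongly compact in either $V$ or $V[G]$ exceeds the least measurable, which is inaccessible; in particular $2^{\aleph_0}<\ka$, and every countably complete (nonprincipal) ultrafilter has ultrapower embedding with critical point $\ge\aleph_1>\om=\size{\bbC}$. Thus $\bbC$ is ``small'' relative to the relevant embeddings. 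It then suffices to show, for each cardinal $\la\ge\ka$, that an $\om_1$-complete fine ultrafilter over $\pkl$ exists in $V$ if and only if one exists in $V[G]$ over the $V[G]$-version of $\pkl$.

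Forward direction ($V\Rightarrow V[G]$). Fix $\la\ge\ka$, a cardinal in $V$ and hence in $V[G]$. In $V$ take an $\om_1$-complete fine ultrafilter $U$ over $\pkl$ and form the ultrapower $j\colon V\to M=\ult(V,U)$; since $U$ is countably complete $M$ is well-founded, and with $D=[\mathrm{id}]_U$ we have $j''\la\subseteq D\in M$ and $\size{D}^M<j(\ka)$. Because $\crit(j)>\size{\bbC}$ and $\bbC=2^{\fs}$ is absolutely defined, $j(\bbC)=\bbC$ and $j\restriction\bbC=\mathrm{id}$, so $G$ is $M$-generic and $j$ lifts to an elementary $\hat j\colon V[G]\to M[G]$ with $M[G]$ transitive. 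The derived family $\bar U=\{X\in V[G]\mid X\subseteq(\pkl)^{V[G]},\ D\in\hat j(X)\}$ is a fine ultrafilter over $(\pkl)^{V[G]}$ in $V[G]$, and it is $\om_1$-complete: given $\seq{X_n\mid n<\om}\in V[G]$ with each $X_n\in\bar U$, the bound $\crit(\hat j)>\om$ yields $\hat j(\bigcap_n X_n)=\bigcap_{n<\om}\hat j(X_n)$ computed in the transitive model $M[G]$, whence $D\in\hat j(\bigcap_n X_n)$. By Fact~\ref{2.2++}, $\ka$ is $\om_1$-strongly compact in $V[G]$.

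Converse direction ($V[G]\Rightarrow V$). Fix $\la\ge\ka$ in $V$ and, in $V[G]$, an $\om_1$-complete fine ultrafilter $W$ over $(\pkl)^{V[G]}$; let $\hat j\colon V[G]\to N$ be its well-founded ultrapower, with witness $\hat j''\la\subseteq D\in N$, $\size{D}^N<\hat j(\ka)$. Again $\crit(\hat j)>\om=\size{\bbC}$, so the standard Lévy--Solovay factorisation applies: $\hat j(G)=G$, the image $M:=\hat j(V)$ is an inner model with $N=M[G]$, and $j:=\hat j\restriction V\colon V\to M$ is elementary. Using that $\bbC$ is ccc and $\hat j(\ka)>\om$, I can shrink $D$ to a covering set lying in $M$, so that for $X\in V$ the value $\hat j(X)=j(X)\in M$ and $D\in j(X)$ is evaluated inside $M$. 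The smallness of $\bbC$ then forces the restriction to be captured in $V$: the derived ultrafilter $U=\{X\in V\mid X\subseteq(\pkl)^V,\ D\in\hat j(X)\}$ lies in $V$ and is an $\om_1$-complete fine ultrafilter over $(\pkl)^V$ there, so by Fact~\ref{2.2++} $\ka$ is $\om_1$-strongly compact in $V$.

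The main obstacle is that $\bbC$ adds new reals, hence new countable subsets of $\la$, so $(\pkl)^{V[G]}$ strictly extends $(\pkl)^V$; a ground-model ultrafilter need not even remain a filter over the larger index set, and conversely a generic ultrafilter could a priori concentrate on the new points. Both difficulties are dissolved by the single fact that any countably complete ultrapower has critical point above $\size{\bbC}=\om$: this makes the forward embedding lift with $j\restriction\bbC=\mathrm{id}$ (so countable sequences are fixed and $\om_1$-completeness transfers to $\bar U$), and it is exactly the hypothesis for the Lévy--Solovay reflection giving $\hat j\restriction V\in V$ in the converse. Verifying these two uses of $\crit>\size{\bbC}$ --- rather than any new combinatorics --- is where the content lies, and they are precisely the ``well-known arguments'' referred to above.
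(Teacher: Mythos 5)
Your forward direction is essentially correct and is the standard L\'evy--Solovay lifting: a fine ultrafilter over $\pkl$ is automatically nonprincipal, so its ultrapower has critical point $>\om$, hence $j(\bbC)=\bbC$, $j\restriction\bbC=\mathrm{id}$, $G$ remains $M$-generic, $j$ lifts to $\hat j\colon V[G]\to M[G]$, and the ultrafilter derived from $D=[\mathrm{id}]_U$ is $\om_1$-complete and fine in $V[G]$. The converse, however, has a genuine gap at exactly its key step. You assert that ``the derived ultrafilter $U=\{X\in V\mid X\subseteq(\pkl)^V,\ D\in\hat j(X)\}$ lies in $V$,'' and in your closing paragraph that L\'evy--Solovay ``reflection'' gives $\hat j\restriction V\in V$. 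Neither claim is proved, and neither is true in general: $\hat j$, $D$, and hence $U$ are defined in $V[G]$ from the parameter $W\in V[G]$, and a collection of $V$-sets defined from such parameters need not belong to $V$ --- compare $G$ itself, every element of which lies in $V$. (There is also a technical slip: the definition must use the covering set $D'\in M$, not $D$; if $D\notin M$ then $D\notin j(X)$ for \emph{every} $X\in V$, since $j(X)\in M$ and $M$ is transitive, so the displayed $U$ contains no sets at all.) The classical argument the paper cites never shows that the derived ultrafilter or the restricted embedding is in $V$; instead it fixes a name $\dot W$ and proves that for some condition $p$ the set $U_p=\{X\in V\mid p\Vdash \check X\in\dot W\}$ --- which is in $V$ because the forcing relation is definable in $V$ --- is already an ultrafilter. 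That existence proof is a counting argument: if every $p\in\bbP$ had an undecided set $X_p$, the partition generated by $\{X_p\mid p\in\bbP\}$ has at most $2^{\size{\bbP}}<\ka$ pieces, and $\ka$-completeness puts one piece in $W$, yielding a condition that decides its own $X_p$. For $\om_1$-strong compactness this counting breaks: $W$ is only $\om_1$-complete, while the countably many sets $X_p$ ($p\in\bbC$) generate a partition into as many as $2^{\aleph_0}$ pieces.

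So the converse requires an extra idea, not just ``smallness,'' and this is where the real content of the lemma lies. One correct route: first push $W$ down to an $\om_1$-complete fine ultrafilter $W'$ over $(\pkl)^V$ in $V[G]$, using a map $\pi$ with $a\subseteq\pi(a)\in(\pkl)^V$ given by ccc covering (this replaces your covering of $D$, and is needed because $W$ may concentrate on new sets). Now suppose every $U_p$ computed from a name $\dot W'$ fails to be an ultrafilter, with counterexamples $X_p$ for $p\in\bbC$; code them by a single function $f\colon(\pkl)^V\to(2^\om)^V$ in $V$, where $f(a)(n)=1$ iff $a\in X_{p_n}$. The push-forward $f_*W'$ is an $\om_1$-complete ultrafilter in $V[G]$ over a set of size at most $(2^{\aleph_0})^{V[G]}$, which is below the least measurable cardinal of $V[G]$ (such a cardinal exists since $\ka$ is $\om_1$-strongly compact there); by the fact quoted in the paper's introduction it is therefore principal, generated by some $c\in(2^\om)^V\subseteq V$. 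Then $T=f^{-1}(\{c\})$ is in $V$ and in $W'$, and any condition of $G$ forcing $\check T\in\dot W'$ decides ``$\check X_{p_n}\in\dot W'$'' for every $n$, in particular for its own index --- contradicting the choice of the $X_p$. Hence some $U_p$ is an $\om_1$-complete fine ultrafilter over $(\pkl)^V$ in $V$, which is what Fact~\ref{2.2++} requires. The essential uses of the countability of $\bbC$ and of ``no measurable cardinal $\le 2^{\aleph_0}$'' in this step are precisely what your appeal to smallness of $\bbC$ does not supply.
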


%
%

Combining Proposition \ref{4.1} and Lemmas 
\ref{lem4.1}, \ref{3.6++}, \ref{4.5},
we have Theorem \ref{thm4}:
\begin{cor}\label{4.6}
$\bbC$ forces the following statement:
For every uncountable cardinal $\ka$, 
$\ka$ is $\om_1$-strongly compact if and only if
$wL(X^2) \le \ka$ 
for every regular $T_1$ Lindel\"of space $X$.

\end{cor}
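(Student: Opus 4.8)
The plan is to prove the forced biconditional by treating its two directions separately, in each case reducing to results already established. Since $\om_1$-strong compactness is $\bbC$-absolute by Lemma \ref{4.5}, and $\bbC$ is ccc and hence cardinal-preserving, I may freely transport the relevant statements between $V$ and $V[G]$; all of the argument below takes place in one of these two models.

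For the forward direction I would work entirely inside $V[G]$. Assume $\ka$ is $\om_1$-strongly compact there, and let $X$ be a regular $T_1$ Lindel\"of space. Then $X^2$ is a product of Lindel\"of spaces, so by Bagaria and Magidor's theorem \cite{BM2} (quoted in the introduction) every open cover of $X^2$ has a subcover of size $<\ka$; hence $L(X^2) \le \ka$. Because the weak Lindel\"of degree never exceeds the Lindel\"of degree, $wL(X^2) \le L(X^2) \le \ka$, which is what is wanted. This direction requires nothing beyond the ambient ZFC theorem.

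For the reverse direction I would prove the contrapositive. Suppose $\ka$ is not $\om_1$-strongly compact; by Lemma \ref{4.5} this already holds in $V$, so I argue there. I first note that $\ka^+$ is not $\om_1$-strongly compact either: if it were, then since the least $\om_1$-strongly compact cardinal is a limit cardinal by Fact \ref{2.2++}, $\ka^+$ would fail to be least, so some $\delta \le \ka$ would be $\om_1$-strongly compact, whence $\ka$ itself would be by upward closure — a contradiction. Consequently Fact \ref{2.2++} yields a cardinal $\la \ge \ka^+$ carrying no $\om_1$-complete fine ultrafilter over $\p_{\ka^+}\la$ in $V$. Applying Proposition \ref{4.1} with $\ka^+$ in place of $\ka$, the forcing $\bbC$ produces, in $V[G]$, regular $T_1$ Lindel\"of spaces $X_0$ and $X_1$ with $L(X_0 \times X_1) \ge \ka^+$. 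It now remains only to fold this product into the square of a single space and to pass from $L$ to $wL$, which is precisely the combined effect of Lemmas \ref{lem4.1} and \ref{3.6++}: setting $Y = X_0 \oplus X_1$ (still regular $T_1$ Lindel\"of), Lemma \ref{lem4.1} gives $L(Y^2) \ge L(X_0 \times X_1) \ge \ka^+$, and then taking $X = \bbA(Y)$ (again regular $T_1$ Lindel\"of) and applying Lemma \ref{3.6++} with both factors equal to $Y$ gives $wL(X^2) = wL(\bbA(Y) \times \bbA(Y)) = L(Y^2) \ge \ka^+ > \ka$. Thus $X$ witnesses $wL(X^2) > \ka$, contradicting the assumed bound and forcing $\ka$ to be $\om_1$-strongly compact.

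I do not anticipate a genuine obstacle, since all the difficult content is packaged in Proposition \ref{4.1}; the assembly is routine once one is careful about two points. The main thing to watch is the threshold bookkeeping: Proposition \ref{4.1} must be invoked at $\ka^+$ rather than at $\ka$, so as to deliver the \emph{strict} inequality $wL(X^2) > \ka$ that the contrapositive demands, and this forces the small cardinal-arithmetic step promoting non-$\om_1$-strong-compactness from $\ka$ to $\ka^+$. The remaining verifications — that $\oplus$ and $\bbA$ preserve the class of regular $T_1$ Lindel\"of spaces, and that the same $\ka^+$ is computed in $V$ and $V[G]$ by ccc-ness — are immediate.
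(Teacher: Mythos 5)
Your proposal is correct and follows essentially the same route as the paper: Bagaria--Magidor plus $wL \le L$ for the forward direction, and for the converse the contrapositive via Lemma \ref{4.5}, promotion of non-$\om_1$-strong-compactness from $\ka$ to $\ka^+$, Proposition \ref{4.1} at $\ka^+$, and then Lemmas \ref{lem4.1} and \ref{3.6++} to pass from $X_0 \times X_1$ to $wL(\bbA(Y)^2)$. The only difference is that you spell out the limit-cardinal/upward-closure argument for $\ka^+$ and the ccc cardinal-preservation point, which the paper leaves implicit.
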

\begin{proof}
Take a $(V, \bbC)$-generic $G$ and work in $V[G]$.
If $\ka$ is $\om_1$-strongly compact,
then $L(X^2) \le \ka$ for every Lindel\"of space $X$ by
Bagaria and Magidor's theorem mentioned in the introduction.

Suppose $\ka$ is not $\om_1$-strongly compact.
Then, by Lemma \ref{4.5}, $\ka$ is not $\om_1$-strongly compact in $V$.
We know that $\ka^+$ is not $\om_1$-strongly compact in $V$.
Hence there is a cardinal $\la \ge \ka^+$ such that
$\p_{\ka^+} \la$ cannot carry an $\om_1$-complete fine ultrafilter.
By Proposition \ref{4.1}, in $V[G]$,
there are regular $T_1$ Lindel\"of spaces $X_0$ and $X_1$
such that $L(X_0 \times X_1) \ge \ka^+$.
Applying Lemma \ref{lem4.1},
we can find a regular $T_1$ Lindel\"of space $Y$ with $L(Y^2) \ge \ka^+$.
Finally, by Lemma \ref{3.6++},
the space $\bbA(Y)$ is regular $T_1$ Lindel\"of but $wL(\bbA(Y)^2) \ge \ka^+$.
\end{proof}

Corollary \ref{4.6} is a consistency result.
So it is natural to ask the following:
\begin{question}
In ZFC, is the least $\om_1$-strongly compact cardinal
 the supremum of the (weak) Lindel\"of degrees of
the squares of  Lindel\"of spaces?
\end{question}

We can replace the square $X^2$ in Corollary \ref{4.6} by the cube $X^3$.
\begin{prop}\label{4.11}
Let $\ka$ be an uncountable cardinal and $\la \ge \ka$ a cardinal.
Suppose there is no $\om_1$-complete fine ultrafilter over $\pkl$.
Then $\bbC$ forces the following:
There are regular $T_1$ Lindel\"of spaces $X_0$, $X_1$, and $X_2$ such that
$X_i \times X_j$ is Lindel\"of for every $i, j<2$ but
$L(X_0 \times X_1 \times X_2) \ge \ka$.
\end{prop}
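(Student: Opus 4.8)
The plan is to adapt the construction in Proposition \ref{4.1}, replacing the two-valued partition of $\om$ coming from $2^{<\om}$ by a three-valued one. Since the Cohen forcing $\bbC$ is forcing-equivalent to $3^{<\om}$ (both are countable atomless separative posets), I may work with $\bbC=3^{<\om}$. As in Proposition \ref{4.1}, fix in $V$ a proper $G_\delta$-cover $\{Z_\alpha \mid \alpha<\mu\}$ of $\Fine(D)$ with $D=\pkl$, together with countable partitions $\calA_\alpha=\{A^\alpha_n \mid n<\om\}$ witnessing the $Z_\alpha$. Given a generic $G$, the generic function $g=\bigcup G \colon \om \to 3$ induces a partition $\om=a_0 \cup a_1 \cup a_2$ with $a_i=\{n<\om \mid g(n)=i\}$, and each $a_i$ is infinite by genericity. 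For $i<3$ define $\Fine(D)^V_{a_i}$ exactly as in Proposition \ref{4.1}, with basic open sets $W^{a_i}_{A,\vec\alpha}=\{U \in \Fine(D)^V \mid A \in U \text{ and } A^{\alpha_l}_n \notin U \text{ for all } n \in a_i,\, l \le k\}$ for $A\in V$, $A\subseteq D$, and $\vec\alpha=\seq{\alpha_0,\dotsc,\alpha_k}\in\mu^{<\om}$. As before each $\Fine(D)^V_{a_i}$ is regular $T_1$ and zero-dimensional.

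That each $\Fine(D)^V_{a_i}$ is Lindel\"of is proved verbatim as in Claim \ref{4.3}: given a name for an open cover and a countable $M \prec H(\theta)$, one collapses the generic index set $a_i$ to the finite set $\{n \in \dom(q) \mid q(n)=i\}$ to obtain an open cover $\calV \in M$ of the compact space $\Fine(D)$, extracts a finite subcover in $M$, and finally, to realize a chosen basic set at a given $U_0$, extends the relevant condition $q$ by assigning some value $\ne i$ to all fresh coordinates below a bound $n_0$ dominating the finitely many $n$ with $A^{\alpha_l}_n \in U_0$; this forces those coordinates out of $a_i$ and hence forces $U_0 \in W^{a_i}_{A,\vec\alpha}$.

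The heart of the proof, and the step that genuinely uses three values rather than two, is that $\Fine(D)^V_{a_i}\times\Fine(D)^V_{a_j}$ is Lindel\"of for all $i,j<3$ (the substantive case being $i\ne j$). I would run the product version of the argument above over the compact space $\Fine(D)\times\Fine(D)$. Given a name $\dot\calO$ for an open cover, a countable $M\prec H(\theta)$, a condition $p_0$, and a pair $\seq{U_0,U_0'}$, I cover $\Fine(D)\times\Fine(D)$ by the sets $W^{x_i}_{A,\vec\alpha}\times W^{x_j}_{B,\vec\beta}$, where $x_i=\{n\in\dom(q)\mid q(n)=i\}$ and $x_j=\{n\in\dom(q)\mid q(n)=j\}$, arising from conditions $q\le p_0$ forcing $W^{a_i}_{A,\vec\alpha}\times W^{a_j}_{B,\vec\beta}\in\dot\calO$; this family lies in $M$, so compactness yields a finite subcover in $M$. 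Choosing a member containing $\seq{U_0,U_0'}$, I must extend the associated $q$ to some $r$ forcing $U_0\in W^{a_i}_{A,\vec\alpha}$ and $U_0'\in W^{a_j}_{B,\vec\beta}$ simultaneously. The finitely many ``bad'' coordinates $n$ (those with $A^{\alpha_l}_n\in U_0$ or $B^{\beta_l}_n\in U_0'$) must be forced out of $a_i$ for the first factor and out of $a_j$ for the second. Below $\dom(q)$ the values of $q$ handle this exactly as in the single-space case; for the finitely many fresh bad coordinates I assign the \emph{third} value $k\notin\{i,j\}$, which puts them into neither $a_i$ nor $a_j$ at once. This spare value is precisely what a two-valued partition lacks, and is why the distinct-index pairwise products are Lindel\"of here whereas $\Fine(D)^V_a\times\Fine(D)^V_b$ was not in Proposition \ref{4.1}. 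I expect this step to be the main obstacle, since it is where the combinatorics of the three-valued generic must be arranged so that every pair of indices still leaves a neutral value to absorb the bad coordinates.

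Finally, $L(\Fine(D)^V_{a_0}\times\Fine(D)^V_{a_1}\times\Fine(D)^V_{a_2})\ge\ka$ follows exactly as the last claim of Proposition \ref{4.1}, now using the triple diagonal. Setting $W_\alpha=W^{a_0}_{D,\seq\alpha}\times W^{a_1}_{D,\seq\alpha}\times W^{a_2}_{D,\seq\alpha}$, one checks that $\seq{U,U,U}\in W_\alpha$ if and only if $A^\alpha_n\notin U$ for all $n\in a_0\cup a_1\cup a_2=\om$, i.e. if and only if $U\in Z_\alpha$. Hence $\{W_\alpha\mid\alpha<\mu\}$ covers the closed diagonal, and a subcover of size $<\ka$ would, by the countable chain condition of $\bbC$ together with $\ka>\om$, descend to a ground-model proper $G_\delta$-cover of $\Fine(D)^V$ of size $<\ka$, contradicting Proposition \ref{2.5++}.
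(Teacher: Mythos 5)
Your proposal is correct and follows essentially the same route as the paper's proof: identify $\bbC$ with $3^{<\om}$, define the three spaces $\Fine(D)^V_{a_i}$ from the generic three-valued partition of $\om$, prove pairwise Lindel\"ofness by extending conditions so that fresh coordinates receive the spare third value (the paper sets $r(m)=2$ in the case $i=0$, $j=1$), and run the diagonal/ccc argument against Proposition \ref{2.5++} for the triple product. The point you flag as the heart of the matter — that the third value absorbs the bad coordinates for both factors simultaneously — is exactly the mechanism the paper uses.
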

\begin{proof}
The proof can be obtained by the arguments in the proof of
Proposition \ref{4.1} with slight modifications, so we only sketch the proof.

In this proof, we identify $\bbC$ as $3^{<\om}$.
Take a $(V, \bbC)$-generic $G$.
In $V[G]$, let $a_i=\{n<\om\mid \bigcup G(n)=i \}$ for $i<3$.
We define $\Fine(D)^V_{a_i}$ for $i<3$ as in the proof of Proposition \ref{4.1}.
Each $\Fine(D)^V_{a_i}$ is regular $T_1$.

\begin{claim}
In $V[G]$, for every $i,j<3$, the product space $\Fine(D)^V_{a_i} \times \Fine(D)^V_{a_j}$ is Lindel\"of.
\end{claim}
\begin{proof}
We show only the case $i=0$ and $j=1$. Other cases follow from a similar proof.
We work in $V$.
Let $\dot a_0$ and $\dot a_1$ be names for $a_0$ and $a_1$ respectively.
Take $p \in \bbC$, and a name $\dot \calO$ for an open cover of $\Fine(D)^V_{\dot a_0} \times
\Fine(D)^V_{\dot a_1} $.
We show that $p \Vdash_\bbC$``$\dot \calO$ has a countable subcover''.

We may assume that 
$p \Vdash_\bbC$``every $W \in \dot \calO$ is of the form 
$W^{\dot a_0}_{A_0, \vec{\alpha}_0}\times
W^{\dot a_1}_{A_1, \vec{\alpha}_1}$ for some
$A_0, A_1$ and $\vec{\alpha}_0, \vec{\alpha}_1$\,''.
Let $\theta$ be a sufficiently large regular cardinal,
and take a countable $M \prec H(\theta)$ which contains all relevant objects.
As before we see that 
$p \Vdash_\bbC$``$\{W^{\dot a_0}_{A_0,\vec{\alpha}_0}
\times W^{\dot a_1}_{A_1, \vec{\alpha}_1} \in \dot \calO
 \mid \seq{A_0,\vec{\alpha}_0},
\seq{A_1, \vec{\alpha}_1}  \in M\}$
covers $\Fine(D)^V_{\dot a_0} \times \Fine(D)^V_{\dot a_1}$''.

Take $p_0 \le p$ and $\seq{U_0, U_1} \in \Fine(D)^V \times\Fine(D)^V$.
As before,
we can find $\seq{A_0, \vec{\alpha}_0}, \seq{A_1, \vec{\alpha}_1} \in M$
and $q \le p_0$
such that $\seq{U_0, U_1} \in W^q_{A_0, \vec{\alpha}_0} \times W^{q}_{A_1, \vec{\alpha}_1}$,
where, letting $\vec{\alpha}_i=\seq{\alpha_0,\dotsc, \alpha_{k_i}}$,
$W^{q}_{A_i, \vec{\alpha}_i}$ is the set $\{U \in \Fine(D)^V \mid A_i \in U,
A^{\alpha_j}_n \notin U$ for every $j \le k_i$ and $n \in \dom(q)$ with $q(n)=i\}$.
Then fix a large $n_0<\om$,
and define $r \le q$ by $\dom(r)=n_0$ and
$r(m)=2$ for every $\dom(q) \le m<n_0$.
Then $r \Vdash_{\bbC}$``$\seq{U_0, U_1}  \in W^{\dot a_0}_{A_0, \vec{\alpha}_0}
\times W^{\dot a_1}_{A_1, \vec{\alpha}_1} \in \dot \calO$'',
as required.
\end{proof}

Let $W_\alpha=W^{a_0}_{D, \seq{\alpha}} \times W^{a_1}_{D, \seq{\alpha}} \times W^{a_2}_{D,\seq{\alpha}}$.
$W_\alpha$ is open in $\Fine(D)^V_{a_0} \times \Fine(D)^V_{a_1} \times
\Fine(D)^V_{a_2}$.
As in the proof of Proposition \ref{4.1},
one can check that the family $\{W^\alpha \mid \alpha<\mu\}$ is an open cover of
the diagonal of $\Fine(D)^V_{a_0} \times \Fine(D)^V_{a_1} \times
\Fine(D)^V_{a_2}$ but has no subcover of size $<\ka$.
\end{proof}
\begin{lemma}\label{3.10}
Let $X_i$ ($i<3$) be spaces,
and $Y=X_0 \oplus X_1 \oplus X_2$.
\begin{enumerate}
\item If $X_i \times X_j$ is Lindel\"of for every $i,j<3$,
then $Y^2$ is Lindel\"of as well.
\item $L(X_0 \times X_1 \times X_2) \le L(Y^3)$.
\item $L(X_0 \times X_1 \times X_2)=wL(\bbA(X_0) \times \bbA(X_1) \times \bbA(X_2))$.
\end{enumerate}
\end{lemma}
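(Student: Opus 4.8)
The strategy is to reduce each of the three statements to the corresponding two-variable facts already established in Lemmas \ref{lem4.1} and \ref{3.6++}, using the same clopen-decomposition idea that proved Lemma \ref{lem4.1}. The key observation throughout is that for a topological sum $Y=X_0\oplus X_1\oplus X_2$, the power $Y^n$ decomposes as a disjoint union of clopen ``cells'' of the form $X_{i_1}\times\cdots\times X_{i_n}$, one for each $n$-tuple $\seq{i_1,\dotsc,i_n}\in 3^n$; since these cells are clopen, the Lindel\"of degree of $Y^n$ is the supremum of the Lindel\"of degrees of the cells, and an open cover of $Y^n$ splits into independent covers of each cell.

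For part (1), I would decompose $Y^2$ into its nine clopen cells $X_i\times X_j$ for $i,j<3$. Each such cell is Lindel\"of by hypothesis, and a finite (in fact countable) disjoint union of Lindel\"of spaces is Lindel\"of, since any open cover restricts to a cover of each clopen cell, each of which admits a countable subcover, and the union of these countably many countable subcovers is a countable subcover of $Y^2$. For part (2), I would single out the single clopen cell $X_0\times X_1\times X_2$ inside $Y^3$; because it is clopen, any open cover of $X_0\times X_1\times X_2$ of minimal size extends (by adjoining a cover of the complementary cells) without decreasing the size needed on that cell, exactly as in the proof of Lemma \ref{lem4.1}, giving $L(X_0\times X_1\times X_2)\le L(Y^3)$.

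For part (3), the cleanest route is to reduce directly to Lemma \ref{3.6++} rather than redo its proof. I would set $Z_0=X_0$ and treat $X_1\times X_2$ as a single factor, but this does not quite match the bilinear form of Lemma \ref{3.6++}; instead I would mimic the proof of Lemma \ref{2.9++}/\ref{3.6++} verbatim in the three-factor setting. The Alexandroff duplicate $\bbA(X_i)$ has the discrete, dense, isolated layer $X_i\times\{0\}$, so in $\bbA(X_0)\times\bbA(X_1)\times\bbA(X_2)$ the product of the three isolated layers is a dense set of isolated points; hence any subfamily of an open cover with dense union must actually cover that product of isolated layers. Projecting the $G_\delta$-parts of the basic open boxes to their $X_i$-coordinates converts a weakly-covering subfamily of $\bbA(X_0)\times\bbA(X_1)\times\bbA(X_2)$ into a genuine subcover of $X_0\times X_1\times X_2$ and conversely, controlling the count by the same cardinality estimate on the removed isolated points as in Lemma \ref{2.9++}.

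I expect part (3) to be the only step with any real content, and the main obstacle there is purely bookkeeping: tracking how the finitely many deleted isolated points in each basic neighborhood of $\bbA(X_i)$ combine across the three factors, so that the relevant complement has size at most that of the chosen subfamily. Since Lemma \ref{2.9++} already carries out exactly this accounting in one factor and the generalization to three factors is formally identical, I would simply state that the proof is the obvious adaptation of Lemma \ref{2.9++}, as the paper does for Lemma \ref{3.6++}.
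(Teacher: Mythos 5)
Parts (1) and (2) of your proposal are correct and are exactly the intended argument: $Y^n$ decomposes into clopen cells, so $Y^2$ is Lindel\"of because its nine cells are, and $L(X_0\times X_1\times X_2)\le L(Y^3)$ because $X_0\times X_1\times X_2$ is a clopen cell of $Y^3$, precisely as in Lemma \ref{lem4.1}. (The paper states Lemma \ref{3.10} without proof, so the only thing to check is whether your sketch of part (3) would really go through.)

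Part (3) contains a genuine gap, in the direction $wL(\bbA(X_0)\times\bbA(X_1)\times\bbA(X_2))\le L(X_0\times X_1\times X_2)$. Your key claim --- that, as in Lemma \ref{2.9++}, the complement of the union of the chosen subfamily ``has size at most that of the chosen subfamily'' --- fails for products. Basic boxes now come in mixed types, and only the boxes that are of the big form $(Z\times\{0,1\})\setminus F$ in \emph{every} coordinate project to open boxes of $X_0\times X_1\times X_2$; these all-big boxes are only guaranteed to cover the points all of whose coordinates lie on the $1$-level. After choosing $\le\ka$ of them whose projections cover $X_0\times X_1\times X_2$ (where $\ka=L(X_0\times X_1\times X_2)$), the uncovered set need not be a small set of isolated points: it can contain an entire open slice $\{\seq{x^*,0}\}\times\bbA(X_1)\times\bbA(X_2)$. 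Concretely, let $X_0=X_1=X_2=\bbR$, fix $x^*\in\bbR$, and take a cover whose all-big boxes all remove $\seq{x^*,0}$ from the first factor (plus mixed boxes covering the rest); then \emph{every} subfamily of all-big boxes misses that slice, which is open and of size $2^\om$, so its union is not dense, and the defect cannot be repaired by adding one cover member per missed point. The repair is recursive rather than ``formally identical'': there are only $\le\ka$ such slices (one for each removed point of each chosen box), each slice is open and homeomorphic to $\bbA(X_j)\times\bbA(X_k)$, the restriction of the cover to a slice is an open cover of it, and the \emph{two-factor} Lemma \ref{3.6++} gives $\le L(X_j\times X_k)\le\ka$ members of the original cover whose union is dense in that slice; adding these $\le\ka\cdot\ka$ members yields a dense union. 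So the three-factor statement is proved by induction on the number of factors, using the lower-dimensional statement inside slices, not by repeating the one-factor bookkeeping. (Two minor slips, both harmless to your other direction: the $0$-level of $\bbA(X)$ need not be dense when $X$ has isolated points --- what you actually use is only that its points are isolated; and Lemma \ref{3.6++} involves $\bbA(X_i)$, not $\bbA(X_i)_\delta$, so the removed sets are finite and there are no $G_\delta$-parts.)
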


Combining Proposition \ref{4.11} and Lemma \ref{3.10}, we have:
\begin{cor}
$\bbC$ forces the following:
For every uncountable cardinal $\ka$, $\ka$ is $\om_1$-strongly compact
if and only if $wL(X^3) \le \ka$ for every
regular $T_1$ Lindel\"of space $X$ with $X^2$ Lindel\"of.
\end{cor}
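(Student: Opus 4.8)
The plan is to mirror the proof of Corollary \ref{4.6}, now working with cubes instead of squares and carrying along the extra requirement that $X^2$ be Lindel\"of. As there, I would pass to a $(V,\bbC)$-generic extension $V[G]$ and argue both directions. The forward direction is immediate: if $\ka$ is $\om_1$-strongly compact, then by Bagaria and Magidor's theorem every open cover of a product of Lindel\"of spaces has a subcover of size $<\ka$, so for any Lindel\"of $X$ we get $L(X^3) \le \ka$ and hence $wL(X^3) \le L(X^3) \le \ka$. In particular this holds for every regular $T_1$ Lindel\"of $X$ with $X^2$ Lindel\"of, making no use of the hypothesis on $X^2$.

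For the converse I would argue the contrapositive. Suppose $\ka$ is not $\om_1$-strongly compact in $V[G]$. By Lemma \ref{4.5}, $\ka$ is not $\om_1$-strongly compact in $V$, and since the least $\om_1$-strongly compact cardinal is a limit cardinal by Fact \ref{2.2++}, the same reasoning as in Corollary \ref{4.6} shows that $\ka^+$ is not $\om_1$-strongly compact in $V$ either. Hence, by Fact \ref{2.2++}, there is a cardinal $\la \ge \ka^+$ such that $\p_{\ka^+}\la$ carries no $\om_1$-complete fine ultrafilter. Applying Proposition \ref{4.11} with $\ka^+$ in place of $\ka$, in $V[G]$ we obtain regular $T_1$ Lindel\"of spaces $X_0$, $X_1$, $X_2$ such that $X_i \times X_j$ is Lindel\"of for all $i,j<3$ while $L(X_0 \times X_1 \times X_2) \ge \ka^+$.

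Next I would assemble a single witnessing space. Let $Y = X_0 \oplus X_1 \oplus X_2$; then $Y$ is regular $T_1$ Lindel\"of, $Y^2$ is Lindel\"of by Lemma \ref{3.10}(1), and $L(Y^3) \ge L(X_0 \times X_1 \times X_2) \ge \ka^+$ by Lemma \ref{3.10}(2). Put $X = \bbA(Y)$, which is again regular $T_1$ Lindel\"of. Taking all three factors equal to $Y$ in Lemma \ref{3.10}(3) gives $wL(X^3) = wL(\bbA(Y)^3) = L(Y^3) \ge \ka^+ > \ka$, so $X$ fails $wL(X^3) \le \ka$. It then remains only to check that $X$ lies in the relevant class, i.e.\ that $X^2 = \bbA(Y)^2$ is Lindel\"of, which together with the previous line completes the contrapositive.

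The main obstacle is exactly this last point: deducing that $\bbA(Y)^2$ is Lindel\"of from the facts that $Y$ and $Y^2$ are Lindel\"of. I would prove it directly by a ``sheet'' analysis of $\bbA(Y)^2 = (Y \times \{0,1\})^2$ according to the four values of the two second coordinates. The closed sheet $(Y \times \{1\})^2$ is homeomorphic to $Y^2$, hence Lindel\"of, so it admits a countable subcover; writing each chosen basic open set as a product $[(O_1 \times \{0,1\}) \setminus F_1] \times [(O_2 \times \{0,1\}) \setminus F_2]$ with $F_1$, $F_2$ finite subsets of $Y \times \{0\}$, the union of the projections $R_1$, $R_2$ of the removed finite sets to $Y$ is countable. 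A point in any other sheet is then already covered unless its isolated first coordinate lies over $R_1$ or its isolated second coordinate lies over $R_2$, so the leftover set is contained in $((R_1 \times \{0\}) \times \bbA(Y)) \cup (\bbA(Y) \times (R_2 \times \{0\}))$, which is a countable union of slices each homeomorphic to the Lindel\"of space $\bbA(Y)$, hence Lindel\"of. Covering these with countably many further sets yields a countable subcover of $\bbA(Y)^2$, establishing that $X^2$ is Lindel\"of. (Equivalently one could first verify by the same sheet argument that $\bbA(X_i) \times \bbA(X_j)$ is Lindel\"of for all $i,j<3$ and take $X = \bbA(X_0) \oplus \bbA(X_1) \oplus \bbA(X_2)$, using that $wL$ of a clopen piece is at most $wL$ of the whole; routing through $\bbA(Y)$ keeps the bookkeeping minimal.)
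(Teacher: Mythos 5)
Your proof is correct and follows the same route the paper takes: the paper obtains this corollary simply by ``combining Proposition \ref{4.11} and Lemma \ref{3.10}'', which is exactly the assembly you carry out (forward direction from Bagaria--Magidor, converse via Lemma \ref{4.5}, Proposition \ref{4.11} applied at $\ka^+$, the sum $Y=X_0\oplus X_1\oplus X_2$, and the duplicate $X=\bbA(Y)$ as witness). The one substantive difference is that you explicitly verify a step the paper leaves unstated: the witness must satisfy ``$X^2$ Lindel\"of'', and for $X=\bbA(Y)$ this means $\bbA(Y)^2$ is Lindel\"of, which does not follow from Lemma \ref{3.10} as literally stated --- part (1) only gives that $Y^2$ is Lindel\"of, and part (3) only gives weak-Lindel\"of information about products of duplicates. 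Your sheet analysis supplies precisely this missing implication, and it is correct: the closed $(1,1)$-sheet is homeomorphic to $Y^2$, a countable subfamily of basic sets covers it, and the points it misses lie in countably many slices of the form $(\{r\}\times\{0\})\times\bbA(Y)$ or $\bbA(Y)\times(\{r\}\times\{0\})$, each homeomorphic to the Lindel\"of space $\bbA(Y)$, so countably many further members of the cover absorb them. So your write-up is, if anything, more complete than the paper's one-line derivation.
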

\begin{remark}
Moreover we can replace the cube $X^3$ in the previous corollary by $X^{n+1}$ for arbitrary $n<\om$,
that is, for every positive $n<\om$,
$\bbC$ forces the following:
For every uncountable cardinal $\ka$, $\ka$ is $\om_1$-strongly compact
if and only if $wL(X^{n+1}) \le \ka$ for every
regular $T_1$ Lindel\"of space $X$ with $X^n$ Lindel\"of.
\end{remark}

\section{The extent}

In this section we prove Theorems \ref{thm6}, \ref{thm7}, and \ref{thm5}.
First we prove, in ZFC,
that the least measurable cardinal
bounds the extent of the $G_\delta$-topology of a Lindel\"of space and
of the product of Lindel\"of spaces.

Recall that for a space $X$ and an infinite subset $Y \subseteq X$,
a point $x \in X$ is a \emph{complete accumulation point of $Y$}
if $\size{O \cap Y}=\size{Y}$ for every open neighborhood $O$ of $x$.
The following is  a kind of folklore:
\begin{lemma}\label{4.1+++}
Let $\ka$ be a regular uncountable cardinal, and $X$ a space.
Then the following are equivalent:
\begin{enumerate}
\item Every subset of $X$ of size $\ka$ has a complete accumulation point.
\item Every open cover of $X$ of size $\ka$ has a subcover of size $<\ka$.
\end{enumerate}
\end{lemma}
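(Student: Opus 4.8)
The plan is to prove Lemma \ref{4.1+++} by establishing both implications directly from the definitions, exploiting the regularity of $\ka$ in each direction. The statement is a classical equivalence (closely related to the characterization of the ``$[\ka,\ka]$-compactness'' property), so the proof should be a careful but routine double implication.

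For $(1) \Rightarrow (2)$, I would argue by contraposition. Suppose $\calO$ is an open cover of $X$ with $\size{\calO}=\ka$ having no subcover of size $<\ka$. Enumerate $\calO=\{O_\alpha \mid \alpha<\ka\}$. Since no subcover of size $<\ka$ exists, for each $\beta<\ka$ the set $X \setminus \bigcup_{\alpha<\beta} O_\alpha$ is nonempty; pick a point $x_\beta$ in it. Let $Y=\{x_\beta \mid \beta<\ka\}$. The key observation is that for each fixed $\alpha<\ka$, the point $x_\beta$ avoids $O_\alpha$ whenever $\beta>\alpha$, so $O_\alpha \cap Y \subseteq \{x_\beta \mid \beta \le \alpha\}$, which has size $<\ka$ (using regularity of $\ka$, since $\size{\alpha+1}<\ka$). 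If $Y$ itself has size $\ka$, then no point of $X$ can be a complete accumulation point of $Y$: any point lies in some $O_\alpha$, and $\size{O_\alpha \cap Y}<\ka=\size{Y}$. The one technical point to check is that $\size{Y}=\ka$; if some points $x_\beta$ coincide this could fail, but regularity again guarantees that we can thin out to a subset of size $\ka$ that still meets each $O_\alpha$ in a set of size $<\ka$, so we still obtain a set of size $\ka$ with no complete accumulation point.

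For $(2) \Rightarrow (1)$, again I would contrapose. Suppose $Y \subseteq X$ has $\size{Y}=\ka$ but no complete accumulation point. Then every $x \in X$ has an open neighborhood $O_x$ with $\size{O_x \cap Y}<\ka$. The family $\{O_x \mid x \in X\}$ is an open cover of $X$, but it may be too large, so I first extract from it a subcover of size $\ka$. This is where the argument needs care: I would use that $Y$ itself, covered by the $O_x$, can be handled by noting each point of $Y$ lies in some $O_x$; more efficiently, one restricts attention to a subfamily witnessing coverage and invokes $(2)$ on a cover of size exactly $\ka$. Applying $(2)$ yields a subcover of size $<\ka$, say $\{O_{x_i} \mid i<\delta\}$ with $\delta<\ka$. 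But then $Y=\bigcup_{i<\delta}(O_{x_i}\cap Y)$ is a union of $<\ka$ sets each of size $<\ka$, so by regularity of $\ka$ we get $\size{Y}<\ka$, contradicting $\size{Y}=\ka$.

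The main obstacle I anticipate is the bookkeeping around sizes in the first step of $(2)\Rightarrow(1)$: the cover $\{O_x \mid x\in X\}$ has size $\size{X}$, which may exceed $\ka$, so hypothesis $(2)$ (which speaks only of covers \emph{of size $\ka$}) does not apply directly. The clean fix is to observe that it suffices to cover $Y$ rather than all of $X$, and to build from the $O_x$ a cover of size $\le\ka$ by selecting, for each of the $\ka$ points of $Y$, a single witnessing neighborhood; then $(2)$ applies to this size-$\ka$ cover. The use of regularity of $\ka$ is essential in both directions—in $(1)\Rightarrow(2)$ to ensure initial segments stay small, and in $(2)\Rightarrow(1)$ to conclude that a union of fewer than $\ka$ small sets is small—so I would flag explicitly where regularity is invoked.
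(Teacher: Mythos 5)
Your direction (1)$\Rightarrow$(2) is essentially sound and close to the paper's argument. The paper sidesteps your ``coinciding points'' worry by first arranging (using regularity) that $O_\alpha \nsubseteq \bigcup_{\beta<\alpha} O_\beta$ and then choosing $x_\alpha \in O_\alpha \setminus \bigcup_{\beta<\alpha} O_\beta$, which makes the $x_\alpha$ automatically pairwise distinct. Your thinning patch also works, but the justification you omit is this: if $x_\beta = x$ then, since $\calO$ covers $X$, the point $x$ lies in some $O_\gamma$, forcing $\beta \le \gamma$; so each fiber of $\beta \mapsto x_\beta$ has size $<\ka$, and regularity then gives that the image has size $\ka$.

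The genuine gap is in (2)$\Rightarrow$(1), precisely at the step you flagged as the ``clean fix.'' The family $\{O_y \mid y \in Y\}$ obtained by choosing one witnessing neighborhood per point of $Y$ covers $Y$ but in general does \emph{not} cover $X$, and hypothesis (2) speaks only of open covers of $X$; so (2) cannot be applied to it, and ``it suffices to cover $Y$'' is exactly the claim that needs proof, not an observation. Nor can you repair this by adding further open sets to cover $X \setminus \bigcup_{y \in Y} O_y$: to keep the total size at $\ka$ you must lump the extra witnessing neighborhoods into $\le\ka$ unions, and such a union can meet $Y$ in a set of size $\ka$, in which case the extended family may well admit a subcover of size $<\ka$ and no contradiction results. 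The missing idea -- and the paper's key construction -- is to group \emph{all} witnessing neighborhoods by initial segments of a fixed enumeration $Y=\{x_\beta \mid \beta<\ka\}$: for $\alpha<\ka$ put $W_\alpha=\bigcup\{O_x \mid x \in X,\ O_x \cap Y \subseteq \{x_\beta \mid \beta<\alpha\}\}$. Regularity guarantees every $x \in X$ lies in some $W_\alpha$ (its witness meets $Y$ in $<\ka$ points, hence in a bounded set of indices), so $\{W_\alpha \mid \alpha<\ka\}$ is a genuine open cover of $X$ of size $\ka$; and since $W_\alpha \cap Y \subseteq \{x_\beta \mid \beta<\alpha\}$, any subfamily of size $<\ka$ would trap $Y$ inside a proper initial segment (regularity again), a contradiction. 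With this replacement your contrapositive goes through; without it, the implication is not established.
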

\begin{proof}
(1) $\Rightarrow$ (2).
Let $\calO=\{O_\alpha \mid \alpha<\ka\}$ be an open cover of $X$, and suppose
$\calO$ has no subcover of size $<\ka$.
By our assumption and the regularity of $\ka$,
we may assume that $O_\alpha \nsubseteq \bigcup_{\beta<\alpha} O_\beta$ for
every $\alpha<\ka$.
Hence we can choose $x_\alpha \in O_\alpha \setminus \bigcup_{\beta<\alpha} O_\beta$.
Then the set $Y=\{x_\alpha \mid \alpha<\ka\}$ has cardinality $\ka$, but has no complete accumulation point.

(2) $\Rightarrow$ (1). Let $Y=\{x_\alpha \mid \alpha<\ka\}$ be a subset of $X$,
and suppose $Y$ has no complete accumulation point.
For $\alpha<\ka$, let $X_\alpha$ be the set of all $x \in X$
such that $O_x \cap Y \subseteq  \{x_\beta \mid \beta <\alpha\}$ for some open neighborhood $O_x$ of $x$.
Let $W_\alpha=\bigcup\{O_x \mid x \in X_\alpha\}$.
Then the family $\{W_\alpha \mid \alpha<\ka\}$ is an open cover of $X$ of size $\ka$,
but has no subcover of size $<\ka$.
\end{proof}

\begin{lemma}\label{2.5}
Suppose $\ka$ is a measurable cardinal.
Let $X$ be a Lindel\"of space (no separation axiom is assumed).
Then every subset of $X_\delta$ of size $\ka$ has a complete accumulation point.
In particular $X_\delta$ has no closed discrete subset of size $\ka$, and $e(X_\delta) \le \ka$.
\end{lemma}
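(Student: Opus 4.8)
The plan is to use the measurability of $\ka$ directly through a $\ka$-complete nonprincipal ultrafilter on $\ka$, and to produce a complete accumulation point by pushing this ultrafilter forward along an enumeration of the given subset into $X_\delta$. Let $Y=\{x_\alpha \mid \alpha<\ka\}$ be a subset of $X_\delta$ of size $\ka$, which we may assume is an injective enumeration. Fix a $\ka$-complete nonprincipal ultrafilter $U$ over $\ka$. For each point $x \in X$, I want to decide whether $x$ is a complete accumulation point of $Y$ by asking whether $\{\alpha<\ka \mid x_\alpha \in O\} \in U$ for every $G_\delta$-neighborhood $O$ of $x$; since $U$ is nonprincipal and $\ka$-complete, any set in $U$ has size $\ka$, so membership in $U$ forces $\size{O \cap Y}=\ka$. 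The crux is therefore to find a single point $x$ all of whose basic $G_\delta$-neighborhoods have their index sets in $U$.

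The key step, and the one I expect to be the main obstacle, is locating such an $x$ using the Lindel\"of property of $X$ together with $\ka$-completeness. Here is how I would carry it out. Suppose toward a contradiction that no point of $X$ is a complete accumulation point of $Y$ in $X_\delta$. Then for each $x \in X$ there is a $G_\delta$-set $O_x \ni x$ with $\{\alpha \mid x_\alpha \in O_x\} \notin U$. Writing $O_x=\bigcap_{n<\om} U^x_n$ as a countable intersection of open sets of $X$, I would use that $U$ is an ultrafilter to get, for each $x$, that $\{\alpha \mid x_\alpha \notin O_x\} \in U$, and by $\ka$-completeness (really $\om_1$-completeness suffices) there is some $n_x<\om$ with $\{\alpha \mid x_\alpha \notin U^x_{n_x}\} \in U$. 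Thus each $x$ has an \emph{open} neighborhood $U^x_{n_x}$ in $X$ whose $Y$-index set is small, i.e. not in $U$.

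Now I invoke that $X$ itself is Lindel\"of: the family $\{U^x_{n_x} \mid x \in X\}$ is an open cover of $X$ in its original topology, so it has a countable subcover $\{U^{x_k}_{n_{x_k}} \mid k<\om\}$. For each $k$ the set $\{\alpha \mid x_\alpha \in U^{x_k}_{n_{x_k}}\}$ is not in $U$, hence its complement is in $U$; by $\om_1$-completeness (again guaranteed since $\ka$ is measurable, in particular $\om_1$-complete), the intersection over $k<\om$ of these complements lies in $U$, so in particular is nonempty. But a point $x_\alpha$ in this intersection satisfies $x_\alpha \notin \bigcup_k U^{x_k}_{n_{x_k}}=X$, a contradiction. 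This yields the desired complete accumulation point $x$ in $X_\delta$.

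Finally, the two "in particular" clauses follow formally. If $C \subseteq X_\delta$ were closed discrete of size $\ka$, then $C$, being a subset of size $\ka$, would have a complete accumulation point $x$ by what we proved; but $x \in \overline{C}=C$ since $C$ is closed, and then discreteness of $C$ gives a neighborhood $O$ of $x$ with $O \cap C=\{x\}$, so $\size{O \cap C}=1 \neq \ka=\size{C}$, contradicting that $x$ is a complete accumulation point. Hence $e(X_\delta)<\ka$, and combined with regularity of the measurable cardinal $\ka$ (so that $e(X_\delta)\le\ka$ as asserted) we are done. I expect the only delicate point to be the bookkeeping ensuring that passing from the $G_\delta$-neighborhood $O_x$ to a single \emph{open} factor $U^x_{n_x}$ preserves the "index set not in $U$" property, which is exactly where $\om_1$-completeness of the ultrafilter is used.
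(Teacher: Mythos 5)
Your proof is correct and follows essentially the same route as the paper's: assume there is no complete accumulation point, use $\omega_1$-completeness of a $\kappa$-complete nonprincipal ultrafilter to replace each witnessing $G_\delta$-neighborhood by a single open factor whose index set is not in the ultrafilter, apply Lindel\"ofness of $X$ to extract a countable subcover, and use $\omega_1$-completeness once more to reach a contradiction. The only blemish is the closing remark: what follows is $e(X_\delta)\le\kappa$ directly (every closed discrete subset has size $<\kappa$, so the supremum is at most $\kappa$), not $e(X_\delta)<\kappa$, and the regularity of $\kappa$ plays no role there.
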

\begin{proof}
Suppose to the contrary that $X_\delta$
has a subset $Y=\{x_\alpha \mid \alpha<\ka\}$ which has no complete accumulation point.
For each $x \in X$,
take a $G_\delta$-set $Z^x$ in $X$ with $x \in Z^x$ and
$\size{Z^x \cap Y} <\ka$.
Take open sets $O^x_n$ ($n<\om$) in $X$ with $Z^x=\bigcap_{n<\om} O^x_n$.

Fix a non-principal $\ka$-complete ultrafilter $U$ over $\ka$.
For $x \in X$,
since $\size{Y \cap Z^x}<\ka$,
we have that $\{\alpha<\ka \mid x_\alpha \notin Z^x\} \in U$.
Because $U$ is $\om_1$-complete,
there is $n_x<\om$
such that $\{\alpha<\ka \mid x_\alpha \notin O^{x}_{n_x}\} \in U$.
Then $\{O^x_{n_x} \mid x \in X\}$ is an open cover of $X$.
Because $X$ is Lindel\"of, there are countably many $x_0,x_1,\dotsc \in X$
such that $\{O^{x_i}_{n_{x_i}} \mid i<\om\}$ covers $X$.
$U$ is $\om_1$-complete, hence we can take $\alpha<\ka$
such that $x_\alpha \notin O^{x_i}_{n_{x_i}}$ for every $i<\om$.
Then $x_\alpha \notin \bigcup_{i<\om} O^{x_i}_{n_{x_i}}$,
this is a contradiction.
\end{proof}

\begin{lemma}\label{5.3+}
Let $\ka$ be a measurable cardinal.
Let $\{X_\xi \mid \xi<\la\}$ be a family of Lindel\"of spaces (no separation axiom is assumed).
Then every subset of $\prod_{\xi<\la} X_\xi$ of size $\ka$
has a complete accumulation point.
In particular, $\prod_{\xi<\la} X_\xi$ has no closed discrete subset of size $\ka$,
and $e(\prod_{\xi<\la} X_\xi) \le \ka$.
\end{lemma}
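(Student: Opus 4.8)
The plan is to mimic the argument of Lemma \ref{2.5}, replacing the analysis of $G_\delta$-sets by an analysis of basic open boxes in the product, and to extract a single accumulation point coordinate by coordinate. Set $P=\prod_{\xi<\la} X_\xi$ and suppose toward a contradiction that $Y=\{x_\alpha \mid \alpha<\ka\}$ is a subset of $P$ of size $\ka$, enumerated without repetitions, which has no complete accumulation point. Fix a non-principal $\ka$-complete ultrafilter $U$ over $\ka$. First I would record the standard fact that, since $U$ is $\ka$-complete and non-principal, every member of $U$ has cardinality $\ka$: if $A\in U$ had size $<\ka$, then $\ka\setminus A=\bigcap_{a\in A}(\ka\setminus\{a\})$ would lie in $U$ by $\ka$-completeness (each $\ka\setminus\{a\}\in U$ as $U$ is non-principal), contradicting $A\in U$.

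The heart of the argument is to build a candidate accumulation point $p=\seq{p_\xi \mid \xi<\la}\in P$ as a coordinatewise $U$-limit. Fix $\xi<\la$ and consider the map $\alpha\mapsto x_\alpha(\xi)$ from $\ka$ into $X_\xi$. I claim there is $p_\xi\in X_\xi$ such that $\{\alpha<\ka \mid x_\alpha(\xi)\in O\}\in U$ for every open neighborhood $O$ of $p_\xi$. Otherwise, each $q\in X_\xi$ would have an open neighborhood $O_q$ with $\{\alpha \mid x_\alpha(\xi)\notin O_q\}\in U$; since $X_\xi$ is Lindel\"of, countably many of the $O_q$ cover $X_\xi$, and by the $\om_1$-completeness of $U$ (which follows from $\ka$-completeness, as $\ka>\om_1$) there would be $\alpha<\ka$ with $x_\alpha(\xi)$ lying outside all of them, i.e.\ outside $X_\xi$, which is absurd. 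This is precisely the place where Lindel\"ofness of each factor is used.

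It remains to verify that $p$ is a complete accumulation point of $Y$, which yields the contradiction. Let $O=\prod_{\xi<\la} O_\xi$ be a basic open neighborhood of $p$, so each $O_\xi$ is open in $X_\xi$, $p_\xi\in O_\xi$, and $O_\xi=X_\xi$ for all $\xi$ outside some finite $F\subseteq\la$. By the choice of the $p_\xi$ we have $\{\alpha \mid x_\alpha(\xi)\in O_\xi\}\in U$ for each $\xi\in F$, and since $F$ is finite the intersection $\{\alpha \mid x_\alpha\in O\}=\bigcap_{\xi\in F}\{\alpha \mid x_\alpha(\xi)\in O_\xi\}$ again lies in $U$, hence has size $\ka$. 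As $Y$ is enumerated without repetitions this gives $\size{O\cap Y}=\ka=\size{Y}$, and since every open neighborhood of $p$ contains a basic one, $p$ is a complete accumulation point of $Y$, contrary to assumption. The final assertions then follow from Lemma \ref{4.1+++} exactly as in Lemma \ref{2.5}.

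The main obstacle, and the reason this does not follow verbatim from Lemma \ref{2.5}, is that $P$ need not be Lindel\"of, so one cannot run the covering argument on the whole product. The key idea that circumvents this is to use the measurable ultrafilter $U$ globally while invoking Lindel\"ofness only coordinatewise to produce the $U$-limits $p_\xi$ (this is where $\om_1$-completeness enters); the finite support of basic open boxes then ensures that only finitely many coordinate conditions must be intersected, so the mere filter property of $U$ keeps the relevant index set in $U$, while $\ka$-completeness guarantees that this set, and hence $O\cap Y$, has full size $\ka$.
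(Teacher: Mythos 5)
Your proof is correct, and it takes a more direct route than the paper's, though the engine underneath is the same. The paper first invokes Lemma \ref{4.1+++} to reduce the statement to a covering assertion: every open cover of $\prod_{\xi<\la}X_\xi$ of size $\ka$ has a subcover of size $<\ka$. Assuming a counterexample cover $\{O_\alpha \mid \alpha<\ka\}$, it picks witnesses $f_\beta \notin \bigcup_{\alpha<\beta}O_\alpha$, shows for each coordinate $\xi$ (by exactly your Lindel\"of-plus-$\om_1$-completeness argument) that the open sets $W\subseteq X_\xi$ with $\{\beta \mid f_\beta(\xi)\notin W\}\in U$ cannot cover $X_\xi$, and picks a point $g$ avoiding all of them --- which is precisely a coordinatewise $U$-limit of $\seq{f_\beta \mid \beta<\ka}$ in your sense. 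Landing $g$ in some $O_\alpha$, shrinking to a finitely supported box, and intersecting the finitely many $U$-large coordinate sets with $\{\beta \mid \beta>\alpha\}$ produces $f_\beta\in O_\alpha$ with $\beta>\alpha$, a contradiction. You instead apply the coordinatewise $U$-limit construction directly to the given set $Y$ and check that the limit $p$ is a complete accumulation point; this bypasses Lemma \ref{4.1+++} and all the cover bookkeeping, proves the lemma's first sentence literally as stated, and makes the $U$-limit structure explicit. What the paper's packaging buys is uniformity with the neighboring lemmas, which are all phrased through covers; what yours buys is a shorter, self-contained argument. One small blemish: your final sentence attributes the ``in particular'' clauses to Lemma \ref{4.1+++}, but they do not need it --- if $C$ were closed discrete of size $\ka$, a complete accumulation point of $C$ would lie in $C$ (as $C$ is closed) and be an isolated point of $C$, contradicting complete accumulation; this is also how the analogous clause in Lemma \ref{2.5} should be read.
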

\begin{proof}
By Lemma \ref{4.1+++}, it is enough to show that every open cover of $\prod_{\xi<\la} X_\xi$ of size $\ka$
has a subcover of size $<\ka$.
Let $\calO=\{O_\alpha \mid \alpha<\ka\}$ be an open cover,
and suppose to the contrary that $\calO$ has no subcover of size $<\ka$.
Fix a non-principal $\ka$-complete ultrafilter $U$ over $\ka$.
For $\beta<\ka$,
by our assumption, $\{O_\alpha \mid \alpha<\beta\}$ does not cover $\prod_{\xi<\la} X_\xi$.
Fix $f_\beta \in \prod_{\xi<\la} X_\xi \setminus \bigcup_{\alpha<\beta} O_\alpha$.
For $\xi<\la$, let $\calF_\xi=\{W \subseteq X_\xi \mid W$ is open,
$\{\beta<\ka \mid f_\beta(\xi) \notin W\} \in U\}$.
We claim that $\calF_\xi$ is not a cover of $X_\xi$.
If not, since $X_\xi$ is Lindel\"of, there are countably many $W_0,W_1,\dotsc \in \calF_\xi$
such that $X_\xi=\bigcup_{i<\om} W_i$.
Since $U$ is $\om_1$-complete,
there is $i<\om$ with $\{\beta<\ka \mid f_\beta(\xi) \in W_i\} \in U$.
This contradicts the choice of $W_i \in \calF_\xi$.

Fix $x_\xi \in X_\xi \setminus \bigcup \calF_\xi$,
and define $g \in\prod_{\xi<\la} X_\xi$ by $g(\xi)=x_\xi$.
We can take $\alpha<\ka$ with $g \in O_\alpha$.
Then we can find finitely many $\xi_0,\dotsc, \xi_n<\la$ and $W_{0},\dotsc, W_{n}$ such that
each $W_i$ is open in $X_{\xi_i}$
and $g \in \prod_{\eta<\la, \eta \neq \xi_0,\dotsc, \xi_n} X_\eta \times \prod_{i \le n} W_i \subseteq O_\alpha$.

For each $i \le n$, because $g(\xi_i)=x_{\xi_i} \in W_i$,
we have $W_i \notin \calF_{\xi_i}$, and $\{\beta<\ka\mid f_\beta(\xi_i)  \in W_i\} \in U$.
Again, since $U$ is $\om_1$-complete,
there is $\beta<\ka$
such that $\beta>\alpha$ and $f_\beta(\xi_i) \in W_i$ for every $i \le n$.
Then $f_\beta \in \prod_{\eta<\la, \eta \neq \xi_0,\dotsc, \xi_n} X_\eta \times \prod_{i \le n} W_i
\subseteq O_\alpha$, this contradicts  the choice of $f_\beta$.
\end{proof}

\begin{remark}\label{remark}
If we suppose some separation axiom, 
the conclusions that $e(X_\delta) \le \ka$ in Lemma \ref{2.5} and
that $e(\prod_{\xi<\la} X_\xi) \le \ka$ in Lemma \ref{5.3+} easily follow
from realcompactness.
A space $X$ is \emph{realcompact} if
$X$ embeds as a closed subspace of $\bbR^\theta$ for some cardinal $\theta$.
The following are known (e.g., see Gillman-Jerison \cite{GJ}):
\begin{enumerate}
\item Every Tychonoff Lindel\"of (equivalently, regular $T_1$ Lindel\"of) space is realcompact.
\item Every discrete realcompact space has cardinality strictly less than the least measurable cardinal.
\item Every closed subspace of a realcompact space is real compact.
\item Every product of realcompact spaces is real compact.
\end{enumerate}
If $\{X_\xi \mid \xi<\la\}$ is a family of Tychonoff Lindel\"of spaces,
then the product space $\prod_{\xi<\la} X_\xi$ is realcompact by (1) and (4).
In addition every closed discrete subset of $\prod_{\xi<\la} X_\xi$ has cardinality less than the
least measurable cardinal by (2) and (3).

If $X$ is realcompact, it is known that $X_\delta$ is also realcompact (Comfort-Retta \cite{CR}),
hence if $X$ is a Tychonoff Lindel\"of space, then every closed discrete subset of $X_\delta$
has cardinality strictly less than the least measurable cardinal.
\end{remark}

\begin{cor}\label{5.4+}
Let $\ka$ be an uncountable cardinal.
Then the following are equivalent:
\begin{enumerate}
\item $\ka$ is the least measurable cardinal.
\item $\ka$ is the least cardinal such that $e(\prod_{\xi<\la} X_\xi) \le \ka$
for every family $\{X_\xi \mid \xi<\la\}$ of Lindel\"of spaces.
\item $\ka$ is the least cardinal such that $e(\prod_{\xi<\la} X_\xi) \le \ka$
for every family $\{X_\xi \mid \xi<\la\}$ of regular $T_1$ Lindel\"of spaces.
\end{enumerate}
\end{cor}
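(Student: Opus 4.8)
The plan is to show that each of the three clauses pins down the same cardinal, namely the least measurable cardinal, which I write as $\ka_0$. Write $\mu_2$ for the least cardinal satisfying the property in clause (2) and $\mu_3$ for the least cardinal satisfying the property in clause (3). Since clause (1) asserts $\ka=\ka_0$, clause (2) asserts $\ka=\mu_2$, and clause (3) asserts $\ka=\mu_3$, the equivalence of (1)--(3) follows at once from the single identity $\mu_2=\mu_3=\ka_0$. So the whole problem reduces to computing these two least cardinals, which I do by sandwiching each between $\ka_0$ and $\ka_0$.

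First I would establish the upper bounds $\mu_2\le\ka_0$ and $\mu_3\le\ka_0$. Applying Lemma \ref{5.3+} with the measurable cardinal taken to be $\ka_0$ gives $e(\prod_{\xi<\la}X_\xi)\le\ka_0$ for every family $\{X_\xi \mid \xi<\la\}$ of Lindel\"of spaces, and a fortiori for every family of regular $T_1$ Lindel\"of spaces. Thus $\ka_0$ itself witnesses the defining property in both (2) and (3), so $\mu_2\le\ka_0$ and $\mu_3\le\ka_0$. Moreover every regular $T_1$ Lindel\"of space is Lindel\"of, so any cardinal bounding all extents in (2) also bounds all extents in (3); hence $\mu_3\le\mu_2$.

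Next I would prove the matching lower bound $\mu_3\ge\ka_0$ by exhibiting, for each cardinal $\nu<\ka_0$, a product of regular $T_1$ Lindel\"of spaces whose extent is at least $\nu$. Since $\nu<\ka_0$, there is no non-principal $\om_1$-complete ultrafilter over $\nu$ (as recalled in the introduction), so the discrete space $D$ of cardinality $\nu$ is realcompact; this is classical, being the converse of item (2) in Remark \ref{remark} (see \cite{GJ}). By the definition of realcompactness $D$ embeds as a closed subspace of $\bbR^\theta$ for some cardinal $\theta$, and its image is then a closed discrete subset of $\bbR^\theta$ of size $\nu$. As $\bbR$ is a regular $T_1$ Lindel\"of space, $\bbR^\theta=\prod_{\xi<\theta}\bbR$ is a product of regular $T_1$ Lindel\"of spaces, so $e(\bbR^\theta)\ge\nu$. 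Consequently any cardinal $\ka$ satisfying the property in (3) must satisfy $\ka\ge e(\bbR^\theta)\ge\nu$ for every $\nu<\ka_0$, forcing $\ka\ge\ka_0$; in particular $\mu_3\ge\ka_0$.

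Combining the two steps gives $\ka_0\le\mu_3\le\mu_2\le\ka_0$, so $\mu_2=\mu_3=\ka_0$, which is exactly what is needed. I expect the only non-routine ingredient to be the lower-bound construction, and specifically the appeal to realcompactness of small discrete spaces: the upper bound is an immediate application of Lemma \ref{5.3+}, and the passage from $\mu_2=\mu_3=\ka_0$ to the stated equivalence is pure bookkeeping. One should double-check that the realcompact embedding really yields a closed \emph{discrete} subset of the full size $\nu$, but this is automatic, since the embedding is a homeomorphism onto a closed subspace and $D$ is discrete.
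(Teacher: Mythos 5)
Your proposal is correct, and its overall shape (sandwiching both least cardinals between the least measurable $\ka_0$ and itself) matches the paper's; the difference lies in the key ingredient for the lower bound. The upper bound is identical: Lemma \ref{5.3+} applied with the measurable cardinal equal to $\ka_0$. For the lower bound, the paper cites a fact of Gorelic \cite{G}: if there is no $\om_1$-complete non-principal ultrafilter over $\ka$, then $e(\om^{2^\ka}) \ge \ka$; since $\om^{2^\ka}$ is a product of regular $T_1$ Lindel\"of spaces, this immediately shows that no cardinal below $\ka_0$ can satisfy the property in (2) or (3). You instead invoke the classical Gillman--Jerison theorem that a discrete space of cardinality below the least measurable cardinal is realcompact, hence embeds as a closed discrete subspace of some power $\bbR^\theta$, which is again a product of regular $T_1$ Lindel\"of spaces. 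These two ingredients are close cousins --- Gorelic's fact is essentially a sharpening of the realcompactness statement, with $\om^{2^\ka}$ in place of $\bbR^\theta$ --- and both are appeals to the literature, so neither route is more self-contained than the other; yours has the mild advantage of tying into the paper's own Remark \ref{remark}, though you correctly note that the Remark records only the direction you do \emph{not} need, and that the converse you use must be pulled from \cite{GJ}. One point you (and, to be fair, the paper) leave implicit: to pass from ``$\ka \ge \nu$ for every cardinal $\nu < \ka_0$'' to ``$\ka \ge \ka_0$'', one needs $\ka_0$ to be a limit cardinal; this is standard, since measurable cardinals are inaccessible, but it deserves a word, as otherwise the sup of the cardinals below $\ka_0$ could fall short of $\ka_0$.
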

\begin{proof}
Let $\ka_1$ be the least measurable cardinal,
and $\ka_2$
the least cardinal $\ka$ satisfying that $e(\prod_{\xi<\la} X_\xi) \le \ka$
for every family $\{X_\xi \mid \xi<\la\}$ of regular $T_1$ Lindel\"of spaces.
The inequality $\ka_2 \le \ka_1$ follows from Lemma \ref{5.3+}.
$\ka_1 \le \ka_2$ is immediate from the
following fact:
\begin{fact}[Gorelic \cite{G}]
Let $\ka$ be an uncountable cardinal and suppose there is no
$\om_1$-complete non-principal ultrafilter over $\ka$.
Then $e(\om^{2^\ka}) \ge \ka$.
\end{fact}
\end{proof}

For constructing a space with large extent in $G_\delta$-topology,
we will use a space $\beta \ka$.
%
Let $\ka$ be an uncountable cardinal,
and suppose there is no $\om_1$-complete non-principal ultrafilter over $\ka$.
Identifying $\ka$ as a discrete space, let $\ka^*$ be the reminder of $\beta \ka$.
As before, fix a proper $G_\delta$-cover $\calO$ of $\ka^*$.
Note that every element of $\calO$ is a closed $G_\delta$-subset of $\beta \ka$.

Let $E$ be the set of all principal ultrafilters over $\ka$.
$E$ is discrete in $\beta \ka$,
hence also  in $(\beta \ka)_\delta$.
\begin{lemma}\label{5.6+}
$E$ is closed in $(\beta \ka)_\delta$,
in particular $e((\beta \ka)_\delta) \ge \ka$.
\end{lemma}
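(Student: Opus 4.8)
The plan is to show that the complement $\ka^* = \beta\ka \setminus E$ is open in $(\beta\ka)_\delta$, which is equivalent to $E$ being closed. Since $E$ is already known to be discrete in $(\beta\ka)_\delta$ and $\size{E} = \ka$, this immediately yields a closed discrete subset of size $\ka$, and hence $e((\beta\ka)_\delta) \ge \ka$.

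To establish openness of $\ka^*$, I would produce, for each $U \in \ka^*$, a $G_\delta$-subset $G_U$ of $\beta\ka$ with $U \in G_U \subseteq \ka^*$. The key input is the standing hypothesis that $\ka$ carries no $\om_1$-complete non-principal ultrafilter: every $U \in \ka^*$ is non-principal, hence not $\om_1$-complete, so (as recorded earlier in the excerpt) there is a countable partition $\{A_n \mid n<\om\}$ of $\ka$ with $A_n \notin U$ for every $n$. I set $G_U = \{W \in \beta\ka \mid A_n \notin W \text{ for every } n<\om\}$. Each $\{W \mid A_n \notin W\} = \{W \mid \ka \setminus A_n \in W\}$ is clopen in $\beta\ka$, so $G_U$ is a closed $G_\delta$-set, and by construction $U \in G_U$.

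It then remains to verify $G_U \cap E = \emptyset$. If $W \in E$ is the principal ultrafilter concentrated at some $x \in \ka$, then since $\{A_n \mid n<\om\}$ partitions $\ka$ there is a (unique) $n$ with $x \in A_n$, whence $A_n \in W$ and so $W \notin G_U$. Thus $G_U \subseteq \beta\ka \setminus E = \ka^*$. Taking the union over all $U \in \ka^*$ gives $\ka^* = \bigcup_{U \in \ka^*} G_U$, a union of $G_\delta$-sets, so $\ka^*$ is open in $(\beta\ka)_\delta$ and $E$ is closed.

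I do not expect a genuine obstacle here. The only point requiring attention is recognizing that the non-$\om_1$-completeness furnished by the hypothesis is precisely the combinatorial data needed to separate each non-principal ultrafilter from $E$ by a single $G_\delta$-set, while the very same partition automatically excludes every principal ultrafilter. Once $E$ is seen to be closed, it is a closed discrete subset of $(\beta\ka)_\delta$ of cardinality $\ka$, which gives the asserted lower bound $e((\beta\ka)_\delta) \ge \ka$.
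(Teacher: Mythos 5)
Your proof is correct and follows essentially the same route as the paper: the paper's one-line proof observes that the fixed proper $G_\delta$-cover $\calO$ of $\ka^*$ consists of $G_\delta$-subsets of $\beta\ka$ each disjoint from $E$, which is exactly the cover $\{G_U \mid U \in \ka^*\}$ you construct by hand from the non-$\om_1$-completeness of each non-principal ultrafilter. The only difference is that you rebuild this cover explicitly rather than citing the one already fixed in the setup.
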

\begin{proof}
It is clear that $E \cap Z =\emptyset$ for each $Z \in \calO$.
\end{proof}

Now we have Theorems \ref{thm6}.
\begin{cor}
Let $\ka$ be an uncountable cardinal.
Then the following are equivalent:
\begin{enumerate}
\item $\ka$ is the least measurable cardinal.
\item $\ka$ is the least cardinal such that
$e(X_\delta) \le \ka$ for every Lindel\"of space $X$.
\item $\ka$ is the least cardinal such that
$e(X_\delta) \le \ka$ for every compact Hausdorff space $X$.
\end{enumerate}
\end{cor}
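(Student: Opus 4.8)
The plan is to phrase the corollary as an identity among three cardinals. Let $\ka_1$ be the least measurable cardinal, let $\ka_2$ be the least cardinal such that $e(X_\delta) \le \ka_2$ for every Lindel\"of space $X$, and let $\ka_3$ be the least cardinal such that $e(X_\delta) \le \ka_3$ for every compact Hausdorff space $X$. Conditions (1), (2), (3) assert exactly $\ka = \ka_1$, $\ka = \ka_2$, and $\ka = \ka_3$ respectively, so it suffices to prove $\ka_1 = \ka_2 = \ka_3$, and I would establish this by a cycle of inequalities.

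First the two easy inequalities. Since every compact Hausdorff space is Lindel\"of, the class of compact Hausdorff spaces is contained in the class of Lindel\"of spaces; hence any cardinal that bounds $e(X_\delta)$ over all Lindel\"of spaces also bounds it over all compact Hausdorff spaces, giving $\ka_3 \le \ka_2$. Next, applying Lemma \ref{2.5} with $\ka = \ka_1$ shows that $e(X_\delta) \le \ka_1$ holds for every Lindel\"of space $X$, so $\ka_1$ is one of the bounding cardinals for the Lindel\"of class, and therefore $\ka_2 \le \ka_1$.

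The remaining inequality $\ka_1 \le \ka_3$ is the heart of the argument, and it is where Lemma \ref{5.6+} enters. I would argue that no cardinal $\ka < \ka_1$ can serve as a bound for the compact Hausdorff class, so that $\ka_3$, being the least such bound, must satisfy $\ka_3 \ge \ka_1$. Fix $\ka < \ka_1$. Since the least measurable cardinal is inaccessible, in particular a limit cardinal, we have $\ka^+ < \ka_1$; and because $\ka^+ < \ka_1$, there is no $\om_1$-complete non-principal ultrafilter over $\ka^+$ (otherwise there would be a measurable cardinal $\le \ka^+ < \ka_1$, using the fact recorded in the introduction that the completeness of such an ultrafilter is at least the least measurable). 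The space $\beta\ka^+$ is compact Hausdorff, so Lemma \ref{5.6+} applied with $\ka^+$ in place of $\ka$ yields $e((\beta\ka^+)_\delta) \ge \ka^+ > \ka$. Thus $\ka$ fails to bound the extent of the $G_\delta$-topology over all compact Hausdorff spaces, and $\ka_1 \le \ka_3$ follows.

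Combining, $\ka_1 \le \ka_3 \le \ka_2 \le \ka_1$, whence $\ka_1 = \ka_2 = \ka_3$ and the three conditions are equivalent. The only delicate point is the passage from $\ka < \ka_1$ to a witnessing cardinal lying strictly between $\ka$ and $\ka_1$: this relies on the least measurable being a limit cardinal, which guarantees $\ka^+ < \ka_1$ and lets Lemma \ref{5.6+} produce extent strictly above $\ka$ rather than merely $\ge \ka$.
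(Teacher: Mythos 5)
Your proof is correct and follows essentially the same route as the paper: the same three cardinals $\ka_1,\ka_2,\ka_3$, with $\ka_2\le\ka_1$ from Lemma \ref{2.5}, $\ka_3\le\ka_2$ from the inclusion of classes, and $\ka_1\le\ka_3$ from Lemma \ref{5.6+} applied to $\beta(\ka^+)$, using that the least measurable is a limit cardinal so that $\ka<\ka_1$ implies $\ka^+<\ka_1$. The only difference is cosmetic: the paper runs this last step as a proof by contradiction with $\ka=\ka_3$, whereas you argue directly that no $\ka<\ka_1$ can bound the compact Hausdorff class.
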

\begin{proof}
Let $\ka_1$ be the least measurable cardinal,
$\ka_2$ the least cardinal such that
$e(X_\delta) \le \ka$ for every Lindel\"of space $X$,
and $\ka_3$  the least cardinal such that
$e(X_\delta) \le \ka$ for every compact Hausdorff space $X$.

By Lemma \ref{2.5}, we have $\ka_2 \le \ka_1$.
The inequality $\ka_3 \le \ka_2$ follow from the definitions.
For $\ka_1 \le \ka_3$,
suppose to the contrary that $\ka_3< \ka_1$.
Then $\ka_3^+ <\ka_1$, and there is no $\om_1$-complete non-principal ultrafilter over $\ka_3^+$.
By Lemma \ref{5.6+}, the extent of $ \beta (\ka_3^+)_\delta$ is $\ge \ka_3^+$.
This contradicts  the definition of $\ka_3$.
\end{proof}

Finally we prove Theorem \ref{thm5}.
Let $\ka$ be an uncountable cardinal,
and suppose there is no $\om_1$-complete non-principal ultrafilter over $\ka$.
Fix a proper $G_\delta$-cover $\calO$ of $\ka^*$.
Let $\size{\calO}=\mu$.
Take an enumeration $\{Z_\alpha \mid \alpha<\mu\}$ of $\calO$,
and for $\alpha<\mu$, take an enumeration $\{A^\alpha_n \mid n<\om\}$ of $\calA_\alpha$,
where $\calA_\alpha$ is a countable partition
of $\ka$ with $Z_\alpha=\{U \in \ka^* \mid A \notin U$ fo every $A \in \calA_\alpha\}$.

Let $G$ be $(V, \bbC)$-generic, and we work in $V[G]$.
Fix $a \subseteq \om$, and we define $\beta \ka_a^V$ in the following way.
For $A \subseteq \ka$ with $A \in V$ and
finite (possibly empty) sequence $\vec{\alpha} =\seq{\alpha_0,\dotsc, \alpha_k} \in \mu^{<\om}$,
let $\tilde W^a_{A, \vec{\alpha}}=\{U \in \beta \ka^V \mid
A \in U, A^{\alpha_i}_n \notin U$ for every $i \le k$ and $n \in a\}$. 
Then the space $\beta \ka_a^V$ is the space $\beta \ka^V$
equipped with the topology generated by
the $\tilde W^a_{A, \vec{\alpha}}$'s.
As with $\Fine(\pkl)^V_a$, one can check that $\beta \ka_a^V$ is a regular $T_1$ space.


\begin{lemma}\label{5.7+}
Let $a=\{n<\om \mid \bigcup G(n)=0\}$ and
$b=\{n<\om \mid \bigcup G(n)=1\}$.
Then $\beta \ka^V_a$ and $\beta \ka^V_b$ are Lindel\"of in $V[G]$.
\end{lemma}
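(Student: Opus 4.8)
The plan is to mimic almost exactly the proof of Claim~\ref{4.3} inside Proposition~\ref{4.1}, since the space $\beta\ka^V_a$ is constructed in complete analogy with $\Fine(D)^V_a$. The only structural differences are that the underlying ambient compact space is $\beta\ka^V$ (rather than $\Fine(D)^V$) and that the $A^\alpha_n$ now partition $\ka$; neither difference affects the forcing argument. First I would prove that $\beta\ka^V_a$ is Lindel\"of, then note that the statement for $\beta\ka^V_b$ follows by swapping $0$ and $1$ in the generic, exactly as in the passage from Claim~\ref{4.3} to the claim immediately following it.

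For the Lindel\"of-ness of $\beta\ka^V_a$ I would work in $V$ with a name $\dot a$ for $a$, fix $p\in\bbC$ and a name $\dot\calO$ for an open cover of $\beta\ka^V_{\dot a}$, and reduce to the case where $p$ forces every member of $\dot\calO$ to have the basic form $\tilde W^{\dot a}_{A,\vec\alpha}$. Then I would take a countable elementary submodel $M\prec H(\theta)$ containing all relevant objects and show $p\Vdash$``$\{\tilde W^{\dot a}_{A,\vec\alpha}\in\dot\calO\mid\seq{A,\vec\alpha}\in M\}$ covers $\beta\ka^V_{\dot a}$''. To verify this, given $p_0\le p$ and $U_0\in\beta\ka^V$, I would collect into a family $\calV\in M$ the open sets $\tilde W^q_{A,\vec\alpha}$ obtained by reading off, from conditions $q\le p_0$ forcing $\tilde W^{\dot a}_{A,\vec\alpha}\in\dot\calO$, the coordinates where $q$ takes value $0$; the same computation as in Claim~\ref{4.3} shows $\calV$ is an open cover of the \emph{compact} space $\beta\ka^V$. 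Compactness then yields a finite subcover lying in $M$, and one of its members $\tilde W^q_{A,\vec\alpha}$ contains $U_0$.

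The final step is the key extension argument. Since each $\{A^{\alpha_i}_n\mid n<\om\}$ is a partition of $\ka$, for each $i\le k$ there is at most one $n$ with $A^{\alpha_i}_n\in U_0$; hence I can choose $n_0>\dom(q)$ past all such $n$, and extend $q$ to $r$ with $\dom(r)=n_0$ by setting $r(m)=1$ on the new coordinates. This $r$ forces $\dot a\cap n_0=\{n\in\dom(q)\mid q(n)=0\}$, so that $r\Vdash$``$U_0\in\tilde W^{\dot a}_{A,\vec\alpha}\in\dot\calO$'' with $\seq{A,\vec\alpha}\in M$, as required. I expect the main (and only genuine) obstacle to be confirming that $\beta\ka^V$ plays the role of $\Fine(D)^V$ without loss: specifically that $\beta\ka^V$ really is compact in $V[G]$ when equipped with the $V$-topology generated by the clopen sets $\{U\mid A\in U\}$ for $A\in V$, which holds because $\bbC$ is c.c.c.\ and the forcing adds no new subsets of $\ka$ relevant to this coarse topology, so compactness of $\beta\ka^V$ is absolute enough for the finite-subcover step to go through. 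Once that is in hand, the rest is a routine transcription of Claim~\ref{4.3}.
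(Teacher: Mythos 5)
Your first three paragraphs are exactly the paper's intended proof: the paper proves Lemma~\ref{5.7+} by stating that the argument of Claim~\ref{4.3} goes through verbatim with $\tilde W^a_{A,\vec{\alpha}}$ in place of $W^a_{A,\vec{\alpha}}$, and your transcription (the family $\calV\in M$ of sets $\tilde W^q_{A,\vec{\alpha}}$, the finite subcover inside $M$, the extension $r\le q$ with $r(m)=1$ on $[\dom(q),n_0)$, and the symmetric argument for $b$) reproduces that faithfully.

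Your final paragraph, however, raises as ``the main (and only genuine) obstacle'' a claim that is actually \emph{false}: in $V[G]$, the space $\beta\ka^V$ with the topology generated by the sets $\{U\mid A\in U\}$, $A\in\p(\ka)^V$, is \emph{not} compact. Cohen forcing certainly adds new subsets of $\om$ (hence of $\ka$), and it adds new ultrafilters on the Boolean algebra $\p(\ka)^V$: fix in $V$ an independent family $\{B_n\mid n<\om\}$ of subsets of $\ka$, and let $c$ be the Cohen real. Putting $C_n=B_n$ if $n\notin c$ and $C_n=\ka\setminus B_n$ if $n\in c$, the countable family $\{\{U\in\beta\ka^V\mid C_n\in U\}\mid n<\om\}$ is an open cover of $\beta\ka^V$ in $V[G]$ (a $V$-ultrafilter omitting every $C_n$ would contain $\{B_n\mid n\in c\}\cup\{\ka\setminus B_n\mid n\notin c\}$ and hence decode $c$, contradicting $c\notin V$), yet it has no finite subcover, since by independence $\bigcap_{n<N}(\ka\setminus C_n)\neq\emptyset$ and any principal ultrafilter at a point of this intersection avoids the first $N$ sets. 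Fortunately, nothing in the proof needs compactness in $V[G]$: the extraction of a finite subcover from $\calV$ takes place entirely in $V$, where you are reasoning about names and conditions, and there $\beta\ka$ is genuinely compact Hausdorff. So your proof is correct once the last paragraph is removed; the ``obstacle'' you identified is a non-issue, and the absoluteness argument you offered for it would not survive scrutiny.
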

\begin{proof}
The proof is the same as in Claim \ref{4.3};
just replace $W^a_{A, \vec{\alpha}}$ in the proof of Claim \ref{4.3}
by $\tilde W^a_{A, \vec{\alpha}}$.
\end{proof}

\begin{lemma}\label{5.8+}
Let $a=\{n<\om \mid \bigcup G(n)=0\}$ and
$b=\{n<\om \mid \bigcup G(n)=1\}$.
Then $\beta \ka^V_a \times \beta \ka^V_b$ has a
closed discrete subset of size $\ka$.
Hence the extent of the square of $\beta \ka^V_a \oplus \beta \ka^V_b$ is $\ge \ka$.
\end{lemma}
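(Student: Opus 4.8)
The plan is to exhibit an explicit closed discrete subset of size $\ka$ inside the product $\beta\ka^V_a \times \beta\ka^V_b$, namely the diagonal taken over the principal ultrafilters. Let $E$ be the set of principal ultrafilters over $\ka$, so $\size{E}=\ka$, and for $\xi<\ka$ write $U_\xi$ for the principal ultrafilter concentrated on $\xi$. Set $\Delta_E=\{\seq{U_\xi, U_\xi} \mid \xi<\ka\} \subseteq \beta\ka^V_a \times \beta\ka^V_b$; this has size $\ka$, and I claim it is both closed and discrete.

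Discreteness is immediate: since $\{\xi\} \in V$ holds only for $V=U_\xi$, the basic open set $\tilde W^a_{\{\xi\}, \emptyset} \times \tilde W^b_{\{\xi\}, \emptyset}$ equals the singleton $\{\seq{U_\xi, U_\xi}\}$, so every point of $\Delta_E$ is in fact isolated in the whole product.

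For closedness I would show the complement is open, splitting a point $\seq{U_0, U_1} \notin \Delta_E$ into two cases. If $U_0 \neq U_1$, pick $A \subseteq \ka$ in $V$ with $A \in U_0$ and $A \notin U_1$; then $\tilde W^a_{A, \emptyset} \times \tilde W^b_{\ka \setminus A, \emptyset}$ is a neighborhood of $\seq{U_0, U_1}$ meeting no point of the full diagonal, since no ultrafilter contains both $A$ and $\ka \setminus A$. The remaining, and main, case is a diagonal point $\seq{U, U}$ with $U$ non-principal. Here I would invoke the proper $G_\delta$-cover: under the hypothesis that $\ka$ carries no $\om_1$-complete non-principal ultrafilter, $U$ lies in $\ka^*$, so there is $\alpha<\mu$ with $U \in Z_\alpha$, i.e. $A^\alpha_n \notin U$ for all $n<\om$. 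Then $\seq{U,U} \in \tilde W^a_{\ka, \seq{\alpha}} \times \tilde W^b_{\ka, \seq{\alpha}}$, and I claim this neighborhood avoids $\Delta_E$: if $\seq{U_\xi, U_\xi}$ belonged to it, then $A^\alpha_n \notin U_\xi$ for every $n \in a$ and every $n \in b$, hence for every $n \in a \cup b = \om$; but $\{A^\alpha_n \mid n<\om\}$ partitions $\ka$, so $\xi \in A^\alpha_n$ for some $n$ and thus $A^\alpha_n \in U_\xi$, a contradiction. The crux is exactly this interaction, which relies on $\{A^\alpha_n\}$ being a partition together with the decomposition $a \cup b = \om$ forced by $\bbC$; the point is that neither $a$ nor $b$ alone exhausts $\om$, so a single coordinate could retain some principal ultrafilter, whereas the two coordinates together rule them all out.

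This shows $\Delta_E$ is closed discrete, so $e(\beta\ka^V_a \times \beta\ka^V_b) \ge \ka$. Finally, arguing as in Lemma \ref{lem4.1}, the product $\beta\ka^V_a \times \beta\ka^V_b$ is a clopen subspace of $(\beta\ka^V_a \oplus \beta\ka^V_b)^2$; a closed discrete set in a clopen subspace remains closed and discrete in the whole space, so the extent of the square of $\beta\ka^V_a \oplus \beta\ka^V_b$ is $\ge \ka$ as well.
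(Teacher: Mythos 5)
Your proof is correct and takes essentially the same approach as the paper's: the same diagonal $\{\seq{U_\xi,U_\xi} \mid \xi<\ka\}$ of principal ultrafilters, the same two-case closedness argument (separating distinct coordinates by a set $A \in V$ and its complement, and handling a non-principal diagonal point via a cover element $Z_\alpha$ and the neighborhood $\tilde W^a_{\ka,\seq{\alpha}} \times \tilde W^b_{\ka,\seq{\alpha}}$), with the $a \cup b = \om$ verification you spell out left implicit in the paper. Your only additions are cosmetic: observing that each diagonal point is isolated in the full product, and making explicit the clopen-subspace step for the topological sum.
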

\begin{proof}
For $\xi<\ka$,  let $U_\xi \in \beta \ka^V$ be the principal ultrafilter over $\ka$ (in $V$) with $\{\xi\} \in U_\xi$.
Let $\Delta=\{\seq{U_\xi ,U_\xi} \mid \xi<\ka\}$.
Clearly $\Delta$ is discrete in $\mu \ka^V_a \times \mu \ka^V_b$.

We see that $\Delta$ is closed.
Take $\seq{U, U'} \in
(\beta \ka_a^V \times \beta \ka^V_b) \setminus \Delta$.
If $U \neq U' $,
take $A \in U$ with $\ka \setminus A \in U'$.
Then $O=\{\seq{F,F'} \in \beta \ka^V_a \times \beta \ka^V_b \mid
A \in F, \ka \setminus A \in F'\}$
is an open neighborhood of $\seq{U,U'}$ in $\beta \ka_a^V \times \beta \ka_b^V$
with $O \cap \Delta=\emptyset$.
So suppose $U=U'$.
$U$ is non-principal, and we can take $\alpha<\mu$ with $U \in Z_\alpha$.
Then $\seq{U,U'} \in \tilde W^a_{\ka, \seq{\alpha}} \times \tilde W^b_{\ka, \seq{\alpha}}$,
and 
$(\tilde W^a_{\ka, \seq{\alpha}} \times \tilde W^b_{\ka, \seq{\alpha}}) \cap \Delta=\emptyset$.
\end{proof}

\begin{cor}
$\bbC$ forces the following:
For every uncountable cardinal $\ka$,
$\ka$ is the least measurable cardinal
if and only if
$\ka$ is the least cardinal such that  $e(X^2) \le \ka$
for every regular $T_1$Lindel\"of space $X$.
\end{cor}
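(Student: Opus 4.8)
The plan is to mirror the structure of Corollary \ref{4.6}, combining the ZFC upper bound from Lemma \ref{5.3+} with the consistency lower bound supplied by Lemmas \ref{5.7+} and \ref{5.8+}. Let me argue in $V[G]$ for a $(V,\bbC)$-generic $G$. For the easy direction, suppose $\ka$ is the least measurable cardinal in $V[G]$. Cohen forcing is c.c.c., hence preserves measurability both ways (the standard lifting argument, exactly as in Lemma \ref{4.5}), so $\ka$ is also the least measurable in $V$, and it remains measurable in $V[G]$. Then Lemma \ref{5.3+} applied to the two-element family $\{X,X\}$ gives $e(X^2)\le\ka$ for every regular $T_1$ Lindel\"of space $X$ (such spaces are Lindel\"of, and $X^2=\prod_{\xi<2}X_\xi$), establishing that $\ka$ bounds all these extents.

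For the reverse direction, I must show $\ka$ is in fact the \emph{least} cardinal with this bounding property, i.e. that for every uncountable $\nu<\ka$ there is a regular $T_1$ Lindel\"of space $X$ with $e(X^2)>\nu$. Fix such a $\nu$; since $\ka$ is least measurable, $\nu^+<\ka$ as well (measurables are limit cardinals, or at least $\nu^+$ is below the least measurable), so there is no $\om_1$-complete non-principal ultrafilter over $\nu^+$ in $V$, and by c.c.c. preservation none in $V[G]$ either. Working with $\la=\nu^+$ in place of $\ka$ in the construction preceding Lemma \ref{5.7+}, I obtain in $V[G]$ the spaces $\beta(\nu^+)^V_a$ and $\beta(\nu^+)^V_b$ for $a=\{n:\bigcup G(n)=0\}$, $b=\{n:\bigcup G(n)=1\}$. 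By Lemma \ref{5.7+} both are Lindel\"of (and regular $T_1$ by the remark in the construction), and by Lemma \ref{5.8+} their topological sum $Y=\beta(\nu^+)^V_a\oplus\beta(\nu^+)^V_b$ satisfies $e(Y^2)\ge\nu^+>\nu$. A topological sum of two regular $T_1$ Lindel\"of spaces is again regular $T_1$ Lindel\"of, so $Y$ is the required witness.

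Combining the two directions: the least measurable is an upper bound on $\sup\{e(X^2)\}$, and no smaller cardinal bounds all these extents, so the least measurable equals the least cardinal $\ka$ with $e(X^2)\le\ka$ for all regular $T_1$ Lindel\"of $X$. The one point needing care — and the main obstacle — is the precise passage from $\nu<\ka$ to a genuine closed discrete set of size $>\nu$ in a \emph{square}: Lemma \ref{5.8+} gives the closed discrete diagonal $\Delta$ of size $\nu^+$ inside $\beta(\nu^+)^V_a\times\beta(\nu^+)^V_b$, which sits as a clopen factor of $Y^2$, so $\Delta$ remains closed and discrete in $Y^2$ and hence $e(Y^2)\ge\nu^+$. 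I would also double-check that the constructed spaces are nontrivial regular $T_1$ Lindel\"of (not merely the remainder-based space but the full $\beta\ka^V_a$ including the principal ultrafilters), as is verified just before Lemma \ref{5.7+}; granting that, the argument closes cleanly.
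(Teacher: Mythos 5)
Your proposal is correct and follows essentially the same route as the paper's proof: the upper bound via Lemma \ref{5.3+}, L\'evy--Solovay-style preservation of measurability under the (countable, hence c.c.c.) Cohen forcing, and the witness space $\beta(\nu^+)^V_a \oplus \beta(\nu^+)^V_b$ supplied by Lemmas \ref{5.7+} and \ref{5.8+}. The only difference is presentational: the paper runs the lower bound as a contradiction, applying the construction to $\ka_1^+$ where $\ka_1$ is the least bounding cardinal, whereas you argue directly that every uncountable $\nu$ below the least measurable fails to bound; these are the same argument.
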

\begin{proof}
Take a $(V, \bbC)$-generic $G$, and work in $V[G]$.
Let $\ka_0$ be the least measurable cardinal,
and $\ka_1$ the least cardinal $\ka$
satisfying $e(X^2) \le \ka$
for every regular $T_1$ Lindel\"of space $X$.

By Lemma \ref{5.3+}, we have $\ka_1 \le \ka_0$.
If $\ka_1<\ka_0$,
then, in $V$, there is no measurable cardinal $\le \ka_1^+$.
Hence by Lemmas \ref{5.7+} and \ref{5.8+}
there is a regular $T_1$ Lindel\"of space $X$ such that
$e(X^2) \ge \ka_1^+$.
This contradicts  the definition of $\ka_1$,
and we have $\ka_1=\ka_0$.
\end{proof}

\begin{remark}
As the (weak) Lindel\"of degree, the square $X^2$ can be replaced by
any $X^{n+1}$,
that is, for every positive $n<\om$,
$\bbC$ forces the following:
For every uncountable cardinal $\ka$,
$\ka$ is the least measurable cardinal
if and only if
$\ka$ is the least cardinal such that $e(X^{n+1}) \le \ka$
for every regular $T_1$ Lindel\"of space $X$ with $X^n$ Lindel\"of.
\end{remark}

\begin{question}
In ZFC, is the least measurable cardinal
 the supremum of the extents of the squares of Lindel\"of spaces?
\end{question}

\noindent
{\bf Acknowledgments:}
The author would like to express the deepest appreciation to
the referee for many useful suggestions and improvements.
In particular the referee suggested the uses of $\mathrm{Fine}(\pkl)$ and $\mathbb{A}(X)$,
which simplified the original messy constructions.
The referee also pointed out Remark \ref{remark}.
This research was suppoted by JSPS KAKENHI Grant Nos. 18K03403 and 18K03404.

\printindex

\end{document}